\numberwithin{equation}{section}
\newtheorem{thm}{Theorem}[section]
\newtheorem{prp}[thm]{Proposition} 
\newtheorem{lmm}[thm]{Lemma} 
\newtheorem{cnj}[thm]{Conjecture} 
\newtheorem{ques}[thm]{Question} 
\newtheorem{crl}[thm]{Corollary} 
\theoremstyle{remark}
\newtheorem{rmk}[thm]{Remark} 
\def\sf#1{\textsf{#1}}
\def\BE#1{\begin{equation}\label{#1}}
\def\EE{\end{equation}}
\def\eref#1{(\ref{#1})}
\def\ti#1{\tilde{#1}}
\def\ov#1{\overline{#1}}
\def\lr#1{\langle{#1}\rangle}
\def\blr#1{\big\langle{#1}\big\rangle}
\def\wt#1{\widetilde{#1}}
\def\sm#1{\begin{small}#1\end{small}}
\def\lra{\longrightarrow}
\def\i{\infty}
\def\al{\alpha}
\def\be{\beta}
\def\de{\delta}
\def\ep{\epsilon}
\def\la{\lambda}
\def\om{\omega}
\def\th{\theta}
\def\Ga{\Gamma}
\def\C{\mathbb C}
\def\cC{\mathcal C}
\def\bE{\mathbb E}
\def\cM{\mathcal M}
\def\P{\mathbb P}
\def\Q{\mathbb Q}
\def\R{\mathbb R}
\def\Z{\mathbb Z}
\def\fM{\mathfrak M}
\def\d{\textnormal d}
\def\fI{\mathfrak i}
\def\h{\textnormal{h}}
\def\v{\textnormal{v}}
\def\ev{\textnormal{ev}}
\def\Im{\textnormal{Im}}
\def\Re{\textnormal{Re}\,}
\begin{document}

\title{Some Conjectures on\\ 
the Asymptotic Behavior of Gromov-Witten Invariants}
\author{Aleksey Zinger\thanks{Partially supported by NSF grants DMS 0846978 and 1500875}}
\date{\today}
\maketitle

\begin{abstract}
\noindent
The purpose of this note is to share some observations and speculations concerning
the asymptotic behavior of Gromov-Witten invariants.
They may be indicative of some deep phenomena
in symplectic topology that in full generality are outside of the reach of current techniques. 
On the other hand, many interesting cases can perhaps be treated via combinatorial techniques.
\end{abstract}

\tableofcontents

\section{Introduction}
\label{intro_sec}

\noindent
Gromov-Witten invariants are certain counts of curves in smooth projective varieties
and of pseudoholomorphic maps into symplectic manifolds inspired by~\cite{Gr}.
Their applications in symplectic topology have included 
Gromov's Non-Squeezing Theorem \cite[Theorem~9.3.1]{MS},
distinguishing diffeomorphic symplectic manifolds \cite{Ruan94},
and the uniruledness of symplectic manifolds with Hamiltonian group actions~\cite{Mc09}.
On the other hand, the vast literature on Gromov-Witten invariants in algebraic geometry
has generally concerned striking algebraic properties exhibited by collections
of these invariants associated with individual manifolds.
These properties have included the associativity of the quantum product 
on the cohomology, 
mirror symmetry in genus~0 \cite{Gi,LLY} and in genus~1 \cite{bcov1ci,bcov1},
and the modularity of some generating functions for Gromov-Witten invariants
\cite{BL,Tzeng}.\\

\noindent
The present note concerns three types of asymptotic behavior of Gromov-Witten invariants
as the degree of the curves and the energy of the maps being counted increase:
upper bounds, asymptotics involving upper and lower bounds, and vanishing statements.
While observations in this spirit were first made in the early days of 
Gromov-Witten theory \cite{FI,T95},
there has been fairly little progress on these kinds of problems since~then.
Developments in the theory over the past 25~years have reduced such problems
in many important cases to essentially combinatorial questions.
Recent works, some of which we review in the present note, have tackled these questions
in some cases and appear very promising for treating them in many other cases.
While combinatorial techniques should confirm many important special cases 
of the conjectures stated in Section~\ref{main_sec},
they are unlikely to explain the geometry behind the conjectured phenomena on their~own  though.
These phenomena may have connections with Strominger-Yau-Zaslow's deep proposal \cite{SYZ}
and Gross-Siebert's related program~\cite{GS03} suggesting that 
the Gromov-Witten invariants of (at least many) symplectic manifolds are
``made up" of the Gromov-Witten invariants of some toric pieces.\\

\noindent
A conjecture for rather precise asymptotics of the arbitrary-genus Gromov-Witten invariants
of the complex projective plane~$\P^2$ was formulated in~\cite{FI}.
In Section~\ref{asympt_subs}, we recall this conjecture,
summarize the recent work \cite{TW,AsympGWnotes} completing the proof
of its genus~0 case initiated in~\cite{FI} and moving on to the genus~1 case, 
and discuss extensions to other manifolds.
In Section~\ref{EBounds_subs}, we state a conjecture bounding
Gromov-Witten invariants by the energy of the underlying maps,
summarize an approach for establishing it in some cases,
and describe its connection with the proposal of~\cite{MaP}
for showing that natural generating functions for Gromov-Witten invariants
of many symplectic manifolds have nonzero radii of convergence.
In Section~\ref{vanish_subs}, we state a vanishing conjecture for 
the (descendant) Gromov-Witten invariants of monotone symplectic manifolds
(and smooth Fano varieties) and again summarize an approach for establishing it in some cases.
Section~\ref{details_sec} contains a detailed proof of the genus~0 and~1 
cases of the conjecture of~\cite{FI} stated in Section~\ref{asympt_subs};
it combines parts of~\cite{FI}, \cite{AsympGWnotes}, and~\cite{TW}.
We hope that this proof can be adapted to many other manifolds.
Section~\ref{misc_sec} contains related miscellaneous observations.\\

\noindent
The author would like to thank R.~Pandharipande for bringing up~\cite{FI} and 
for sharing his idea to establish the bound~\eref{CY3bnd_e},
G.~Tian for raising questions about asymptotics in Gromov-Witten theory,
P.~Sarnak and D.~Grigoriev for enlightening discussions,
A.~Gathmann for the {\it growi} program computing Gromov-Witten invariants,
and the IAS School of Mathematics for hospitality while 
\cite{g0ci} and \cite{AsympGWnotes} were completed.

\section{Conjectures and theorems}
\label{main_sec}

\noindent
The three parts of this section contain conjectures concerning asymptotic behavior
of Gromov-Witten invariants of three different flavors.
The first conjecture is particularly simple to state,
as it concerns only curve counts in the complex projective plane~$\P^2$.
The last two conjectures require a nominal amount of notation.
If true, they should be consequences of fundamental properties of Gromov-Witten invariants
that are yet to be discovered.

\subsection{Asymptotic expansions}
\label{asympt_subs}

\noindent
In the case of the complex projective plane~$\P^2$,
the classical enumerative counts of complex curves and 
the modern Gromov-Witten invariants agree.
For $g\!\in\!\Z^{\ge0}$ and $d\!\in\!\Z^+$, we denote by~$N_{g,d}$ 
the number of degree $d$ genus~$g$ curves through $3d\!-\!1\!+\!g$ general points in~$\P^2$.

\begin{cnj}[{\cite[Footnote 2]{FI}}]\label{P2_cnj}
There exist $b\!\in\!\R^+$ independent of $g$ and $a_g\!\in\!\R^+$ for each $g\!\in\!\Z^{\ge0}$
such~that 
$$\frac{N_{g,d}}{(3d\!-\!1\!+\!g)!}=a_gb^dd^{-1-\frac52(1-g)}\big(1+o(1)\big)
\qquad\hbox{as}\quad d\lra\i$$
for each $g\!\in\!\Z^{\ge0}$.
\end{cnj}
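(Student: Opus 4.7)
The plan is to exploit recursive structures among the numbers $N_{g,d}$ and extract the claimed polynomial-corrected exponential asymptotics by a normalize-and-bootstrap argument. I would treat the two cases indicated in the introduction ($g=0$ and $g=1$) in succession, using the genus~$0$ output as input for genus~$1$.

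The starting point for the genus~$0$ case is Kontsevich's recursion, which expresses $N_{0,d}$ as a quadratic convolution
\[
N_{0,d}=\sum_{\substack{d_1+d_2=d\\d_1,d_2\ge1}}N_{0,d_1}N_{0,d_2}\!\left[d_1^2d_2^2\binom{3d-4}{3d_1-2}-d_1^3d_2\binom{3d-4}{3d_1-1}\right].
\]
Introducing the normalized sequence $\alpha_d:=N_{0,d}/(3d-1)!$, I would rewrite this in the form $\alpha_d=\sum_{d_1+d_2=d}\alpha_{d_1}\alpha_{d_2}K(d_1,d_2)$, where the kernel $K(d_1,d_2)$ is an explicit ratio of factorials whose behavior for bounded $d_1$ as $d_2\to\infty$ is controlled by Stirling's formula. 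The ansatz $\alpha_d\sim a_0b^dd^{-7/2}$ is then self-consistent: the dominant contributions in the convolution come from the two symmetric ``boundary layers'' in which $d_1$ or $d_2$ stays bounded, and matching the $b^dd^{-7/2}$ tails pins down $b$ implicitly as the unique positive root of an equation of the form $1=2\sum_{k\ge1}\alpha_k\kappa(k)b^{-k}$, with $a_0$ forced by the same matching.

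The main technical work, and the principal obstacle, is to promote this heuristic into a rigorous two-sided asymptotic. My plan is a standard bootstrap. First, establish an upper bound $\alpha_d\le Cb^dd^{-7/2}$ by induction on $d$, using the convolution structure and Stirling estimates to absorb the sum into a constant slightly larger than the left-hand side. Second, deduce a matching lower bound $\alpha_d\ge cb^dd^{-7/2}$ by restricting the recursion to its small-$d_1$ window and using positivity of the early $\alpha_k$. Third, upgrade two-sided bounds to genuine convergence of the ratio $\alpha_d/(b^dd^{-7/2})$ to a positive limit $a_0$; this requires a uniform-integrability/dominated-convergence argument that lets one pass $d\to\infty$ inside the convolution. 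The third step is delicate, and is exactly where the original argument of \cite{FI} left a gap later closed in \cite{TW,AsympGWnotes}.

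For the genus~$1$ case, I would replace Kontsevich's recursion by a recursion expressing $N_{1,d}$ linearly in terms of the $N_{0,d'}$ and quadratically in terms of $N_{1,d'}$ with $d'<d$, of the type provided by the Getzler relation or by the genus~$1$ topological recursion relations on $\overline{\mathcal M}_{1,n}$. Normalizing to $\alpha_d^{(1)}:=N_{1,d}/(3d)!$ and feeding in the already-established genus~$0$ asymptotics, I would verify that the ansatz $\alpha_d^{(1)}\sim a_1b^dd^{-1}$ (the $g=1$ case of the formula, since $-1-\tfrac52(1-g)=-1$) is stable under the recursion, with the \emph{same} $b$ necessarily appearing because the quadratic genus~$1$ piece forces inheritance of the genus~$0$ growth rate. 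The expected hard step here is controlling the genus~$0$ source contribution, which a priori could swamp the desired pure $d^{-1}$ polynomial correction; ruling this out requires quantitative cancellation inside the Getzler-type relation, not just the crude asymptotic sizes of the individual terms.
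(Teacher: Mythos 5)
Your genus-0 heuristic does identify the correct matching condition --- summing the boundary layers of the convolution against the limiting kernel $d_1(3d_1\!-\!2)/54$ reproduces exactly $3F_0''(x_0)-2F_0'(x_0)=9$, which is~\eref{F0bound_e3} --- but the bootstrap you build on it cannot close. The scheme is \emph{critical} rather than supercritical: when you substitute the ansatz $\alpha_d\sim a_0b^dd^{-7/2}$ into the recursion, the constant $a_0$ cancels from both sides of the leading-order balance, so the matching determines $b$ but leaves $a_0$ (and indeed the exponent $-7/2$ itself) completely undetermined; your claim that $a_0$ is ``forced by the same matching'' is false. Moreover, in this critical regime the bulk of the convolution (both $d_i\!\to\!\i$) contributes at the same order $b^dd^{-7/2}$ as the boundary layers, so there is no clean separation to induct on, and the proposed inductive upper bound would require already knowing the exact value of~$b$, which is defined only implicitly by the marginal condition. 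This is precisely why the elementary real-variable treatment of the recursion in Corollary~\ref{FIbnd_crl} yields only two-sided bounds with \emph{different} bases, $(1/27)^d$ and $(4/15)^d$.

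Your step three (``dominated convergence in the convolution'') has no content: even granting matching two-sided bounds, passing to the limit in the recursion returns only the degenerate identity above and cannot prove that $\alpha_d/(b^dd^{-7/2})$ converges. The paper's own counterexample makes this concrete: the sequence $n_{0,d}'$ equal to $n_{0,d/2}$ for even $d$ and to $0$ for odd $d$ has the same local singular behavior at the dominant point but violates the asymptotics, so any correct argument must rule out additional singularities on the circle of convergence (Lemma~\ref{TW_lmm0}) --- an issue your proposal never touches. What actually determines $a_0$ is the coefficient $a_5$ of $z^{5/2}$ in the Puiseux expansion of the generating function at its singularity (Proposition~\ref{FIexp_prp}, itself the hard step, proved via a Frobenius-type construction and an ODE uniqueness argument), transferred to coefficient asymptotics by the contour-integral Lemma~\ref{TW_lmm}; there is no known way to extract this by induction on the recursion. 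Your genus-1 outline has roughly the right shape, but note that the genus-0 source term is not a nuisance to be cancelled: dividing the Eguchi--Hori--Xiong relation~\eref{F1cond_e} by the factor $9+2F_0'-3F_0''$, which vanishes like $z^{1/2}$, is exactly what produces the simple pole $-1/(48z)$ and hence the leading term $\tfrac{1}{48d}$, i.e.\ the conjectured exponent $-1$. Finally, keep in mind that both your proposal and the paper address only $g=0,1$; the statement for $g\ge2$ remains an open conjecture.
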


\noindent
The $g\!=\!0$ case of Conjecture~\ref{P2_cnj} is \cite[Proposition 3]{FI}.
The starting point for the reasoning behind \cite[Proposition 3]{FI}
is Kontsevich's recursion for~$N_{0,d}$, restated as~\eref{P2g0dfn_e} in the present note.
It is claimed in~\cite{FI} that the $g\!=\!0$ case of Conjecture~\ref{P2_cnj} 
is a direct consequence of 
the statement of Proposition~\ref{FIexp_prp} in the present note.
As noted in~\cite{AsympGWnotes},
\begin{enumerate}[label=(G\arabic*),leftmargin=*]

\item\label{F0exp_it} the existence of the expansion~\eref{F0exp_e} used
to establish the conclusion of Proposition~\ref{FIexp_prp} was not justified in~\cite{FI};

\item\label{F0impl_it} Proposition~\ref{FIexp_prp} does not directly imply the $g\!=\!0$ case of
Conjecture~\ref{P2_cnj} or even the first statement of
Corollary~\ref{TWexp_crl} below;

\end{enumerate}
see Section~\ref{FI_subs} for details.
Following P.~Sarnak's suggestion to use the Frobenius method to bypass~\ref{F0exp_it},
\cite{AsympGWnotes} established Proposition~\ref{FIexp_prp} and suggested 
the use of the Eguchi-Hori-Xiong recursion \cite[(8)]{Pand99} to move from
the genus~0 case of Conjecture~\ref{P2_cnj} to the genus~1 case.
Lemmas~\ref{TW_lmm0} and~\ref{TW_lmm} in the present note
were first obtained (in slightly different formulations)  
in~\cite{TW} with the knowledge of the contents of~\cite{AsympGWnotes}
and completed the proof of the genus~0 and~1 cases of Conjecture~\ref{P2_cnj},
with even more refined statements for the asymptotics of~$N_{0,d}$ and~$N_{1,d}$.

\begin{thm}[{\cite[Theorems~1.1,1.2]{TW}}]\label{TW_thm}
There exist $b\!\in\!\R^+$, 
$a_{0;k},a_{1;k}\!\in\!\R$ for each $k\!\in\!\Z^{\ge0}$, and 
$C_N\!\in\!\R$ for each $N\!\in\!\Z^+$ such that
\begin{equation*}\begin{split}
\bigg|\frac{N_{0,d}}{(3d\!-\!1)!}
-b^d\!\sum_{k=3}^{N-1}\!a_{0;k}d^{-k-\frac12}\bigg|&<C_Nb^dd^{-N-\frac12}\,,\\
\bigg|\frac{N_{1,d}}{(3d)!}-b^d
\bigg(\frac{1}{48d}+\sum_{k=0}^{N-1}\!a_{1;k}d^{-k-\frac12}\!\!\bigg)\bigg|&<C_Nb^dd^{-N-\frac12}
\end{split}\end{equation*}
for all $d,N\!\in\!\Z^+$.
\end{thm}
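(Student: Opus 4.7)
The plan is to extract the stated asymptotics via singularity analysis of the generating functions
$$F_0(x)=\sum_{d\ge1}\frac{N_{0,d}}{(3d-1)!}x^d,\qquad F_1(x)=\sum_{d\ge1}\frac{N_{1,d}}{(3d)!}x^d,$$
near their leading real singularity $x_0=1/b>0$, using Kontsevich's recursion \eref{P2g0dfn_e} for $F_0$ and the Eguchi--Hori--Xiong recursion \cite[(8)]{Pand99} for $F_1$.

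First I would convert Kontsevich's recursion into a nonlinear ODE for $F_0$, as in Proposition~\ref{FIexp_prp}. A positivity/growth argument then identifies the radius of convergence as $1/b$ for some $b\in\R^+$. Following Sarnak's suggestion, I would apply the Frobenius method to this ODE at the regular singular point $x_0$: the indicial equation is expected to have a root~$5/2$, producing a formal solution
$$F_0(x)=A(x)+(1-bx)^{5/2}\sum_{k\ge0}\alpha_k(1-bx)^k,$$
with $A(x)$ analytic at $x_0$. Lemma~\ref{TW_lmm0} is precisely the statement that this formal solution matches the actual $F_0$ in a slit neighborhood of $x_0$ with uniform remainders. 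Granting it, the classical transfer theorem of Flajolet--Odlyzko converts each $(1-bx)^{k+5/2}$ into a contribution of size $b^d d^{-k-7/2}$ to the coefficients, producing the genus $0$ expansion starting at $k=3$ with the claimed uniform tail bound $C_Nb^dd^{-N-1/2}$.

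For the genus $1$ case, I would apply the Eguchi--Hori--Xiong recursion, which, as recorded by Lemma~\ref{TW_lmm}, expresses $F_1$ as a rational combination of $F_0$ and its derivatives plus a logarithmic correction. Substituting the Frobenius expansion of $F_0$ then yields a local expansion of $F_1$ near $x_0$ of the form
$$F_1(x)=B(x)+c\log(1-bx)+(1-bx)^{1/2}\sum_{k\ge0}\beta_k(1-bx)^k.$$
The log term transfers to a coefficient of size $b^d/d$ with explicit constant; evaluating $c$ from the leading Frobenius coefficient of $F_0$ produces the precise $1/(48d)$ main term, while the half-integer powers contribute the series $\sum_k a_{1;k}d^{-k-1/2}$ with matching uniform tail bound.

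The principal obstacle is securing Lemma~\ref{TW_lmm0}: matching $F_0$ to its formal Frobenius expansion to \emph{all} orders, not merely at leading order. This requires (i) showing $x_0$ is the unique singularity of $F_0$ on its circle of convergence, (ii) extending $F_0$ analytically into a sector slit along the real axis at $x_0$, and (iii) controlling the remainders along contours suitable for singularity analysis. This is the step where the nonlinearity of the Kontsevich-derived ODE interacts nontrivially with the global behavior of $F_0$, and it is what observations \ref{F0exp_it}--\ref{F0impl_it} identify as the gap in \cite{FI}. Once these analytic inputs are in place, the coefficient extraction for $F_0$ and the reduction of the genus $1$ case via Lemma~\ref{TW_lmm} are largely mechanical applications of singularity analysis.
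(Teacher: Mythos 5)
Your outline follows the paper's proof essentially step for step: the ODE \eref{F0cond_e} from Kontsevich's recursion, the half-integer Frobenius-type expansion at the dominant singularity, the absence of other singularities on the critical circle, a transfer step converting the local expansion into coefficient asymptotics with uniform remainders (which the paper proves directly by contour integration in Lemma~\ref{TW_lmm} rather than citing Flajolet--Odlyzko), and the reduction of genus~1 to genus~0 via the Eguchi--Hori--Xiong relation~\eref{F1cond_e}. Note only that you have swapped some labels --- the all-orders matching of the formal expansion to the actual $F_0$ is Proposition~\ref{FIexp_prp} (proved in Section~\ref{expand_subs} by a two-parameter Frobenius family plus a comparison/uniqueness argument, the hardest part of the paper), not Lemma~\ref{TW_lmm0}, which is the no-other-singularities statement, and the genus-1 relation is \eref{F1cond_e}, not Lemma~\ref{TW_lmm} --- and that the steps you correctly flag as the principal obstacles are exactly the ones your proposal leaves unproved.
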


\begin{crl}[{\cite[Corollaries~1.1,1.2]{TW}}]\label{TWexp_crl}
There exists $b\!\in\!\R^+$ such~that 
\BE{TWexpcrl_e}\lim_{d\lra\i}\sqrt[d]{\frac{N_{0,d}}{(3d\!-\!1)!}}=b=
\lim_{d\lra\i}\sqrt[d]{\frac{N_{1,d}}{(3d)!}}\,;\EE
in particular, the above limits exist and are nonzero.
\end{crl}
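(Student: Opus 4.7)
The plan is to deduce both limits directly from the asymptotic expansions in Theorem~\ref{TW_thm} by taking $d$-th roots. The essential observation is purely analytic: whenever $c_d>0$ satisfies $\alpha b^d d^{-s}\le c_d\le \beta b^d d^{-s}$ for some fixed constants $0<\alpha\le\beta$ and $s\in\R$ and all sufficiently large~$d$, one has $\sqrt[d]{c_d}\to b$, since $\sqrt[d]{\alpha},\sqrt[d]{\beta}\to 1$ and $d^{-s/d}=e^{-s(\log d)/d}\to 1$. Thus the task reduces to sandwiching $N_{g,d}/(3d\!-\!1\!+\!g)!$ between constant multiples of $b^d d^{-s_g}$ for some $s_g$.

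The upper bound is the easy direction. Fix any $N$ in Theorem~\ref{TW_thm} (say $N=4$ for genus~0 and $N=1$ for genus~1). The finite sum of explicit leading terms is bounded in absolute value by a constant (each summand $|a_{g;k}|d^{-k-1/2}$ is bounded for $d\ge 1$), and the error term is at most $C_N b^d d^{-N-1/2}=O(b^d)$; together this gives $N_{g,d}/(3d\!-\!1\!+\!g)!\le M_g\,b^d$ for a constant~$M_g$, so $\sqrt[d]{N_{g,d}/(3d\!-\!1\!+\!g)!}\le M_g^{1/d}\,b\to b$.

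For the matching lower bound, the genus~1 case is cleanly handled by the explicit $\frac{1}{48d}$ term. Applying Theorem~\ref{TW_thm} with $N=1$, the dominant contribution inside the parentheses is $a_{1;0}d^{-1/2}$; since $N_{1,d}>0$, positivity forces $a_{1;0}\ge 0$ (a negative value would eventually make the right-hand side negative). If $a_{1;0}>0$, this dominant term gives $N_{1,d}/(3d)!\ge (a_{1;0}/2)b^d d^{-1/2}$ for $d$ large; if $a_{1;0}=0$, the term $\frac{b^d}{48d}$ survives and dominates the error, yielding $N_{1,d}/(3d)!\ge b^d/(96d)$ for $d$ large. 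Either way, taking $d$-th roots gives $\liminf\ge b$.

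For genus~0 the same scheme applies: let $k_0\ge 3$ be the smallest index with $a_{0;k_0}\ne 0$ and apply Theorem~\ref{TW_thm} with $N=k_0+1$; positivity of $N_{0,d}$ forces $a_{0;k_0}>0$, and then $N_{0,d}/(3d\!-\!1)!\ge (a_{0;k_0}/2)b^d d^{-k_0-1/2}$ for large $d$, giving $\liminf\ge b$ as before. The one point that needs attention, and what I expect to be the main obstacle, is ruling out the degenerate possibility that \emph{all} $a_{0;k}$ vanish, in which case Theorem~\ref{TW_thm} would supply no polynomial lower bound at all. The natural way to dispense with this is to inspect the explicit leading coefficient $a_{0;3}$ constructed in the proofs of \cite{FI,AsympGWnotes,TW} and verify it is nonzero (hence positive by the positivity of $N_{0,d}$); alternatively, a crude lower bound of the form $N_{0,d}\ge c\,b^d/d^M$ coming from Kontsevich's recursion would suffice to exclude the degeneracy and complete the argument.
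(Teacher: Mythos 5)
Your argument is correct and is essentially the paper's: the paper treats the corollary as an immediate consequence of Theorem~\ref{TW_thm}, and you are simply filling in the root-extraction details. The one point you leave open --- ruling out that all $a_{0;k}$ vanish --- is genuine if one only has the bare statement of Theorem~\ref{TW_thm}, and your first proposed fix is the one that works: by Proposition~\ref{FIexp_prp} the coefficient $a_5$ in \eref{FIexp_e} lies in $\fI\R^-$ (its nonvanishing is forced by the strict inequality in the second line of \eref{F0bound_e}, via \eref{arec_e1}), and Lemma~\ref{TW_lmm} identifies $a_{0;3}$ as a nonzero multiple of $a_5$, so the leading genus-$0$ coefficient is indeed nonzero. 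Your fallback option, by contrast, would not close the gap as stated: the crude lower bound \eref{P2asymp_e0} from Kontsevich's recursion has base $\frac{1}{27}$, which is only known to satisfy $\frac{1}{27}\le b$, so it cannot exclude the scenario in which $n_{0,d}$ decays faster than every $b^dd^{-N}$ while still having $\limsup_d\sqrt[d]{n_{0,d}}=b$. Finally, note the paper offers a slightly different route for genus~$1$: rather than extracting the $\frac{1}{48d}$ term as you do (which is fine and self-contained), it observes at the end of Section~\ref{FI_subs} that the second equality in \eref{TWexpcrl_e} follows from the first directly via the recursion \eref{P2g1dfn_e}.
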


\noindent 
The Eguchi-Hori-Xiong recursion, restated as~\eref{P2g1dfn_e} in the present note, 
determines the numbers~$N_{1,d}$ from the numbers~$N_{0,d}$.
As noted in~\cite{AsympGWnotes},
the second equality in~\eref{TWexpcrl_e} is a direct consequence of the first
because of this recursion; see the end of Section~\ref{FI_subs}.
This observation motivated the transition from the $g\!=\!0$ to the $g\!=\!1$ 
case in~\cite{TW}; see Corollary~\ref{F1_crl} and Lemma~\ref{TW_lmm0} in the present note.\\

\noindent
Corollary~\ref{TWexp_crl} is an immediate consequence of Theorem~\ref{TW_thm}.
We give a detailed proof of Theorem~\ref{TW_thm} in Section~\ref{details_sec}
to encourage others to consider similar asymptotics and energy bounds problems.
Section~\ref{FI0_subs} presents the approach of~\cite{FI} for bounding recursively
defined sequences by essentially geometric sequences.
Section~\ref{FI_subs} describes the remainder of the reasoning of~\cite{FI}
behind the claim to establish the genus~0 case of Conjecture~\ref{P2_cnj}
and discusses the gaps~\ref{F0exp_it} and~\ref{F0impl_it}.
Section~\ref{TW_subs} is a more systematic, though mathematically analogous, version 
of the treatment of~\ref{F0impl_it} provided by~\cite{TW}.
Section~\ref{expand_subs} contains a proof of Proposition~\ref{FIexp_prp}, 
which addresses~\ref{F0exp_it};
it is essentially the same argument as in~\cite{AsympGWnotes}, but is better organized.\\

\noindent
By the proof of Theorem~\ref{TW_thm}, the asymptotics for~$N_{0,d}$ and $N_{1,d}$ 
are completely determined by~$b$ in Conjecture~\ref{P2_cnj} and 
by the coefficients~$a_0$ and~$a_2$ in~\eref{FIexp_e}.
The remaining coefficients in~\eref{FIexp_e} are determined by~\eref{arec_e1} and~\eref{arec_e2}
and in turn determine the asymptotic coefficients~$a_{0;k}$ and~$a_{1;k}$ in 
Theorem~\ref{TW_thm} via Lemma~\ref{TW_lmm} and~\eref{F1cond_e}.
It would be interesting to determine these coefficients explicitly
and to understand their geometric meaning.
The leading coefficient in the genus~1 asymptotic expansion, 
i.e.~$a_1$ in Conjecture~\ref{P2_cnj}, is half the first Chern class of
the Hodge line bundle~$\bE_1$ on the Deligne-Mumford moduli space~$\ov\cM_{1,1}$ of elliptic curves.
This suggests a potential connection between the asymptotics conjectured in~\cite{FI}
in the early days of Gromov-Witten theory and the behavior of Gromov-Witten invariants 
under degenerations of the~target.\\

\noindent
The counts of genus~0 curves in many standard K\"ahler manifolds agree with
the corresponding Gromov-Witten invariants and can be computed recursively.
Asymptotics for the growth of such counts for blowups of~$\P^2$ are obtained
in \cite{IKS04,IKS05}, but these asymptotics are fairly coarse.
The~$\P^2$ case of these asymptotics is essentially equivalent
to the existence of
the lower and upper bounds in~\eref{P2asymp_e0}, which were originally obtained in~\cite{FI}.
On the other hand, the recursions determining the counts of genus~0 curves in blowups of~$\P^2$
have the same general structure as Kontsevich's recursion \cite[(10.4)]{RT} for~$\P^2$.
As shown in~\cite{Pand99}, the Eguchi-Hori-Xiong recursion for counts of genus~1 curves in~$\P^2$
is essentially a lift of Getzler's relation \cite{Getz} from $\ov\cM_{1,4}$ 
to the moduli space of genus~1 stable maps to~$\P^2$.
It thus has analogues for other K\"ahler and more generally symplectic manifolds. 
This all suggests that the approach described in Section~\ref{details_sec} may extend from~$\P^2$ 
to obtain refined asymptotics for counts of genus~0 and~1 curves in its blowups.\\

\noindent
A completely different approach is needed to deal with the $g\!\ge\!2$ cases of 
Conjecture~\ref{P2_cnj} and its potential analogues for other K\"ahler surfaces.
Some kind of geometric degeneration approach remains elusive at this point.
On the other hand, the G\"ottsche-Yau-Zaslow formula \cite{Tzeng} essentially
enumerates arbitrary-genus curves on smooth algebraic surfaces and appears promising
as the starting point for a combinatorial approach to the $g\!\ge\!2$ cases of
Conjecture~\ref{P2_cnj} and its potential generalizations.\\

\noindent
For complex manifolds of dimension~3 and higher, different types of incidence conditions 
(not just points) should be considered.
All counts of genus~0 curves in~$\P^n$ are computable from the recursion of 
\cite[Theorem~10.4]{RT}.
For $\P^3$, {\it Mathematica} suggests Conjecture~\ref{P3g0_cnj} below;
it is based on the numbers up to $d\!=\!200$ 
(the computation of these numbers already takes a long time).
As the convergence appears to be very slow
(for $N_{0,d}$, it is still going noticeably even for $d\!=\!1000$),
it is feasible that the limit below is even independent of the slope~$\al/\be$ chosen, 
but the numbers so far do not suggest this.

\begin{cnj}\label{P3g0_cnj}
Let $N_{0,d}(p)$ be the number of degree~$d$ rational curves
through $2d\!-\!p$ points and $2p$ lines in general position in~$\P^3$. 
For all $\al,\be\!\in\!\Z^+$ fixed, the~limit
$$\lim_{d\lra\i}\sqrt[d]{\frac{N_{0,\al d}(\be d)}{((2\al\!+\!\be)d)!}}$$
exists and is nonzero.
\end{cnj}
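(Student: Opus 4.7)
The plan is to adapt the framework of Sections~\ref{FI0_subs} and~\ref{TW_subs} for~$\P^2$ to the $\P^3$ setting. Throughout, fix $\alpha,\beta\!\in\!\Z^+$ with $\beta\!\le\!2\alpha$ (so that the counts $N_{0,\alpha d}(\beta d)$ are generically nontrivial) and write
\[a_d\ :=\ \frac{N_{0,\alpha d}(\beta d)}{\big((2\alpha+\beta)d\big)!}.\]
The starting point is the WDVV-type recursion of~\cite[Theorem~10.4]{RT} for~$\P^3$, which expresses $N_{0,d}(p)$ as a finite quadratic combination of the $N_{0,d'}(p')$ with $d'\!<\!d$; the coefficients come from distributing the point and line conditions between the two components of a reducible nodal degeneration, together with combinatorial factors counting the intersection class in~$\P^3$. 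Specializing this recursion to $(d,p)\!=\!(\alpha t,\beta t)$ yields a self-referential recursion for~$a_t$.\\

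\noindent
\emph{Step 1 (upper bound).} Adapting the argument of Section~\ref{FI0_subs}, I would prove by induction on~$d$ the existence of constants $B\!=\!B(\alpha,\beta)$, $C\!=\!C(\alpha,\beta)\!>\!0$ with $a_d\!\le\!CB^d$ for all $d\!\ge\!1$. The multinomial coefficients in the recursion distributing the $(2\alpha\!+\!\beta)d$ marked points between the two pieces cancel against the factorial normalization, leaving a sum that can be dominated by~$CB^d$ once $B$ is chosen large enough to absorb the degree-dependent weights and $C$ is set to absorb the initial values. This is the $\P^3$ analogue of~\cite[Proposition~4]{FI}.\\

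\noindent
\emph{Step 2 (lower bound via super-multiplicativity).} To obtain existence and nonvanishing of~$\lim a_d^{1/d}$, the plan is to establish an estimate of the form
\[a_{d_1+d_2}\ \ge\ c\cdot a_{d_1}\cdot a_{d_2}\qquad\forall\,d_1,d_2\!\ge\!1,\]
for some constant $c\!=\!c(\alpha,\beta)\!>\!0$. Such an estimate should arise by retaining only the \emph{slope-preserving} terms in the recursion---those with $d_1'\!=\!\alpha d_1$, $d_2'\!=\!\alpha d_2$, $p_1\!=\!\beta d_1$, $p_2\!=\!\beta d_2$---and noting that the multinomial coefficient $\binom{(2\alpha+\beta)(d_1+d_2)}{(2\alpha+\beta)d_1}$ appearing in the weight of this term precisely reassembles the factorials to yield $a_{d_1}a_{d_2}$ up to a bounded factor. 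Applying Fekete's lemma to $\log(c\cdot a_d)$, and combining with the upper bound from Step~1, gives existence and finiteness of $\lim a_d^{1/d}$; positivity of $a_{d_0}$ for some $d_0$ (obtained via a direct geometric construction of a rational curve meeting generic incidence conditions) guarantees the limit is nonzero.\\

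\noindent
\emph{Main obstacle.} The hard part is Step~2, and specifically producing a clean super-multiplicativity inequality. Unlike in the $\P^2$ case, the $\P^3$ recursion mixes two distinct insertion classes (points and lines), and there is no a~priori reason that a single term in the recursion assembles into the desired inequality; one may need to combine several terms, or to bypass the recursion altogether and run a direct symplectic-gluing argument that degenerates a smooth rational curve of slope $(\alpha,\beta)$ into a nodal curve whose two components separately meet the corresponding splits of the incidence conditions. Moreover, ``off-slope'' contributions to the recursion could in principle dominate, forcing a two-parameter analysis in both $d$ and~$p$ rather than a ray-restricted one. Addressing this is the primary technical difficulty, and is arguably the reason the conjecture remains open even in this toric setting.
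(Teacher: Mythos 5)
The statement you are trying to prove is stated in the paper as a \emph{conjecture}, and the paper offers no proof of it: Section~\ref{P3g0_subs} establishes only the upper bound $n_{0,d}(p)\le 2^{8d}d^{-4}$, and the text immediately following the conjecture states explicitly that ``a lower bound appears more elusive, since the recursion of \cite[Theorem~10.4]{RT} for $\P^3$ involves negative coefficients.'' Your Step~1 is essentially the paper's argument (a two-variable version of the \cite{FI} scheme of Section~\ref{FI0_subs}, using the bound $|f(d_1,d_2,p_1,p_2)|\le 8d_1^3d_2^3/d^3$ and a dominating recursively defined sequence), and it is sound because the upper bound only needs absolute values of the recursion coefficients.

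The genuine gap is in Step~2, and it is more specific than the ``main obstacle'' you describe. Your super-multiplicativity inequality is obtained by ``retaining only the slope-preserving terms'' of the recursion, i.e.\ by discarding all other terms and keeping one. This is only legitimate if the discarded terms are nonnegative. For $\P^2$ that is the case, which is exactly why the \cite{FI} scheme yields the two-sided bound~\eref{P2asymp_e0}; but for $\P^3$ the coefficients
$$f(d_1,d_2,p_1,p_2)
=\frac{(2d_1\!+\!p_1)!(2d_2\!+\!p_2)!}{(2d\!+\!p)!}
d_2\binom{2d\!-\!p\!-\!1}{2d_1\!-\!p_1\!-\!1}
\Bigg(d_1^2\binom{2p\!-\!2}{2p_1}-d_2^2\binom{2p\!-\!2}{2p_2}\Bigg)$$
change sign (the last factor is a difference), as do the coefficients in~\eref{P3bnd_e5a}. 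So neither the single retained term nor the full sum minus it has a definite sign, and the proposed inequality $a_{d_1+d_2}\ge c\,a_{d_1}a_{d_2}$ does not follow; Fekete's lemma then has nothing to act on. You do flag that ``off-slope contributions could dominate,'' but the failure is already at the level of signs, not magnitudes, and it is precisely the reason the paper leaves the lower bound (and hence the conjecture) open. Any successful Step~2 would need either a sign analysis or cancellation structure in the $\P^3$ WDVV recursion, or a genuinely different (e.g.\ geometric/degeneration) source of lower bounds.
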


\noindent
An upper bound on the sequences in Conjecture~\ref{P3g0_cnj} can be obtained
from a two-variable version of the approach used in the proof of 
\cite[Proposition 3]{FI}; see Section~\ref{P3g0_subs}.
It also follows immediately from \cite[Theorem~1]{g0ci}.
A lower bound appears more elusive, since 
the recursion of \cite[Theorem~10.4]{RT} for $\P^3$ involves 
negative coefficients.\\

\noindent
A natural extension of Conjecture~\ref{P3g0_cnj} to arbitrary genera~$g$ and 
arbitrary compact symplectic manifolds~$(X,\om)$ would be as follows.
Fix $H_1,\ldots,H_k\!\in\!H^*(X)$.
Let $\be_r\!\in\!H_2(X)$ be a sequence such that $\lr{\om,\be_r}\!\lra\!\i$
and the lines $\R\be_r$ converge in the projectivization of~$H_2(X;\R)$.
Suppose $b_{1;r},\ldots,b_{k;r}\!\in\!\Z^+$ are sequences such that 
the lines $[b_{1;r},\ldots,b_{k;r}]$ converge in $\R\P^{k-1}$ and
$$\sum_{i=1}^kb_{i;r}\big(\!\deg H_i\big)=2\blr{c_1(TX),\be_r}
+\big(\dim_{\R}X\!-\!6\big)(1\!-\!g)+2k
\qquad\forall\,r\!\in\!\Z^+\,.$$
One might then ask whether the sequence of Gromov-Witten invariants
$$\bigg(\blr{\underset{b_{1;r}}{\underbrace{H_1,\ldots,H_1}},\ldots,
\underset{b_{k;r}}{\underbrace{H_k,\ldots,H_k}}}_{g,\be_r}^X\bigg)^{1/\lr{\om,\be_r}}$$
converges; see \eref{GWdfn_e} for the notation.

\subsection{Energy bounds}
\label{EBounds_subs}

\noindent
Enumerative counts of curves in many smooth projective varieties
are not well-defined.
It is instead natural to consider the asymptotic behavior of Gromov-Witten invariants.
For $g,N\!\in\!\Z^{\ge0}$, an almost K\"ahler manifold $(X,\om,J)$, and $\be\!\in\!H_2(X)$,
we denote~by $\ov\fM_{g,N}(X,\be)$ the moduli space of stable 
degree~$\be$ genus~$g$  $N$-marked  $J$-holomorphic maps to~$X$. 
For each $s\!=\!1,\ldots,N$, let
$$\ev_s\!: \ov\fM_{g,N}(X,\be)\lra X \quad\hbox{and}\quad
\psi_s\equiv c_1(L_s^*)\in H^2\big(\ov\fM_{g,N}(X,\be);\Q\big)$$
be the evaluation map 
and the first Chern class of  the universal cotangent line bundle at the $s$-th marked point,
respectively.
For $b_1,\!\ldots,b_N\!\in\!\Z^{\ge0}$ and $H_1,\ldots,H_N\!\in\!H^*(X;\Q)$, let
\BE{GWdfn_e}\blr{\tau_{b_1}H_1,\ldots,\tau_{b_N}H_N}_{g,\be}^X
= \int_{[\ov\fM_{g,N}(X,\be)]^{vir}}    
\prod_{s=1}^{s=N}\!\!\big(\psi_s^{b_s}\ev_s^*H_s\big);\EE
this rational number is a \textsf{descendant Gromov-Witten invariant}.

\begin{cnj}[{\cite[Conjecture~1]{g0ci}}]\label{GWbound_cnj}
Suppose $(X,\om)$ is a compact symplectic manifold and $g\!\in\!\Z$.
For all $H_1,\ldots,H_k\!\in\!H^*(X)$, there exists $C_{X,g}\!\in\!\R^+$ such~that 
\BE{GWbound_e}\bigg|\frac{\blr{b_1!\,\tau_{b_1}H_{c_1},\ldots,b_N!\,\tau_{b_N}H_{c_N}}_{g,\be}^X}{ N!}
\bigg|\le C_{X,g}^{\lr{\om,\be}+N}\EE
for all $\be\!\in\!H_2(X)$, $N,b_s\!\in\!\Z^{\ge0}$, and $c_s\!\in\!\{1,\ldots,k\}$.
\end{cnj}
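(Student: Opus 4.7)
The plan is to reduce the bound on general descendant invariants to a bound on primary invariants with a bounded number of non-divisor insertions, and then to bound the latter by a Kontsevich-style induction in the spirit of the arguments used for~$\P^2$ in~\cite{FI} and in Section~\ref{details_sec}. The normalizations $b_s!$ in the numerator and $N!$ in the denominator of~\eref{GWbound_e} are calibrated so that each step of this reduction preserves the form of the bound: the $b_s!$ factors absorb the combinatorial growth from iterating the topological recursion relations (TRRs), while the $1/N!$ absorbs the growth from iterating the divisor equation.

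First I would eliminate divisor insertions. Let $N_d$ be the number of $\tau_0 H_{c_s}$ with $\deg H_{c_s}\!=\!2$. The descendant divisor equation writes such an insertion as $\lr{H_{c_s},\be}$ times the invariant with that insertion deleted, plus $N\!-\!1$ correction terms lowering the descendant depth of a neighbour by one. Iterating yields an expression of the original invariant as a sum of at most $(CN)^{N_d}$ terms, each bounded by $(C\lr{\om,\be})^{N_d}$ times a lower-descendant invariant on at most $N\!-\!N_d$ insertions; the $1/N_d!$ portion of $1/N!$ then absorbs these via a Stirling estimate into a multiplicative factor~$\wt C^{\lr{\om,\be}}$. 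Next, for each surviving $\tau_{b_s}$ with $b_s\!\ge\!1$, I would apply iteratively the genus-$g$ TRR to replace $\psi_s^{b_s}$ by boundary contributions carrying lower $\psi$-powers distributed across a splitting node, together with the string and dilaton equations to remove insertions of $\tau_0\mathbf{1}$ and $\tau_1\mathbf{1}$. The factor $b_s!$ precisely offsets the combinatorial blow-up of the iterated TRR, so that an induction on $\sum_s b_s$ reduces the problem to bounding primary invariants.

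It remains to bound primary invariants $\blr{H_{c_1},\ldots,H_{c_{N_0}}}_{g,\be}^X$ in which each $H_{c_s}$ is a fixed non-divisor class, with $N_0$ bounded via the virtual dimension constraint by $C\lr{\om,\be}$. Define $u_\be$ to be the supremum of the absolute values of these invariants over the finitely many allowed tuples. WDVV in genus~$0$, and its higher-genus analogues in the form of Getzler's relation and the Virasoro constraints, should produce a quadratic recursion of the schematic form
$$u_\be \le C\!\sum_{\be_1+\be_2=\be}u_{\be_1}u_{\be_2};$$
the generating-series argument recalled in Section~\ref{details_sec} then forces $u_\be \le \wt C^{\lr{\om,\be}}$, and combining with the previous reductions yields~\eref{GWbound_e}.

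The first two steps are essentially formal manipulations available on any compact symplectic~$X$ and can plausibly be checked uniformly; the substance of the proof lies in the primary bound. For~$\P^2$ and its blowups WDVV produces a recursion with nonnegative coefficients and the generating-series argument closes immediately. In general, however, the recursion has negative structure constants (already for~$\P^3$, as noted after Conjecture~\ref{P3g0_cnj}), so a direct positivity-based induction fails, and in higher genus one does not even have a clean self-contained recursion of WDVV type. The hard part will be controlling these sign cancellations uniformly in~$g$ for a general $(X,\om)$; overcoming it will likely require either a norm on $H^*(X)$ tailored so that the quantum product becomes sub-multiplicative, or additional geometric input in the spirit of the Gross-Siebert program that decomposes the invariants of~$X$ into contributions from toric pieces and thereby reduces the bound to the already known toric case.
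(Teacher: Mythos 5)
The statement you are attempting is Conjecture~\ref{GWbound_cnj}; the paper contains no proof of it and explicitly presents it as open, established only in special cases ($X=\P^n$ in all genera in \cite{posgen}, the genus-$0$ case for complete intersections in $\P^n$ with hyperplane-power insertions in \cite{g0ci}, and low-genus Calabi--Yau complete intersections via mirror formulas). So there is no proof in the paper to compare against, and on its own terms your proposal is a strategy outline rather than a proof: your closing paragraph concedes that the decisive step --- controlling sign cancellations in the putative quadratic recursion for primary invariants, uniformly in $g$ and for a general compact $(X,\om)$ --- is unresolved. That concession is exactly where the conjecture lives, so the proposal cannot be accepted as a proof.

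Beyond that, several intermediate reductions are not available at the claimed generality. A genus-$g$ topological recursion relation usable to strip all descendants exists in closed form only in genus~$0$ (and, with corrections, genus~$1$); Getzler's relation is a genus-$1$ statement; and the Virasoro constraints are neither established for general compact symplectic manifolds nor of the schematic form $u_\be\le C\sum_{\be_1+\be_2=\be}u_{\be_1}u_{\be_2}$ with controllable constants. The paper in fact warns against precisely your route: it notes that recursive formulas for the descendant invariants of Conjecture~\ref{GWbound_cnj} involve negative coefficients, making the approach of~\cite{FI} unsuitable, and that already for primary genus-$0$ invariants of $\P^3$ the WDVV recursion has negative coefficients, so the positivity-based generating-series argument of Section~\ref{FI0_subs} yields only the upper bound of Section~\ref{P3g0_subs} and no matching control from below. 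The cases that are actually proved in \cite{g0ci} and \cite{posgen} proceed by a different route entirely: explicit mirror-type formulas for equivariant multi-pointed invariants obtained via the localization theorems of \cite{AB} and \cite{GP}, followed by combinatorial estimates on those closed expressions. Finally, your bound on the number of non-divisor primary insertions by $C\lr{\om,\be}$ passes through $\blr{c_1(TX),\be}$ in the dimension constraint, which for a general compact symplectic manifold is not linearly controlled by the energy $\lr{\om,\be}$; that step already requires monotonicity or some substitute hypothesis not present in the conjecture.
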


\noindent
The exponent $\lr{\om,\be}$  in Conjecture~\ref{GWbound_cnj}
is the energy of the $J$-holomorphic maps of class~$\be$, while
$\lr{\om,\be}\!+\!N$ is essentially the energy of the induced ``graph map".
The $\be\!=\!0$ case of this conjecture is obtained by induction
from the string and dilaton equations \cite[p527]{MirSym}
for Hodge classes on the Deligne-Mumford moduli spaces $\ov\cM_{g,k}$ 
of stable $k$~marked genus~$g$ curves.
In fact, the limsup of the $N$-th root of the left-hand side in~\eref{GWbound_e}
is at most~1.
Theorem~1 in~\cite{posgen} combines these relations with the Virtual Equivariant Localization
Theorem of~\cite{GP} to establish Conjecture~\ref{GWbound_cnj} for $X\!=\!\P^n$.
Theorem~1 in~\cite{g0ci} establishes the $g\!=\!0$ case of 
this conjecture for complete intersections $X\!\subset\!\P^n$
with each $H_s$ being a power of the hyperplane class $H\!\in\!H^2(\P^n)$.
For the reasons explained below, Conjecture~\ref{GWbound_cnj} also holds  
for all Calabi-Yau complete intersections $X\!\subset\!\P^n$ of dimension at least~4
with each $H_s$ again being a power of~$H$.\\

\noindent
For a Calabi-Yau threefold~$X$, Conjecture~\ref{GWbound_cnj} is equivalent to 
the existence of $C_{X,g}\!\in\!\R^+$ such~that 
\BE{CY3bnd_e}\big|\blr{}_{g,\be}^X\big|\le C_{X,g}^{\lr{\om,\be}}\qquad\forall~
\be\!\in\!H_2(X)\,.\EE
The existence of such a $C_{X,g}$ in turn corresponds to 
the string theory presumption that the  partition function determined by 
the genus~$g$ Gromov-Witten invariants of~$X$ has positive radius of convergence.
The bounds~\eref{CY3bnd_e} are  implied by mirror symmetry predictions.
These predictions have been confirmed mathematically for $g\!=\!0$
for many Calabi-Yau threefolds in \cite{Gi,LLY,LLY2,LLY3} and 
for $g\!=\!1$ for  Calabi-Yau complete intersections $X\!\subset\!\P^n$ in \cite{bcov1ci,bcov1},
but are yet to be confirmed  for $g\!\ge\!2$ for 
any compact Calabi-Yau threefold.
The mirror symmetry predictions can also be used to obtain asymptotics for 
the invariants on the left-hand side of~\eref{CY3bnd_e} in the style of 
Conjecture~\ref{P2_cnj}, as is done in~\cite{CSV,KZ}.\\

\noindent
For a Calabi-Yau manifold~$X$ of (complex) dimension at least~4,
the Gromov-Witten invariants of genus~2 and higher vanish.
Conjecture~\ref{GWbound_cnj} then reduces to its cases for the genus~0 Gromov-Witten invariants
with arbitrary insertions
and for the genus~1 Gromov-Witten invariants with no insertions, i.e.~as in~\eref{CY3bnd_e}.
For complete intersections $X\!\subset\!\P^n$,
such genus~0 bounds with each $H_i$ being a power of~$H$ are provided by 
Theorem~1 in~\cite{g0ci}.
The required genus~1 bounds for complete intersections $X\!\subset\!\P^n$ 
are implied by the genus~1 mirror formulas established in~\cite{bcov1ci,bcov1}.\\

\noindent
Theorem~1 in~\cite{g0ci} and Theorem~1 in~\cite{posgen} referenced above are obtained 
from the mirror symmetry formulas for the equivariant multi-pointed  genus~0 
Gromov-Witten invariants of complete intersections $X\!\subset\!\P^n$ and 
for the equivariant multi-pointed  genus~$g$ Gromov-Witten invariants of~$\P^n$,
respectively, established in the two papers.
The starting inputs for both mirror formulas are the mirror formulas for
Givental's $J$-function for the equivariant one-pointed genus~0 Gromov-Witten invariants
of complete intersections $X\!\subset\!\P^n$ obtained in \cite{Gi,LLY}
and for its two-pointed analogue obtained in~\cite{bcov0} from Givental's $J$-function.
Givental's $J$-functions for the equivariant one-pointed genus~0 Gromov-Witten invariants
of many other spaces have been computed in \cite{Ciocan95,Kim99,LLY,LLY2,LLY3} and in other works.
The reasoning in \cite{bcov0,g0ci,posgen} can be used to convert these $J$-functions
into 
mirror formulas for the equivariant multi-pointed  genus~0 Gromov-Witten invariants 
of complete intersections in many spaces with groups actions and
the equivariant multi-pointed  genus~$g$ Gromov-Witten invariants of
these spaces themselves.
It should then be possible to establish the corresponding cases of 
Conjecture~\ref{GWbound_cnj} as in \cite{g0ci,posgen}.\\

\noindent
A special case of the bound~\eref{GWbound_e} is \cite[(7.3)]{T95}.
This case concerns only primary Gromov-Witten invariants, 
i.e.~$b_s\!=\!0$ for all~$s$, and only in symplectic manifolds $(X,\om)$
with $H_2(X;\Z)\!=\!\Z$.
It is stated, without a proof, that such a bound follows from the WDVV recursion
(analogue of Kontsevich's recursion) of \cite[Theorem~7.1]{T95}.
This does appear to be the case in general, though we do not see a simple argument.
We provide a proof of \cite[(7.3)]{T95} for $X\!=\!\P^3$ from the WDVV recursion 
by adapting the approach of~\cite{FI} described in Section~\ref{FI0_subs}.\\

\noindent
The paper~\cite{FI} was in fact first brought to the author's attention by R.~Pandharipande in~2008
while describing a scheme~\cite{MaP} to reduce the bounds~\eref{CY3bnd_e} for 
the renown quintic threefold to the bound of Conjecture~\ref{GWbound_cnj} 
for the genus~0 Gromov-Witten invariants of~$\P^3$ via \cite[Theorem~1]{Gi01} and
the degeneration approach of~\cite{MaP0}.
It was an expectation of R.~Pandharipande that the bounds for~$\P^3$ could
be established by adapting the approach of~\cite{FI}.
However, recursive formulas for the descendant invariants of Conjecture~\ref{GWbound_cnj}
involve negative coefficients; this makes the approach of~\cite{FI} unsuitable
for these invariants.
The proof of \cite[Theorem~1]{g0ci}, which in particular
establishes these bounds for the genus~0 Gromov-Witten invariants of all projective spaces,
instead uses the classical equivariant localization theorem of~\cite{AB}.
The proof of \cite[Theorem~1]{posgen}, which extends such bounds to arbitrary genus, 
removes the need to use \cite[Theorem~1]{Gi01} in the scheme of~\cite{MaP} 
to establish the bounds~\eref{CY3bnd_e}.
It also enables the application of this scheme for establishing Conjecture~\ref{GWbound_cnj}
for many other smooth projective varieties
(at least  with each $H_s$  being a power the hyperplane class~$H$).

\begin{ques}\label{bdn_ques}
Are the bounds arising from Conjecture~\ref{GWbound_cnj} sharp?
Are they reflexive of the asymptotic behavior of some natural sequences of
Gromov-Witten invariants, as in Conjectures~\ref{P2_cnj} and~\ref{P3g0_cnj}?
\end{ques}

\subsection{Vanishing statements}
\label{vanish_subs}

\noindent
The observations and speculations on the vanishing of certain descendant Gromov-Witten
invariants in this section concern \sf{monotone} symplectic manifolds.
We recall that a symplectic manifold~$(X,\om)$ is called 
\sf{monotone with minimal Chern number $\nu\!\in\!\R^+$} if 
\BE{c1omcond_e}c_1(X)=\la[\om]\in H^2(X;\R)\EE
for some $\la\!\in\!\R^+$ and $\nu$ is the minimal value of $c_1(X)$
on the homology classes representable by non-constant $J$-holomorphic maps $S^2\!\lra\!X$
for every $\om$-compatible almost complex structure on~$X$.
Perhaps the monotone condition in Conjecture~\ref{GWeq0_cnj} below can be weakened
to $(X,\om)$ being positive (Fano) with minimal Chern number~$\nu$,
i.e.~dropping the requirement~\eref{c1omcond_e}, 
or needs to be strengthened with the additional requirement that $H^2(X;\R)$
be one-dimensional.
Since the Gromov-Witten invariants of a symplectic manifold~$(X,\om)$ are invariant
under deformations of~$\om$, 
\eref{c1omcond_e} needs to hold only for some symplectic form~$\om'$ deformation
equivalent to~$\om$ and $\nu$ below~\eref{c1omcond_e} can be taken to be the maximum 
of the corresponding values over all such~$\om'$ for which \eref{c1omcond_e} holds
for some~$\la$.

\begin{cnj}\label{GWeq0_cnj}
Suppose $(X,\om)$ is a compact monotone symplectic manifold with minimal 
Chern number~$\nu$, 
$$g,N\in\Z^{\ge0} ~~\hbox{with}~  2g\!+\!N\ge3,\qquad
b_s,c_s\in\Z^{\ge0},~H_s\!\in H^{2c_s}(X)~~\hbox{for}~s\!=\!1,\ldots,N.$$
If there exists $S\!\subset\!\{1,\ldots,k\}$ such~that
\BE{GWeq0_e2}b_s\!+\!c_s<\nu~~\forall\,s\!\in\!S
\qquad\hbox{and}\qquad \sum_{s\in S}b_s>3(g\!-\!1)\!+\!N,\EE
then $\blr{\tau_{b_1}H_1,\ldots,\tau_{b_N}H_N}_{g,\be}^X=0$.
\end{cnj}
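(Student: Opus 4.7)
\noindent
The plan is to reduce the descendant integral to one over the Deligne--Mumford moduli space $\ov\cM_{g,N}$ via the stabilization map
\[\pi\!:\,\ov\fM_{g,N}(X,\be)\lra\ov\cM_{g,N},\]
well-defined because $2g\!+\!N\!\ge\!3$. The first half of~\eref{GWeq0_e2} together with monotonicity will kill the boundary correction in the comparison $\psi_s\!=\!\pi^*\bar\psi_s\!+\!D_s$, while the second half will kill the remaining pullback for degree reasons on $\ov\cM_{g,N}$.

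\noindent
Since dropping from $S$ the indices with $b_s\!=\!0$ preserves both hypotheses of~\eref{GWeq0_e2}, I first assume $b_s\!\ge\!1$ for every $s\!\in\!S$. For each such $s$, write $\psi_s\!=\!\pi^*\bar\psi_s\!+\!D_s$, where $D_s$ is the substack of stable maps whose domain has a genus-$0$ component of positive class carrying marked point~$s$ and attached at a single node, contracted by $\pi$. Such a component is abstractly $\P^1$ with two special points, hence rigid, so the cotangent at $s$ is trivial along $D_s$ and $\psi_s\!\cdot\!D_s\!=\!0$. The adjunction formula gives $D_s^2\!=\!-D_s\!\cdot\!\pi^*\bar\psi_s$, and induction yields
\BE{GWeq0plan_e}\psi_s^{b_s}=(\pi^*\bar\psi_s)^{b_s}+D_s\!\cdot\!(\pi^*\bar\psi_s)^{b_s-1}.\EE

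\noindent
Inserting~\eref{GWeq0plan_e} for each $s\!\in\!S$ into $\blr{\tau_{b_1}H_1,\ldots,\tau_{b_N}H_N}_{g,\be}^X$ produces a leading piece $\pi^*\!\big(\prod_{s\in S}\!\bar\psi_s^{b_s}\big)$ plus a sum of boundary corrections, each carrying at least one factor $D_{s_0}$ with $s_0\!\in\!S$. The splitting axiom factors any such correction through a genus-$0$ bubble invariant
\[\blr{\tau_0H_{s_0},\tau_0T_a}_{0,\be_1}^X,\qquad T_a\!\in\!H^*(X;\Q),\ \be_1\!\ne\!0,\]
since on $D_{s_0}$ the remaining factor $(\pi^*\bar\psi_{s_0})^{b_{s_0}-1}$ restricts to powers of the \emph{main}-side cotangent at the node and contributes no $\psi$-weight on the bubble. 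The virtual complex dimension of $\ov\fM_{0,2}(X,\be_1)$ is $\dim_\C X\!-\!1\!+\!\lr{c_1(X),\be_1}$, and non-vanishing forces
\[\deg T_a/2=\dim_\C X-1+\lr{c_1(X),\be_1}-c_{s_0}\le\dim_\C X,\]
whence $c_{s_0}\!\ge\!\lr{c_1(X),\be_1}-1\!\ge\!\nu\!-\!1$ by monotonicity. Combined with $b_{s_0}\!+\!c_{s_0}\!<\!\nu$ and $b_{s_0}\!\ge\!1$, which force $c_{s_0}\!\le\!\nu\!-\!2$, this is a contradiction, so every boundary correction vanishes.

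\noindent
Only the leading term then remains. By the projection formula, the descendant integral equals a scalar multiple of $\int_{\ov\cM_{g,N}}\!\big(\prod_{s\in S}\!\bar\psi_s^{b_s}\big)\!\cdot\!\alpha$ for some class $\alpha$; since $\sum_{s\in S}b_s\!>\!3(g\!-\!1)\!+\!N\!=\!\dim_\C\!\ov\cM_{g,N}$, the monomial $\prod_{s\in S}\!\bar\psi_s^{b_s}$ vanishes by degree. The main obstacle is making~\eref{GWeq0plan_e} precise on $\ov\fM_{g,N}(X,\be)$: the divisor $D_s$ is really a union of strata parametrizing bubble \emph{trees} at~$s$, so the identities $\psi_s\!\cdot\!D_s\!=\!0$ and $D_s^2\!=\!-D_s\!\cdot\!\pi^*\bar\psi_s$ and the splitting axiom must be handled stratum by stratum against the virtual fundamental class, with particular care when several marked points of~$S$ lie on a common tree. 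The dimensional argument nonetheless propagates, as every bubble in such a tree contributes at least~$\nu$ to $\lr{c_1(X),\cdot}$ and the component at $s$ remains rigid.
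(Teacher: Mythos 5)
This statement is Conjecture~\ref{GWeq0_cnj}; the paper offers no proof of it. It records only that the cases $X\!=\!\P^n$ (all genera) and genus~0 complete intersections in $\P^n$ are established in \cite{posgen} and \cite{g0ci}, and there as immediate consequences of equivariant mirror symmetry formulas, not by a direct geometric argument. So a complete version of your argument would be a new theorem rather than a reconstruction of the paper's proof; unfortunately it is not complete, and the gap sits exactly at the step you yourself flag as ``the main obstacle.''

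The identities $\psi_s\!\cdot\!D_s\!=\!0$ and $D_s^2\!=\!-D_s\!\cdot\!\pi^*\bar\psi_s$, and hence~\eref{GWeq0plan_e}, are the standard ones for the map $\ov\cM_{g,N+1}\!\lra\!\ov\cM_{g,N}$ forgetting one point, where the exceptional component is a three-pointed, degree-zero $\P^1$ and the cotangent line at the marked point is canonically trivial along the divisor. For the stabilization map $\pi\!:\ov\fM_{g,N}(X,\be)\!\lra\!\ov\cM_{g,N}$ the divisor relation $\psi_s\!=\!\pi^*\bar\psi_s\!+\!D_s$ does hold, but the component of $D_s$ carrying $x_s$ is a positive-degree rational curve (or tree) with nontrivial moduli; a $\P^1$ with two special points is not rigid (it has a $\C^*$ of automorphisms), and the relevant cotangent line at $x_s$ is the $\psi$-class at the first marked point of $\ov\fM_{0,2}(X,\be_1)$, which is a nonzero class in general. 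Hence $\psi_s\!\cdot\!D_s\!\neq\!0$, the iteration producing~\eref{GWeq0plan_e} breaks down, and expanding $\psi_s^{b_s}$ instead generates descendant insertions on the bubbles; controlling those is essentially the whole content of the conjecture. Two further points would need repair even if~\eref{GWeq0plan_e} were granted: your dimension count treats only the stratum where the bubble carries $x_{s_0}$ and the node alone, whereas $D_{s_0}$ contains strata whose contracted trees carry additional marked points --- in particular points $s\!\notin\!S$, for which no bound $b_s\!+\!c_s\!<\!\nu$ is available, so the degree contradiction does not propagate; and the final step needs $\pi_*$ of the remaining virtual class against $\prod_{s\in S}\bar\psi_s^{b_s}$, which is fine, but only after all correction terms are genuinely gone. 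The degree identity $\sum_{s\in S}b_s\!>\!3(g\!-\!1)\!+\!N\!=\!\dim_{\C}\ov\cM_{g,N}$ is certainly the numerology motivating the conjecture, so your leading-term analysis identifies the right heuristic; the missing ingredient is a mechanism (in the known cases, the mirror/localization formulas of \cite{g0ci,posgen}) that actually kills the boundary contributions.
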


\noindent
For example,
$$\blr{\underset{N-2}{\underbrace{\tau_bH^{n-b},\ldots,\tau_bH^{n-b}}},
\cdot,\cdot}_{g,d}^{\P^n}=0
\quad \forall\,N\!\ge\!3,\,b\!=\!1,2,\ldots,n~~\hbox{with}~(N\!-\!2)b>3(g\!-\!1)\!+\!N.$$
For $g\!=\!0$ and $X\!=\!\P^1$, this statement follows from the dilaton relation 
\cite[p527]{MirSym}. 
For $n\!\ge\!2$ (which is necessary for the assumptions of Conjecture~\ref{GWeq0_cnj}
to be satisfied if $g\!\ge\!1$), $\tau_bH^{n-b}$ is not a (virtual) divisor on $\ov\fM_{g,N}(\P^n,d)$ 
and there appears to be no direct geometric reason for the vanishing above.\\

\noindent
The assumption $N\!\ge\!3$ if $g\!=\!0$ in Conjecture~\ref{GWeq0_cnj} is needed.
For example,
$$\blr{H^2,H^2}_{0,\ell}^{\P^2},\blr{\tau_2,H^2}_{0,\ell}^{\P^2}=1, \quad
\blr{\tau_1H,H^2}_{0,\ell}^{\P^2}=-1,$$
where $\ell\!\in\!H_2(\P^2;\Z)$ is the standard generator;
the constraints for all invariants above satisfy~\eref{GWeq0_e2}.
For $g\!\ge\!1$, the condition $2g\!+\!N\!\ge\!3$ is forced
by the second requirement in~\eref{GWeq0_e2}.
Both conditions in~\eref{GWeq0_e2} are needed as well. 
For example,
$$\blr{\tau_1H,H^2,H}_{0,\ell}^{\P^2},\blr{\tau_2,H^2,H}_{0,\ell}^{\P^2}
\blr{\tau_2,H^2,H,H}_{0,\ell}^{\P^2}=0,\quad
\blr{\tau_3,H,H}_{0,\ell}^{\P^2},\blr{\tau_1H,H^2,H,H}_{0,\ell}^{\P^2}=1.$$
The constraints for the first three invariants above satisfy both conditions.
The constraints for the second-to-last invariant  fail the first condition,
but satisfy the second.
The constraints for the last invariant satisfy the first condition,
but fail the second.\\

\noindent
Theorem~2 in~\cite{posgen} establishes Conjecture~\ref{GWbound_cnj} for $X\!=\!\P^n$.
Theorem~2 in~\cite{g0ci} establishes the $g\!=\!0$ case of 
this conjecture for complete intersections $X\!\subset\!\P^n$
with each $H_s$ being a power of the hyperplane class $H\!\in\!H^2(\P^n)$.
Because of the conditions on~$b_s$ in Conjecture~\ref{GWeq0_cnj}, 
the assumptions of this conjecture are never satisfied if $\nu\!=\!0,1$ 
(Calabi-Yau and borderline Fano cases) or if $\nu\!=\!2$ and $g\!\ge\!1$. 
For the same reason, its conclusion is the strongest for $X\!=\!\P^n$ 
(when the Fano index~$\nu$ is maximal relative to the dimension of~$X$,
at least in the category of K\"ahler manifolds).\\

\noindent
Theorem~2 in~\cite{g0ci} and Theorem~2 in~\cite{posgen}  are obtained 
from the mirror symmetry formulas for the equivariant multi-pointed  genus~0 
Gromov-Witten invariants of complete intersections $X\!\subset\!\P^n$ and 
for the equivariant multi-pointed  genus~$g$ Gromov-Witten invariants of~$\P^n$,
respectively, established in the two papers.
Unlike the situation with Theorem~1 in~\cite{g0ci} and Theorem~1 in~\cite{posgen}
the derivation of which from the mirror symmetry formulas requires 
a significant amount of combinatorial analysis,
Theorem~2 in~\cite{g0ci} and Theorem~2 in~\cite{posgen} are immediate consequences
of these formulas.
As explained at the end of Section~\ref{EBounds_subs}, 
it should be possible to extend the mirror symmetry formulas of \cite{g0ci,posgen}
to many other targets and thus test Conjecture~\ref{GWeq0_cnj} for~them.

\section{Proof of Theorem~\ref{TW_thm}}
\label{details_sec}

\noindent
We recall the approach of~\cite{FI} to bounding recursively defined sequences  of the~form
\BE{recdfnseq_e}
n_d=\sum_{\begin{subarray}{c}d_1+d_2=d\\ d_1,d_2\ge1\end{subarray}}\!\!\!\!
f(d_1,d_2)n_{d_1}n_{d_2} \qquad\forall\,d\!\ge\!2,\EE
with $n_1,f(d_1,d_2)\!>\!0$ in Section~\ref{FI0_subs}.
Section~\ref{FI_subs} describes the attempt in~\cite{FI} to obtain Proposition~\ref{FIexp_prp} 
and to conclude the $g\!=\!0$ case of Conjecture~\ref{P2_cnj} {\it directly} from~it;
we achieve the former in Section~\ref{expand_subs}.
Section~\ref{TW_subs} presents the observations in~\cite{TW} that are used to
deduce Theorem~\ref{TW_thm} from Propositions~\ref{FIexp_prp}.

\subsection{Lower and upper bounds}
\label{FI0_subs}

\noindent
Let $n_1,n_2,\ldots$ be a sequence of numbers satisfying
\BE{stndrec_e}
n_d=a\sum_{\begin{subarray}{c}d_1+d_2=d\\ d_1,d_2\ge1\end{subarray}}\!\!\!\!\!\!n_{d_1}n_{d_2}
\qquad\forall\,d\ge2,\EE
for some $a\!>\!0$.
The generating function
$$\Phi(q)\equiv \sum_{d=1}^{\i}n_dq^d$$
then satisfies $\Phi(q)=n_1q+a\Phi(q)^2$.
Thus,
$$\Phi(q)=\frac{1-\sqrt{1-4an_1q}}{2a}
=-\frac{1}{2a}\sum_{d=1}^{\i}\binom{1/2}{d}(-4an_1q)^d
=\sum_{d=1}^{\i}\frac{(2d\!-\!2)!}{d!(d\!-\!1)!}a^{d-1}n_1^dq^d\,;$$
the middle equality above is the Binomial Theorem.

\begin{lmm}\label{rec_lmm}
If $n_1,n_2,\ldots$ is a sequence of numbers satisfying
$$n_d=a\sum_{\begin{subarray}{c}d_1+d_2=d\\ d_1,d_2\ge0\end{subarray}}\!\!\!\!\!
\frac{f(d_1)f(d_2)}{f(d)}n_{d_1}n_{d_2}
\qquad\forall\,d\ge2,$$
for some $a\!>\!0$ and $f\!:\Z^+\!\lra\!\R$, then
$$n_d=\frac{(2d\!-\!2)!}{d!(d\!-\!1)!}
\frac{a^{d-1}}{f(d)} \big(f(1)n_1\big)^d \qquad\forall\,d\ge1.$$
\end{lmm}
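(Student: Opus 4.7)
The plan is to reduce this weighted recursion to the unweighted one \eqref{stndrec_e} that was just solved explicitly via the generating function $\Phi(q)$, by a simple change of variables.

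First I would introduce $m_d=f(d)n_d$ for $d\!\ge\!1$ (treating the indices $d_1,d_2\!\ge\!0$ in the lemma as a minor typo for $d_1,d_2\!\ge\!1$, since otherwise $f(0)$ and $n_0$ are undefined). Multiplying the hypothesized recursion by $f(d)$ and regrouping the factors of $f$ on the right turns it into
\BE{mrec_e}m_d=a\!\!\!\sum_{\begin{subarray}{c}d_1+d_2=d\\ d_1,d_2\ge1\end{subarray}}\!\!\!\!\!\!m_{d_1}m_{d_2}\qquad\forall\,d\ge2,\EE
which is precisely \eqref{stndrec_e} for the sequence $(m_d)$.

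Next I would invoke the computation carried out just before the lemma: the generating function $\Psi(q)\!=\!\sum m_dq^d$ satisfies $\Psi(q)\!=\!m_1q\!+\!a\Psi(q)^2$, hence $\Psi(q)\!=\!(1\!-\!\sqrt{1-4am_1q})/(2a)$, and expanding via the Binomial Theorem yields
$$m_d=\frac{(2d\!-\!2)!}{d!(d\!-\!1)!}\,a^{d-1}m_1^d\qquad\forall\,d\ge1.$$
Substituting back $m_d\!=\!f(d)n_d$ and $m_1\!=\!f(1)n_1$ and dividing by $f(d)$ gives the claimed closed form. The $d\!=\!1$ case is trivially consistent, since the binomial coefficient $(2d\!-\!2)!/(d!(d\!-\!1)!)$ equals $1$ there.

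There is no real obstacle: the only thing to check is the bookkeeping of the $f$-factors, which collapse exactly because each term in the sum on the right carries a factor $f(d_1)f(d_2)/f(d)$ that is cancelled after multiplication by $f(d)$ and replacement of $n_{d_i}$ by $m_{d_i}/f(d_i)$. Once the reduction to \eqref{mrec_e} is made, the formula follows from the already-derived closed form for the standard recursion.
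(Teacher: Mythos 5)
Your proof is correct and is exactly the paper's argument: the paper's entire proof consists of the observation that $\wt{n}_d\!=\!f(d)n_d$ satisfies the standard recursion~\eref{stndrec_e}, after which the closed form follows from the generating-function computation displayed just before the lemma. Your reading of the summation bounds $d_1,d_2\!\ge\!0$ as a typo for $d_1,d_2\!\ge\!1$ is also the right one.
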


\begin{proof} The sequence $\wt{n}_d\!=\!f(d)n_d$ satisfies the recursion~\eref{stndrec_e}.
\end{proof}

\noindent
For each $g\!\in\!\Z^{\ge0}$ and $d\!\in\!\Z^+$, let
$$n_{g,d}=\frac{N_{g,d}}{(3d\!-\!1\!+\!g)!}\,.$$

\begin{crl}\label{FIbnd_crl}
The numbers $n_{0,d}$ satisfy
\BE{P2asymp_e0} \frac{8}{5}\bigg(\frac{1}{27}\bigg)^dd^{-7/2}
\le n_{0,d}\le \frac{45}{16}\bigg(\frac{4}{15}\bigg)^dd^{-7/2}\,.\EE 
In particular,
$$\frac{1}{27}\le \liminf_{d\lra\i}\sqrt[d]{n_{0,d}}\le 
b_+\!\equiv\!\limsup_{d\lra\i}\sqrt[d]{n_{0,d}}\le \frac{4}{15}\,.$$
\end{crl}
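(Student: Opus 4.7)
The plan is to sandwich $n_{0,d}$ between two sequences whose closed forms come from Lemma~\ref{rec_lmm}, and then extract the explicit constants via Stirling.

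First, I would rewrite Kontsevich's recursion (to be restated as~\eref{P2g0dfn_e}) in terms of $n_{0,d}\!=\!N_{0,d}/(3d\!-\!1)!$. Using the identity $d_2(3d_1\!-\!1)-d_1(3d_2\!-\!2)=2d_1-d_2$ to merge the two binomial summands of Kontsevich's formula and then symmetrizing in $(d_1,d_2)$, the recursion takes the clean form
$$n_{0,d} \;=\; \sum_{\substack{d_1+d_2=d\\ d_1,d_2\ge 1}} K(d_1,d_2,d)\, n_{0,d_1}\, n_{0,d_2},
\qquad K(d_1,d_2,d) \;=\; \frac{d_1 d_2\bigl[(3d\!+\!6) d_1 d_2 - 2d^2\bigr]}{2(3d\!-\!1)(3d\!-\!2)(3d\!-\!3)}\,.$$
The kernel $K$ is positive on the range of summation because $d_1 d_2\!\ge\!d\!-\!1$ forces $(3d\!+\!6)d_1 d_2 - 2d^2 \ge d^2\!+\!3d\!-\!6 > 0$ for $d\!\ge\!2$.

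The next step is the two-sided estimate
$$\frac{1}{54}\cdot\frac{(d_1 d_2)^2}{d^2} \;\le\; K(d_1,d_2,d) \;\le\; \frac{2}{15}\cdot\frac{(d_1 d_2)^2}{d^2}
\qquad (d\!\ge\!2,\; d_1{+}d_2{=}d,\; d_1,d_2\!\ge\!1).$$
Since the ratio $K\!\cdot\!d^2/(d_1 d_2)^2$ is a monotone function of $d_1 d_2$ (through the single term $-2d^2/(d_1 d_2)$), the upper bound is attained at the AM--GM extreme $d_1 d_2=d^2/4$ and reduces to $21d^2-48d+12\!\ge\!0$ for $d\!\ge\!2$, which factors as $21(d-2)(d-2/7)$. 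The lower bound is attained at the opposite extreme $d_1 d_2=d\!-\!1$ and reduces to $54d^3-83d^2+13d-2\!\ge\!0$ for $d\!\ge\!2$, an elementary polynomial check. Applying Lemma~\ref{rec_lmm} with $f(d)\!=\!d^2$, $a_\pm\!\in\!\{1/54,\,2/15\}$, and initial value $n_{0,1}\!=\!1/2$, a straightforward induction in $d$ (using the bounds on $K$) yields $\underline{n}_d \le n_{0,d} \le \bar n_d$ with
$$\bar n_d = \frac{1}{2d^3}\binom{2d-2}{d-1}\!\left(\frac{1}{15}\right)^{\!d-1}\!,
\qquad
\underline{n}_d = \frac{1}{2d^3}\binom{2d-2}{d-1}\!\left(\frac{1}{108}\right)^{\!d-1}\!.$$

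Finally, two-sided Stirling estimates $\frac{4^n}{2\sqrt n}\le\binom{2n}{n}\le\frac{4^n}{\sqrt{\pi n}}$ for $n\!\ge\!1$ (with $n\!=\!d\!-\!1$; the case $d\!=\!1$ is checked by hand) give the stated bounds $\frac{8}{5}(1/27)^d d^{-7/2} \le n_{0,d} \le \frac{45}{16}(4/15)^d d^{-7/2}$, whereupon taking $d$th roots yields the $\liminf$/$\limsup$ statement. The main obstacle is the \emph{selection} of the pair $(f,a_\pm)$: the power $d^{-7/2}$ in the asymptotic forces $f(d)\!\propto\!d^2$ (since $\binom{2d-2}{d-1}/d\!\sim\!4^d/(4\sqrt\pi\,d^{3/2})$), but with that choice locked in, the existence of admissible constants $a_\pm$ hinges on the delicate polynomial inequalities above; any other natural $f$ (e.g.\ $f(d)\!=\!d^3$ or $f$ involving factors of $3d\!-\!j$) either degenerates at the boundary $d_1\!=\!1$ or gives $b_+\!>\!4/15$. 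The numerical coincidence that $2/15$ is saturated at $d\!=\!2$ while $1/54$ is saturated as $d\!\to\!\infty$ with $d_1\!=\!1$ is the substantive content of the argument.
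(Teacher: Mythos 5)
Your argument is correct and is essentially the paper's proof: the same kernel $f(d_1,d_2)$ from Kontsevich's recursion, sandwiched between product-form kernels handled by Lemma~\ref{rec_lmm} with the same constants $1/54$ and $2/15$, followed by the same Stirling step. The only (immaterial) difference is that the paper's lower comparison kernel is $\frac{1}{54}\frac{d_1d_2(3d_1-2)(3d_2-2)}{d(3d-2)}$, i.e.\ $f(d)=d(3d-2)$ in Lemma~\ref{rec_lmm}, rather than your $f(d)=d^2$; both yield the base $1/108$ and the stated bound.
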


\begin{proof} Let 
\begin{equation*}\begin{split}
f(d_1,d_2)
&=\frac{d_1d_2((3d_1\!-\!2)(3d_2\!-\!2)(d\!+\!2)+8(d\!-\!1))}{6(3d\!-\!3)(3d\!-\!2)(3d\!-\!1)}\\
&=\frac{d_1d_2(3d_1d_2(d\!+\!2)-2d^2)}{2(3d\!-\!3)(3d\!-\!2)(3d\!-\!1)}
\hspace{1in}\hbox{where}\quad d\equiv d_1\!+\!d_2\,.
\end{split}\end{equation*}
We note that 
\BE{FIbnd_e3}\begin{split}
\frac{1}{54}\frac{d_1d_2(3d_1\!-\!2)(3d_2\!-\!2)}{d(3d\!-\!2)}
&=\frac{d_1d_2(3d_1\!-\!2)(3d_2\!-\!2)(d\!-\!1)}{6(3d\!-\!3)(3d\!-\!2)3d}\\
&\le f(d_1,d_2)
\le  \frac{d_1d_2\cdot 3d_1d_2(d\!-\!\frac23)}{2\frac{3d}{2}(3d\!-\!2)\frac{5d}{2}}
=\frac{2}{15}\frac{d_1^2d_2^2}{d^2}  
\end{split}\EE
for all $d_1,d_2\!\in\!\Z^+$ and $d\!\equiv\!d_1\!+\!d_2$.
By  \cite[(10.4)]{RT}, 
\BE{P2g0dfn_e}
n_{0,1}=\frac12, \qquad
n_{0,d}=\sum_{\begin{subarray}{c}d_1+d_2=d\\ d_1,d_2\ge1\end{subarray}}\!\!\!\!
f(d_1,d_2)n_{0,d_1}n_{0,d_2} \quad\forall~d\!\ge\!2\,.\EE
By \eref{FIbnd_e3} and Lemma~\ref{rec_lmm},
$$ \frac92 \frac{(2d)!}{(d!)^2}\bigg(\frac{1}{108}\bigg)^dd^{-3}
\le  n_{0,d}\le  \frac{15}{4}\frac{(2d)!}{(d!)^2}\bigg(\frac{1}{15}\bigg)^dd^{-3}\,.$$
By Stirling's formula \cite[Theorem~15.19]{A},
\BE{Stirling_e}
\frac{16}{45} 4^d d^{-1/2}
\le \frac{4^d}{\sqrt{\pi d}}\bigg(1+\frac1{4d}\bigg)^{-2}
\le\frac{(2d)!}{(d!)^2}\le \frac{4^d}{\sqrt{\pi d}}\bigg(1+\frac1{8d}\bigg)
\le \frac34 4^d d^{-1/2}\,.\EE\\
Combining the last two statements, we obtain~\eref{P2asymp_e0}.
\end{proof}

\subsection{The reasoning in \cite{FI}}
\label{FI_subs}

\noindent
Proposition~\ref{FIexp_prp} and Corollary~\ref{F1_crl} below describe the behavior
the generating series
\BE{F0dfn_e}F_0(z)\equiv\frac13\sum_{d=1}^{\i}n_{0,d}e^{dz}
\quad\hbox{and}\quad
F_1(z)\equiv\sum_{d=1}^{\i}n_{1,d}e^{dz}, \qquad z\!\in\!\C,\EE
for the counts of genus~0 and~1 curves in~$\P^2$.
The statement of Proposition~\ref{FIexp_prp} appears in~\cite{FI}.
This statement, not established in~\cite{FI}, is behind the claim 
in~\cite{FI} to confirm the $g\!=\!0$ case of Conjecture~\ref{P2_cnj}.
We prove Proposition~\ref{FIexp_prp} in Section~\ref{expand_subs}.\\

\noindent
For $\de\!\in\!\R^+$ and $x_0\!\in\!\R$, let
$$B_{\de}(0)=\big\{z\!\in\!\C\!:|z|\!<\!\de\big\}, \quad
\C_{x_0}^<=\big\{z\!\in\!\C\!:\Re z\!<\!x_0\big\}, \quad
\C_{x_0}^{\le}=\big\{z\!\in\!\C\!:\Re z\!\le\!x_0\big\}.$$
We define $z^{1/2}$ on $\C\!-\!\R^+$ by the condition $\Im(z^{1/2})\!\ge\!0$.

\begin{prp}\label{FIexp_prp}
There exists $x_0\!\in\!\R$ such that the power series~$F_0$
converges on $\C_{x_0}^<$ and diverges outside of~$\C_{x_0}^{\le}$.
Furthermore, there~exist $\de\!\in\!\R^+$,  $a_0,a_2\!\in\!\R$, and
$a_{2d}\!\in\!\R$ and $a_{2d+1}\!\in\!\fI\R$
with $d\!\in\!\Z$, $d\!\ge\!2$, such~that $a_5\!\in\!\fI\R^-$ and
\BE{FIexp_e}
F_0(x_0\!+\!z)=a_0+a_2z+a_4z^2+\sum_{d=5}^{\i}a_dz^{d/2}\EE
for all $z\!\in\!B_{\de}(0)$ with $\Re(z)\!\le\!0$.
\end{prp}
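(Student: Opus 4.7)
The plan is to convert Kontsevich's recursion~\eref{P2g0dfn_e} into a polynomial ODE for $F_0$ on its domain of convergence and then apply the Frobenius method at the real boundary singularity, following Sarnak's suggestion. First I would multiply \eref{P2g0dfn_e} through by $2(3d\!-\!3)(3d\!-\!2)(3d\!-\!1)e^{dz}$ and sum over $d\!\ge\!2$: each factor $d^ke^{dz}$ becomes the $k$-th derivative of $F_0$ (up to constants), while convolutions of the form $\sum_{d_1+d_2=d}d_1^{j_1}d_2^{j_2}n_{0,d_1}n_{0,d_2}e^{dz}$ become products $F_0^{(j_1)}F_0^{(j_2)}$. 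The numerator of $f(d_1,d_2)$ in~\eref{P2g0dfn_e} is a polynomial of small degree in $d_1,d_2$, so the resulting identity is a polynomial ODE $P(F_0,F_0',F_0'',F_0''')\!=\!0$ of order at most three with polynomial coefficients. Because every $n_{0,d}$ is positive, Pringsheim's theorem forces the boundary singularity of~\eref{F0dfn_e} to lie at a real point $z\!=\!x_0$, and the finiteness and nondegeneracy of $x_0$ follow from Corollary~\ref{FIbnd_crl}. This yields the convergence/divergence assertion of the proposition.

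The main step is to analyze this ODE near $z\!=\!x_0$ via Frobenius. Substituting $F_0(x_0+z)\!=\!\sum_{d\ge0}a_dz^{d/2}$ and identifying the indicial equation, I expect three integer characteristic exponents $0,1,2$ corresponding to the integration constants of the third-order ODE, which produce the free coefficients $a_0,a_2,a_4$ in~\eref{FIexp_e}. Resonance then forces the first nonanalytic correction to enter at $z^{5/2}$ rather than $z^{1/2}$ or $z^{3/2}$; the latter exponents are excluded a priori because the power series $F_0$ extends analytically up to $z\!=\!x_0$ along the real left-hand side. All remaining coefficients $a_d$ with $d\!\ge\!5$ are then determined recursively from $a_0,a_2,a_4$ by a linear recursion, and convergence of $\sum_{d\ge5}a_dz^{d/2}$ on some $B_\de(0)$ follows from standard Frobenius theory for regular singular points.

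Finally, because $F_0(x_0+z)$ is real on the interval $(-\de,0)$ and the branch convention $\Im(z^{1/2})\!\ge\!0$ on $\C\!-\!\R^+$ renders $z^{d/2}$ real for even $d$ and purely imaginary for odd $d$ on the negative real axis, matching real and imaginary parts of~\eref{FIexp_e} forces $a_{2d}\!\in\!\R$ and $a_{2d+1}\!\in\!\fI\R$. The sign $a_5\!\in\!\fI\R^-$ then follows from a Darboux/transfer argument: the coefficient of $z^{5/2}$ in~\eref{FIexp_e} controls the leading $d^{-7/2}b^d$-asymptotics of $n_{0,d}$, and the positivity of these numbers together with the branch convention pin down the sign. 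The main obstacle is the Frobenius analysis itself: extracting the ODE of Step~1 in a tractable form, computing its indicial polynomial at $x_0$, and verifying that resonance produces exactly the exponent $5/2$ (rather than some arbitrary irrational value) is where the real work lies.
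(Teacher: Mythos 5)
Your first step (converting Kontsevich's recursion \eref{P2g0dfn_e} into the third-order ODE \eref{F0cond_e} and locating the real singularity $x_0$ from the positivity of the $n_{0,d}$ and the bounds of Corollary~\ref{FIbnd_crl}) matches the paper, and the reality/imaginarity of the $a_{2d}$ and $a_{2d+1}$ follows as you say once the expansion is in hand. But the core of your argument has a genuine gap. The equation \eref{F0cond_e} is \emph{nonlinear}, and $x_0$ is not a regular singular point of it in the classical sense: the coefficients of the ODE are constants, and the degeneration at $x_0$ is a \emph{movable} singularity caused by the solution itself, via the identity $3F_0''(x_0)-2F_0'(x_0)=9$ in \eref{F0bound_e3}. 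There is therefore no indicial polynomial to compute and no ``standard Frobenius theory for regular singular points'' to invoke for convergence. What the paper actually does (Lemma~\ref{Frobexp_lmm}) is substitute the half-integer ansatz, derive the explicit recursions \eref{arec_e1}--\eref{arec_e2} — which force $a_1=a_3=0$, $9+2a_2-6a_4=0$, and determine $a_5^2$ as a negative multiple of $4a_2^2+45a_2+18a_0+567$, whence $a_5\in\fI\R$ — and then prove convergence by hand with a majorant sequence controlled by Lemma~\ref{rec_lmm} and Stirling's bound \eref{Stirling_e}. Your appeal to resonance producing ``exactly the exponent $5/2$'' is replaced in the paper by this explicit computation, and your a priori exclusion of the exponents $1/2$ and $3/2$ on the grounds that ``$F_0$ extends analytically up to $z=x_0$'' is false as stated: only $F_0,F_0',F_0''$ converge at $x_0$ while $F_0'''$ diverges, so there is no analytic extension to $x_0$.

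The more serious omission is the identification step. Lemma~\ref{Frobexp_lmm} produces a \emph{two-parameter} family $F_{a_0,a_2}$ of local solutions admitting the expansion \eref{FIexp_e}, inside the three-parameter family of all local solutions of \eref{F0cond_e}; nothing in your proposal shows that the actual generating function $F_0$ is one of them. This is precisely the gap \ref{F0exp_it} in~\cite{FI} that Proposition~\ref{FIexp_prp} exists to close, and it is where most of Section~\ref{expand_subs} is spent: the comparison Lemma~\ref{F0uniq_lmm} and Corollary~\ref{F0uniq_crl} exploit the monotonicity \eref{F0cond_e2} to show that two solutions with matching zeroth and first derivatives and blowing-up third derivative cannot have distinct second derivatives, and the invariance-of-domain argument of Lemma~\ref{homeom_lmm} and Corollary~\ref{homeom_crl} then shows that any solution $G$ satisfying \eref{F0cond_e2} with $G'''(x)\to\infty$ as $x\to x_0^-$ — in particular $F_0$ — must coincide with $F_{a_0,a_2}$ for $a_0=G(x_0)$, $a_2=G'(x_0^-)$. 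Without some substitute for this uniqueness argument, writing down a formal solution with the right expansion does not prove that $F_0$ has one.
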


\noindent
The first statement of this proposition is immediate from~\eref{P2asymp_e0}.
Furthermore, $e^{x_0}b_+\!=\!1$.\\

\noindent
Since $n_{0,d}\!\in\!\R^+$ for all~$d$, there is no neighborhood of $z\!=\!x_0$
on (all of) which this series converges; otherwise, every point $z_0$ with 
$\Re z_0\!=\!x_0$ would have such a neighborhood.
By \eref{P2g0dfn_e},
\BE{F0cond_e}(9+2F_0'-3F_0'')F_0'''=2F_0-11F_0'+18F_0''+(F_0'')^2\,;\EE
there is a sign typo in \cite[(2.55)]{FI}, which is corrected in \cite[(2.57)]{FI}.
Since $n_{0,d}\!>\!0$ for all $d\!\in\!\Z^+$,
\BE{F0cond_e2} 0<F_0(z)<F_0'(z)<F_0''(z)<F_0'''(z) \qquad \forall~z\!\in\!(-\i,x_0)\,.\EE
Combined with~\eref{F0cond_e}, this implies that 
\BE{F0bound_e}\begin{split}
&\quad 3F_0''(z)-2F_0'(z)<9 \qquad\forall\,z\!\in\!(-\i,x_0), \\
& 2F_0(x_0)-11F_0'(x_0)+18F_0''(x_0)+F_0''(x_0)^2>0.
\end{split}\EE
By~\eref{F0cond_e2} and the first statement in~\eref{F0bound_e},
the series for $F_0$, $F_0'$, and $F_0''$ converge at $z\!=\!x_0$.
Along with~\eref{F0cond_e}, this implies~that 
\BE{F0bound_e3} 3F_0''(x_0)-2F_0'(x_0)=9\,;\EE
otherwise, \eref{F0cond_e} could be used to compute all derivatives of $F_0$ at $z\!=\!x_0$
and $F_0$ could be extended on a neighborhood of~$x_0$.\\

\noindent
Since the power series for $F_0''$ converges at~$z\!=\!x_0$ and 
the power series for~$F_0'''$ does not converge,
$$\limsup_{d\lra\i} \frac{\ln n_{0,d}-d\ln{b_+}}{\ln d}\in [-4,-3].$$
According to \cite[p170]{FI},  this also implies that $F_0$ admits an expansion
around $z\!=\!x_0$ of the~form
\BE{F0exp_e} F_0(x_0\!+\!z)=c_0+c_1z+\frac{c_2z^2}{2}+\la z^{2+\al}+\ldots\EE
for some $\al\!\in\!(0,1)$.
By~\eref{F0exp_e} and~\eref{F0cond_e}, 
$$\big((9\!+\!2c_1\!-\!3c_2)-3\la(1\!+\!\al)(2\!+\!\al)z^{\al}\big)
\cdot\la\al(1\!+\!\al)(2\!+\!\al)z^{\al-1}
=2c_0-11c_1+18c_2+c_2^2+o(1).$$
Along with~\eref{F0bound_e3}, this gives
\BE{F0coeff_e} 9\!+\!2c_1\!-\!3c_2=0,\quad 2\al\!-\!1=0,\quad
-3\la\al(1\!+\!\al)^2(2\!+\!\al)^2=2c_0\!-\!11c_1\!+\!18c_2\!+\!c_2^2\,,\EE
and implies~\eref{FIexp_e}.
However, the existence of the expansion~\eref{F0exp_e} does not follow 
just from the convergence of $F_0''$ at~$x_0$ and 
the non-convergence of~$F_0'''$ there.
In Section~\ref{expand_subs}, we justify~\eref{FIexp_e} bypassing~\eref{F0exp_e}.\\

\noindent
According to \cite[p170]{FI}, \eref{FIexp_e} {\it corresponds to}  
the $g\!=\!0$ case of Conjecture~\ref{P2_cnj}.
However, \eref{FIexp_e} by itself can  describe {\it at most} a suitable $\limsup$.
It does not imply even the genus~0 case of Corollary~\ref{TWexp_crl}.
For example, replacing the numbers~$n_{0;d}$ in~\eref{F0dfn_e}  by the numbers
$$n_{0,d}'=\begin{cases}n_{0,d/2},&\hbox{if}~d\!\in\!2\Z;\\
0,&\hbox{if}~d\!\not\in\!2\Z;\end{cases}$$
would break the validity of the first equality in~\eref{TWexpcrl_e}
without affecting the validity of the conclusion of Proposition~\ref{FIexp_prp}.\\

\noindent
{\it Mathematica} suggests that the numbers on the left-hand side of~\eref{TWexpcrl_e}
are increasing (after the first few terms), but it is not clear how this can be proved.
In light of Kontsevich's recursion~\eref{P2g0dfn_e}, this could be a special case
of Conjecture~\ref{asympgrowth_cnj}.
Combined with the conclusion of Conjecture~\ref{asympgrowth_cnj} 
for the  numbers~$n_{0,d}$ given by~\eref{P2g0dfn_e},
Proposition~\ref{FIexp_prp} 
would at least imply  the first statement of Corollary~\ref{TWexp_crl}.

\begin{crl}\label{F1_crl}
Let $x_0\!\in\!\R^+$ be as in Proposition~\ref{FIexp_prp}.
The power series~$F_1$ converges on $\C_{x_0}^<$ and diverges outside of~$\C_{x_0}^{\le}$.
Furthermore, there~exist $\de\!\in\!\R^+$ and 
$b_{2d}\!\in\!\R$ and $b_{2d+1}\!\in\!\fI\R$
with $d\!\in\!\Z$, $d\!\ge\!-1$, such~that
\BE{F1exp_e}F_1'(x_0\!+\!z)=
-\frac{1}{48z}+\sum_{d=-1}^{\i}\!b_dz^{d/2}\EE
for all $z\!\in\!B_{\de}(0)$ with $\Re(z)\!\le\!0$.
\end{crl}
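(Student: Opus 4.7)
The plan is to derive the expansion \eref{F1exp_e} from the Eguchi-Hori-Xiong recursion \eref{P2g1dfn_e} and Proposition~\ref{FIexp_prp}, and then to use the expansion together with positivity of $n_{1,d}$ to pin down the convergence half-plane. I would first rewrite \eref{P2g1dfn_e} as a closed-form functional identity involving $F_1, F_1'$ and the genus-$0$ series $F_0, F_0', F_0'', F_0'''$, analogous to the identity~\eref{F0cond_e} for~$F_0$. Bounding $n_{1,d}$ by the sums of products of $n_{0,*}$ values appearing in the recursion and comparing with Corollary~\ref{FIbnd_crl} shows $\limsup_d n_{1,d}^{1/d}\!\le\!e^{-x_0}$, which yields the convergence of $F_1$ on $\C_{x_0}^<$.

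For the expansion, I would substitute~\eref{FIexp_e} into this identity. Writing $c_j\!=\!F_0^{(j)}(x_0)$ for $j\!=\!0,1,2$, the relation $9\!+\!2c_1\!-\!3c_2\!=\!0$ holds by~\eref{F0bound_e3}; differentiating~\eref{FIexp_e} term-by-term gives $F_0'''(x_0\!+\!z)\!=\!\tfrac{15}{8}a_5 z^{-1/2}\!+\!O(z^{1/2})$, so $F_0'''$ carries a Puiseux singularity of order $z^{-1/2}$ at $z\!=\!0$ while $F_0, F_0', F_0''$ remain analytic. Solving the functional identity for $F_1'$, with the vanishing of the appropriate denominator controlled by~\eref{F0bound_e3}, one obtains a Puiseux expansion of $F_1'(x_0\!+\!z)$ in~$z^{1/2}$ whose leading behavior is a simple pole in~$z$ followed by a $z^{-1/2}$ term and then a power series in~$z^{1/2}$, matching the form of~\eref{F1exp_e}. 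The reality/imaginarity alternation of the $b_d$ is inherited from that of the $a_d$ because the EHX recursion has real coefficients and $z^{1/2}\!\in\!\fI\R^+$ for $z\!\in\!\R^-$.

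The main obstacle is verifying that the residue of the pole equals exactly $-\tfrac{1}{48}$. This reduces to an arithmetic identity among the coefficients of the EHX recursion, the constants $(c_0,c_1,c_2,a_5)$ constrained by~\eref{F0coeff_e}, and the structural relation~\eref{F0bound_e3}; one expects substantial cancellation so that the answer becomes independent of the specific value of~$a_5$, with $-\tfrac{1}{48}$ emerging as a universal constant of the recursion rather than of the target. This identity is essentially the content of Lemma~\ref{TW_lmm0}. Once the expansion is established, divergence of $F_1$ outside $\C_{x_0}^{\le}$ follows by Pringsheim's theorem: the pole of $F_1'$ at the real point $z\!=\!x_0$ forces a singularity on the positive real axis of the $q\!=\!e^z$ disk of convergence, and positivity of the $n_{1,d}$ then forces the radius of convergence to equal exactly $e^{x_0}$.
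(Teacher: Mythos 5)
Your route is the paper's own: derive from the Eguchi--Hori--Xiong recursion \eref{P2g1dfn_e} the identity $(9+2F_0'-3F_0'')F_1'=\tfrac18(F_0'''-3F_0''+2F_0')$ (this is \eref{F1cond_e}), substitute the Puiseux expansion \eref{FIexp_e}, and divide. Two points need repair. The more important one: the residue $-\tfrac1{48}$ is \emph{not} ``essentially the content of Lemma~\ref{TW_lmm0}'' --- that lemma asserts the non-vanishing of $9+2F_0'-3F_0''$ at the points $x_0+\fI y_0$ with $y_0\in\R-2\pi\Z$ and plays no role in this corollary (it is needed later, for Lemma~\ref{TW_lmm}). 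The residue is a two-line computation you should actually carry out: by \eref{FIexp_e} and \eref{F0bound_e3} (equivalently $9+2a_2-6a_4=0$), the denominator is $-\tfrac{45}{4}a_5z^{1/2}+O(z)$ while the numerator is $\tfrac18\cdot\tfrac{15}{8}a_5z^{-1/2}+O(1)$, so the pole is $\tfrac{15/64}{-45/4}\,z^{-1}=-\tfrac1{48}z^{-1}$, with $a_5$ cancelling exactly as you predicted; note that the division requires $a_5\neq0$, which Proposition~\ref{FIexp_prp} supplies via $a_5\in\fI\R^-$.

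The second soft spot is the convergence claim. Corollary~\ref{FIbnd_crl} only gives $n_{0,d}\le\tfrac{45}{16}(4/15)^dd^{-7/2}$ with $4/15\ge e^{-x_0}$, so ``comparing with Corollary~\ref{FIbnd_crl}'' yields convergence of $F_1$ only on a possibly smaller half-plane, not on all of $\C_{x_0}^<$. Worse, an induction at the level of exponential growth rates $(e^{-x_0}+\ep)^d$ does not close: writing the recursion as $dn_{1,d}=\tfrac{d(d-1)(d-2)}{216}n_{0,d}+\tfrac1{27}\sum(3d_0^2-2d_0)n_{0,d_0}\,(d_1n_{1,d_1})$, the kernel sums to $\tfrac19\bigl(3F_0''(x)-2F_0'(x)\bigr)$, which tends to $1$ as $x\to x_0^-$ by \eref{F0bound_e3}. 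What saves the argument is the strict inequality $3F_0''(x)-2F_0'(x)<9$ for each $x<x_0$ (the first line of \eref{F0bound_e}) together with the positivity of all coefficients: at each fixed real $x<x_0$ the linear recursion is a strict contraction, so the partial sums of $F_1'(x)$ are bounded by $\tfrac18(F_0'''-3F_0''+2F_0')(x)\big/\bigl(1-\tfrac19(3F_0''-2F_0')(x)\bigr)$, which is finite by the first statement of Proposition~\ref{FIexp_prp}. This is what the paper's one-line deduction of convergence from \eref{F1cond_e} amounts to. Your Pringsheim/positivity argument for divergence on $\Re z>x_0$ is fine and is equivalent to the paper's observation that $n_{1,d}\ge\tfrac{(d-1)(d-2)}{216}n_{0,d}$ gives the opposite inequality for the $\limsup$.
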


\begin{proof} By \cite[(8)]{Pand99},
\BE{P2g1dfn_e}
n_{1,d}=\frac{(d\!-\!1)(d\!-\!2)}{216}n_{0,d}+\frac{1}{27d}
\sum_{\begin{subarray}{c}d_0+d_1=d\\ d_0,d_1\ge1\end{subarray}}\!\!\!\!
(3d_0^2\!-\!2d_0)d_1\, n_{0,d_0}n_{1,d_1}\,.\EE
This implies that 
\BE{F1cond_e} (9+2F_0'-3F_0'')F_1'=
\frac18\big(F_0'''-3F_0''+2F_0').\EE
By~\eref{F1cond_e} and the first statement of Proposition~\ref{FIexp_prp},  
the series~$F_1$ converges on $\C_{x_0}^<$.\\

\noindent
By~\eref{FIexp_e} and~\eref{F0bound_e3}, 
\BE{FOvan_e}\begin{split}
9+2F_0'(x_0\!+\!z)-3F_0''(x_0\!+\!z)&=
z^{1/2}\sum_{d=0}^{\i}\frac{d\!+\!3}{4}\big(4a_{d+3}-3(d\!+\!5)a_{d+5}\big)z^{d/2},\\
F_0'''(x_0\!+\!z)&=z^{-1/2}\sum_{d=0}^{\i}\frac{(d\!+\!1)(d\!+\!3)(d\!+\!5)}{8}a_{d+5}z^{d/2},
\end{split}\EE
with $a_3\!\equiv\!0$.
Along with~\eref{F1cond_e}, this implies~\eref{F1exp_e}.
\end{proof}

\noindent
Since the power series~$F_1$ converges for $z\!\in\!\C_{x_0}^<$,
$$\limsup_{d\lra\i}\sqrt[d]{n_{1,d}}
\le e^{-x_0}=\limsup_{d\lra\i}\sqrt[d]{n_{0,d}}\,.$$
The opposite inequality follows directly from \eref{P2g1dfn_e}; it also holds for $\liminf$.
Thus, the existence of $b\!\in\!\R^+$ such that the first equality in~\eref{TWexpcrl_e} holds
implies that the second equality also holds.

\subsection{The observations in \cite{TW}}
\label{TW_subs}

\noindent
We now describe the statements in~\cite{TW} that have completed 
the proof of the $g\!=\!0$ case of Conjecture~\ref{P2_cnj}, 
initiated in~\cite{FI} and continued in~\cite{AsympGWnotes},
and have extended it to the $g\!=\!1$ case.\\

\noindent
Since the functions~\eref{F0dfn_e} are $2\pi\fI$-periodic, 
they do not extend analytically over a neighborhood of \hbox{$x_0\!+\!2\pi k\fI$} 
for any $k\!\in\!\Z$.
By Lemma~\ref{TW_lmm0} below,  they extend analytically around all other points of
the vertical line $\Re z\!=\!x_0$ in~$\C$.
The two asymptotic expansions of Theorem~\ref{TW_thm} then follow from 
Lemma~\ref{TW_lmm},  Proposition~\ref{FIexp_prp}, and Corollary~\ref{F1_crl}.

\begin{lmm}[{\cite[Lemma~3.1]{TW}}]\label{TW_lmm0}
Let $F_0$ and~$F_1$ be as in~\eref{F0dfn_e} and
$x_0$ be as in Proposition~\ref{FIexp_prp}.
Then
\BE{TWlmm0_e} 3F_0''\big(x_0\!+\!\fI y_0\big)-2F_0'\big(x_0\!+\!\fI y_0\big)\neq9 
\qquad\forall~y_0\!\in\!\R\!-\!2\pi\Z\,.\EE
Thus, the functions~$F_0$ and $F_1$ extend analytically over a neighborhood of 
every point $x_0\!+\!\fI y_0$ with \hbox{$y_0\!\in\!\R\!-\!2\pi\Z$}.
\end{lmm}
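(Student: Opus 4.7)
The plan is to convert \eref{TWlmm0_e} into a positivity statement for a Fourier series with strictly positive coefficients, and then to feed the resulting nonvanishing of $g(z):=9+2F_0'(z)-3F_0''(z)$ into the cubic ODE \eref{F0cond_e} to analytically continue $F_0$ across the boundary of $\C_{x_0}^<$; the extension of $F_1$ then follows from \eref{F1cond_e}.

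First, termwise differentiating $F_0(z)=\tfrac{1}{3}\sum_{d\ge 1}n_{0,d}e^{dz}$ and regrouping,
$$3F_0''(x_0+\fI y)-2F_0'(x_0+\fI y)=\sum_{d\ge 1}c_d\,e^{\fI dy},\qquad c_d:=\tfrac{1}{3}d(3d-2)n_{0,d}e^{dx_0}.$$
Each $c_d>0$, since $n_{0,d}>0$ and $d(3d-2)\ge 1$ for $d\ge 1$, and the series converges absolutely on $\R$ because $F_0''$ converges at $z=x_0$ by Section~\ref{FI_subs}. Evaluating at $y=0$ and invoking \eref{F0bound_e3}, $\sum_{d\ge 1}c_d=9$. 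If \eref{TWlmm0_e} failed for some $y_0\in\R$, then taking real parts of $\sum c_d(e^{\fI dy_0}-1)=0$ would yield
$$\sum_{d\ge 1}c_d\bigl(1-\cos(dy_0)\bigr)=0;$$
every summand is non-negative, so $\cos(dy_0)=1$ for every $d\ge 1$, and the $d=1$ case alone gives $y_0\in 2\pi\Z$, contradicting $y_0\in\R\!-\!2\pi\Z$.

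Next, I would propagate analyticity across the boundary. The series for $F_0, F_0', F_0''$ all converge absolutely on $\C_{x_0}^{\le}$, so each extends continuously up to the boundary line. Fix $z_0:=x_0+\fI y_0$ with $y_0\in\R\!-\!2\pi\Z$; by the first step $g(z_0)\ne 0$. Rewriting \eref{F0cond_e} as
$$F_0'''=\bigl(2F_0-11F_0'+18F_0''+(F_0'')^2\bigr)\big/g,$$
the right-hand side is a rational function holomorphic in its arguments near $\bigl(z_0, F_0(z_0), F_0'(z_0), F_0''(z_0)\bigr)$. The Cauchy existence theorem for holomorphic ODEs then supplies a radius $\eta>0$ such that, for every $z_1$ near $z_0$, the initial-value problem for \eref{F0cond_e} anchored at $z_1$ with data $\bigl(F_0(z_1), F_0'(z_1), F_0''(z_1)\bigr)$ has a unique holomorphic solution on $B_\eta(z_1)$. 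Choosing $z_1\in\C_{x_0}^<$ with $|z_1-z_0|<\eta$, uniqueness forces this solution to coincide with $F_0$ on $B_\eta(z_1)\cap\C_{x_0}^<$, so it provides an analytic continuation of $F_0$ to a disc about $z_0$. Once $F_0$ is holomorphic in a neighborhood of $z_0$, \eref{F1cond_e} rearranges to $F_1'=(F_0'''-3F_0''+2F_0')/(8g)$, holomorphic there, and antidifferentiation yields the analytic extension of $F_1$.

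The main technical point is obtaining a uniform existence radius $\eta$ independent of the anchor point $z_1$ as $z_1\to z_0$. This follows from the standard quantitative estimates for analytic ODEs: since $g(z_0)\ne 0$ and $F_0, F_0', F_0''$ extend continuously to $z_0$, the rational function on the right-hand side and its partial derivatives are uniformly bounded on a fixed polydisc neighborhood of $\bigl(z_0, F_0(z_0), F_0'(z_0), F_0''(z_0)\bigr)$ in the ambient parameter space, and the analytic Cauchy-Kovalevskaya theorem produces solutions of a size controlled only by these universal bounds.
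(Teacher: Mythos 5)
Your proposal is correct and follows essentially the same route as the paper: the nonvanishing \eref{TWlmm0_e} comes from viewing $3F_0''-2F_0'$ on the line $\Re z\!=\!x_0$ as a series with strictly positive coefficients summing to $9$ by \eref{F0bound_e3} (so $\cos(dy_0)\!<\!1$ for $d\!=\!1$ already forces strict inequality), and the analytic extension of $F_0$ and $F_1$ then comes from solving \eref{F0cond_e} and \eref{F1cond_e} near $x_0\!+\!\fI y_0$. Your second step merely spells out, via the holomorphic Cauchy existence/uniqueness theorem with anchor points $z_1\!\in\!\C_{x_0}^<$ and a uniform radius, what the paper asserts in one sentence.
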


\begin{proof}
The first statement follows from $n_{0,d}\!>\!0$ and~\eref{F0bound_e3}, since
\begin{equation*}\begin{split}
\Re\Big(3F_0''\big(x_0\!+\!\fI y_0\big)-2F_0'\big(x_0\!+\!\fI y_0\big)\Big)
&=\sum_{d=1}^{\i}\big(3d\!-\!2)d\,n_{0,d}e^{dx_0}\cos(dy_0)\\
&<\sum_{d=1}^{\i}\big(3d\!-\!2)d\,n_{0,d}e^{dx_0}
=3F_0''(x_0)-2F_0'(x_0)=9\,.
\end{split}\end{equation*}
By~\eref{TWlmm0_e}, \eref{F0cond_e} and~\eref{F1cond_e} can used to compute 
all derivatives of $F_0$ and $F_1$ at $z_0\!\equiv\!x_0\!+\!\fI y_0$ and 
to extend $F_0$ and~$F_1$ around~$z_0$.
\end{proof}

\noindent
The remaining part of \cite[Lemma~3.1]{TW} is equivalent to Proposition~\ref{FIexp_prp},
established 5~years earlier in~\cite{AsympGWnotes}.
The proof in~\cite{TW} provides an alternative argument for Proposition~\ref{FIexp_prp};
this argument is somewhat shorter in length than Section~\ref{expand_subs},
but is more ad hoc and does not include recursions for the coefficients~$a_d$ 
in~\eref{FIexp_e}.\\

\noindent 
The crucial observation of~\cite{TW} is that the asymptotic behavior of
the coefficients~$n_d$ of a generating series~$F$ as in~\eref{TWlmm_e0} below
is described by expansions around~$x_0$ {\it if} $F$ has no additional singular 
points on the vertical line $\Re z\!=\!x_0$.
This observation is reformulated in greater generality as Lemma~\ref{TW_lmm} below,
which appears similar to some asymptotic analysis statements in combinatorics.
Two special cases of this lemma are considered in~\cite{TW} and treated 
separately, but the argument in the second case in~\cite{TW}
applies to the general case of Lemma~\ref{TW_lmm} without any changes of substance.\\

\noindent
For $k\!\in\!\R^+$,  let
$$\Ga(k)\equiv \int_0^{\i}\!t^{k-1}e^{-t}\d t$$
denote the value of the $\Ga$ function at $k$.

\begin{lmm}\label{TW_lmm}
Let $x_0\!\in\!\R$, $\de_0\!\in\!\R^+$,
$n_d\!\in\!\C$ for  $d\!\in\!\Z^+$, and $a_d\!\in\!\C$ for $2d\!\in\!\Z$.
If the power series
\BE{TWlmm_e0} F(z) \equiv \sum_{d=1}^{\i}n_d e^{dz}, \qquad z\in\C,\EE
converges on~$\C_{x_0}^<$, 
extends analytically over a neighborhood of  every point 
$x_0\!+\!\fI y_0$ with $y_0\!\in\!\R\!-\!2\pi\Z$, and satisfies
$$F(x_0\!+\!z)=\sum_{\begin{subarray}{c}2k\in \Z\\ -1\le k \end{subarray}}
\!\!a_kz^k$$
for all $z\!\in\!B_{\de_0}(0)$ with $\Re(z)\!<\!0$,
then for each $N\!\in\!\Z$ there exists $C_N\!\in\!\R$ such~that
\BE{TWlmm_e} 
\bigg|n_d -e^{-dx_0}\bigg(\!\!-a_{-1}+\frac{1}{\pi\fI} \!\!\!\!\!
\sum_{\begin{subarray}{c}2k\in \Z-2\Z\\ -1< k<N-1\end{subarray}}
\!\!\!\!\!\!\!\!\!a_k \Ga\big(k\!+\!1\big)d^{-k-1}\bigg)\bigg|\le 
C_N e^{-dx_0} d^{-N-\frac12} \quad\forall\,d\!\in\!\Z^+\,.\EE  
\end{lmm}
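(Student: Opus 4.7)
I plan to use singularity analysis in the Flajolet--Odlyzko style. By Fourier inversion of the $2\pi\fI$-periodic function $F$, we have
\[ n_d = \frac{1}{2\pi\fI}\int_\gamma F(z) e^{-dz} dz \]
for any contour $\gamma$ going from $z_0 - \fI\pi$ to $z_0 + \fI\pi$ inside the convergence region $\C_{x_0}^<$. Using the analytic continuation hypothesis combined with a compactness argument on the compact set $\{x_0 + \fI y : \alpha \le |y| \le \pi\}$ for some small $\alpha>0$, we extend $F$ analytically to a uniform one-sided neighborhood of this set. Hence $\gamma$ deforms into a Hankel-type contour $\gamma_d$ consisting of (a)~two vertical \emph{bulk} segments at $\Re z = x_0 + \ep_0$ for a fixed small $\ep_0 \in (0, \de_0)$, covering $|\Im z| \ge \alpha$, and (b)~a \emph{detour} hugging the branch cut $[x_0, x_0 + \de_0)$ of the singular expansion, consisting of segments slightly above and below the cut connected by a small loop of radius $\sim 1/d$ around $z = x_0$.

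The bulk contribution is $O(e^{-d\ep_0} e^{-dx_0})$, exponentially small in $d$ and so absorbed into the error. For the detour, substitute the singular expansion $F(x_0 + w) = \sum_{-1 \le k < N-1} a_k w^k + R_N(w)$. After the change of variable $u = dw$, each series term yields a classical Hankel-type integral $\int_{\mathcal H_d} u^k e^{-u} du$ over the scaled detour $\mathcal H_d$, which tends to a standard Hankel contour $\mathcal H$ as $d \to \infty$ (with errors exponentially small, since $e^{-u}$ decays along the parts being extended to infinity). Evaluating via Hankel's representation of $1/\Gamma$: the $k = -1$ term contributes $-a_{-1} e^{-dx_0}$ from the residue at the simple pole; each half-odd-integer term $a_k w^k$ contributes $\tfrac{a_k \Gamma(k+1)}{\pi\fI} e^{-dx_0} d^{-k-1}$; and terms with $k$ a non-negative integer give $\int_\mathcal{H} u^k e^{-u} du = 0$ because the integrand is entire and the Hankel contour closes up at $+\infty$.

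For the remainder $R_N(w)$, Cauchy estimates on the Laurent series of $F(x_0 + w)$ in $w^{1/2}$ yield $|R_N(w)| \le C_N |w|^{N-1}$ for small $|w|$, with the leading (integer) order contributing only exponentially small endpoint terms (its contour integral is essentially that of an entire function times $e^{-dw}$, reducing to endpoint differences at $w = \ep_0 \pm \fI\pi$), and the half-odd-integer part $|R_N^{\textnormal{half}}(w)| = O(|w|^{N-1/2})$ contributing $O(d^{-N-1/2} e^{-dx_0})$ via the estimate $\int_0^{\ep_0} s^{N-1/2} e^{-ds\cos\phi} ds = O(d^{-N-1/2})$ on the transition segments at angle $\pm\phi$ from the cut, together with $|e^{-dw}| \le e$ on the inner loop. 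Summing all the contributions yields the claimed estimate.

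The main obstacle is verifying the orientation and branch conventions in the Hankel integral evaluation: one must confirm that the scaled detour, inherited from the counterclockwise Cauchy contour around $q = 0$ in the $q = e^z$ parametrization, corresponds to the clockwise Hankel loop around $u = 0$ (going from below the cut through the left half-plane back to above the cut), which is precisely what produces the signs $-a_{-1}$ and $+\tfrac{a_k \Gamma(k+1)}{\pi\fI}$ in the formula.
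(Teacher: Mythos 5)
Your proposal is correct and follows essentially the same route as the paper's proof: Cauchy/Fourier inversion over one period, deformation of the contour onto an exponentially negligible bulk piece plus a loop hugging the singularity at $x_0$, with the pole giving $-a_{-1}$, the half-odd powers giving the $\Ga(k\!+\!1)d^{-k-1}$ terms, the integer powers cancelling, and the tail giving the $d^{-N-\frac12}$ error. The only differences are cosmetic: the paper works in the strip $\Im z\in[0,2\pi]$ so the two sides of your branch cut appear as horizontal segments at heights $0$ and $2\pi$, and it evaluates the cut integrals via incomplete Gamma functions $\Ga_{d;\de}$ with a radius $\ep\!\to\!0$ rather than via a rescaled Hankel contour of radius $\sim\!1/d$ and Hankel's formula for $1/\Ga$.
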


\begin{proof}[{\it Proof {\cite[pp8-11]{TW}}}]
For $d\!\in\!\Z^+$ and $k,\de\!\in\!\R^+$,  let
\BE{TWlmm_e1a}\Ga_{d;\de}(k)\equiv\int_0^{\de}\!t^{k-1}e^{-dt}\d t
=d^{-k}\!\!\int_0^{d\de}\!t^{k-1}e^{-t}\d t< d^{-k}\Ga(k)\,.\EE
If $0\!<\!N\!\le\!k$, then
\BE{TWlmm_e1c} 0< \Ga_{d;\de}(k) \le \de^{k-N}\Ga_{d;\de}(N)
< \de^{k-N}d^{-N}\Ga(N).\EE
If $N\!\ge\!k$, then
\BE{TWlmm_e1d}
0<\Ga(k)-d^k\Ga_{d;\de}(k)=\int_{d\de}^{\i}\!t^{k-1}e^{-t}\d t
\le (d\de)^{k-N}\!\!\!\int_{d\de}^{\i}\!t^{N-1}e^{-t}\d t
< (d\de)^{k-N}\Ga(N).\EE

\vspace{.2in}

\noindent
The assumptions on~$F$ imply that there exists $\de\!\in\!(0,\de_0)$ so that $F$ 
extends analytically over the~region 
$$\big\{x\!+\!\fI y\!:x\!\in\![x_0,x_0\!+\!2\de],~y\!\in\![0,2\pi]\big\}
-\big\{x_0,x_0\!+\!2\pi\fI\big\}\subset \C$$
with
\BE{TWlmm_e1}\begin{aligned}
F\big(x_0\!+\!re^{\fI\th}\big)&=
\sum_{\begin{subarray}{c}2k\in \Z\\ -1\le k \end{subarray}}\!\!a_k r^ke^{\fI k\th}
&\qquad &\forall~(r,\th)\!\in\!(0,2\de)\!\times\![0,\pi],\\
F\big(x_0\!+\!2\pi\fI\!+\!re^{\fI\th}\big)&=
\sum_{\begin{subarray}{c}2k\in \Z\\ -1\le k \end{subarray}}\!\!a_k r^ke^{\fI k\th}
&\qquad &\forall~(r,\th)\!\in\!(0,2\de)\!\times\![\pi,2\pi].
\end{aligned}\EE
For $\ep\!\in\!(0,\de)$, define oriented curves in $\C$ by 
\begin{alignat*}{3}
\cC_{\ep;-}^{\v}&=\big\{x_0\!-\!\ep\!+\!\fI t\!:t\!\in\![0,2\pi]\big\}, &~~
\cC_{\ep;-}^{\h}&=\big\{x_0\!+\!t\!:t\!\in\![\ep,\de]\big\}, &~~
\cC_{\ep;-}^{\circ}&=\big\{x_0\!+\!\ep e^{\fI t}\!:t\!\in\![0,\pi]\big\}, \\
\cC_+^{\v}&=\big\{x_0\!+\!\de\!+\!\fI t\!:t\!\in\![0,2\pi]\big\}, &~~
\cC_{\ep;+}^{\h}&=\big\{x_0\!+\!t\!+\!2\pi\fI\!:t\!\in\![\ep,\de]\big\},&~~
\cC_{\ep;+}^{\circ}&=\big\{x_0\!+\!2\pi\fI\!+\!\ep e^{\fI t}\!:t\!\in\![\pi,2\pi]\big\};
\end{alignat*}
see Figure~\ref{TW_fig}.
By~\eref{TWlmm_e1}, there exist $C_{\de}^h,C_{\de}^{\circ}\!\in\!\R$ such that 
\begin{gather}
\label{TWlmm_e2a}
\sum_{\begin{subarray}{c}2k\in \Z-2\Z\\ -1< k \end{subarray}}
\!\!\!\!\!\!\!|a_k|t^k\le C_{\de}^ht^{-1/2}
\quad\forall~t\!\in\!(0,\de],\\
\label{TWlmm_e2b}
\big|F(z)e^{-dz}-a_{-1}\big(z\!-\!x_0\!-\!(1\!\pm\!1)\pi\fI\big)^{-1}e^{-dx_0}\big|\le 
C_{\de}^{\circ}e^{-dx_0}\ep^{-1/2}
\quad\forall~z\!\in\!\cC_{\ep;\pm}^{\circ},\,\ep\!\in\!(0,\de),\,d\!\in\!\Z^+.
\end{gather}

\vspace{.2in}

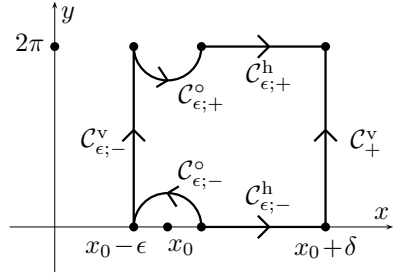
\begin{figure}
\begin{pspicture}(0,-.5)(10,3.2)
\psset{unit=.3cm}
\psline[linewidth=.03]{->}(18,0)(35,0)\psline[linewidth=.03]{->}(20,-2)(20,10)
\psarc[linewidth=.1](25,0){1.5}{0}{180}\psarc[linewidth=.1](25,8){1.5}{180}{360}
\psline[linewidth=.1](23.5,0)(23.5,8)\psline[linewidth=.1](32,0)(32,8)
\psline[linewidth=.1](32,0)(26.5,0)\psline[linewidth=.1](32,8)(26.5,8)
\pscircle*(25,0){.2}\rput(25.6,-.9){\sm{$x_0$}}
\pscircle*(20,8){.2}\rput(18.9,8.2){\sm{$2\pi$}}
\pscircle*(23.5,0){.2}\rput(22.7,-.9){\sm{$x_0\!-\!\ep$}}
\pscircle*(32,0){.2}\rput(32,-.9){\sm{$x_0\!+\!\de$}}
\pscircle*(26.5,0){.2}\pscircle*(23.5,8){.2}\pscircle*(26.5,8){.2}\pscircle*(32,8){.2}
\psline[linewidth=.1](29.5,0)(29,-.5)\psline[linewidth=.1](29.5,0)(29,.5)
\psline[linewidth=.1](29.5,8)(29,7.5)\psline[linewidth=.1](29.5,8)(29,8.5)
\psline[linewidth=.1](23.5,4.3)(23,3.8)\psline[linewidth=.1](23.5,4.3)(24,3.8)
\psline[linewidth=.1](32,4.3)(31.5,3.8)\psline[linewidth=.1](32,4.3)(32.5,3.8)
\psline[linewidth=.1](25,1.5)(25.5,1.8)\psline[linewidth=.1](25,1.5)(25.4,.9)
\psline[linewidth=.1](25,6.5)(24.5,6.2)\psline[linewidth=.1](25,6.5)(24.6,7.1)
\rput(29.5,1.3){\sm{$\cC_{\ep;-}^{\h}$}}\rput(29.5,6.7){\sm{$\cC_{\ep;+}^{\h}$}}
\rput(22.2,3.7){\sm{$\cC_{\ep;-}^{\v}$}}\rput(33.8,3.7){\sm{$\cC_+^{\v}$}}
\rput(26.5,2.3){\sm{$\cC_{\ep;-}^{\circ}$}}\rput(26.5,5.7){\sm{$\cC_{\ep;+}^{\circ}$}}
\rput(34.5,.6){\sm{$x$}}\rput(20.6,9.4){\sm{$y$}}
\end{pspicture}
\caption{The curves in the proof of Lemma~\ref{TW_lmm}.}
\label{TW_fig}
\end{figure} 

\noindent
For each $d\!\in\!\Z^+$,
\BE{TWlmm_e3}\begin{split}
2\pi\fI\, n_d= \int_{\cC_{\ep;-}^{\v}}\!\!\!\!\!\!\!\!F(z)e^{-dz}\d z
= \int_{\cC_+^{\v}}\!\!\!\!\!\!F(z)e^{-dz}\d z&-
\bigg(\int_{\cC_{\ep;-}^{\circ}}\!\!\!\!\!\!\!\!F(z)e^{-dz}\d z 
+\int_{\cC_{\ep;+}^{\circ}}\!\!\!\!\!\!\!F(z)e^{-dz}\d z\!\!\bigg)\\
&+\bigg(\int_{\cC_{\ep;-}^h}\!\!\!\!\!\!\!\!F(z)e^{-dz}\d z 
-\int_{\cC_{\ep;+}^h}\!\!\!\!\!\!\!F(z)e^{-dz}\d z\!\!\bigg).
\end{split}\EE
By the compactness of $\cC_+^{\v}$, there exists $C_{\de}^{\v}\!\in\!\R$ such that 
\BE{TWlmm_e5}
\bigg|\int_{\cC_+^{\v}}\!\!\!\!\!F(z)e^{-dz}\d z\bigg|
\le C_{\de}^{\v}\,e^{-d(x_0+\de)} \qquad\forall\,d\!\in\!\Z^+\,.\EE
By~\eref{TWlmm_e2b}, 
\BE{TWlmm_e7}
\bigg|\bigg(\int_{\cC_{\ep;-}^{\circ}}\!\!\!\!\!\!\!\!F(z)e^{-dz}\d z+
\int_{\cC_{\ep;+}^{\circ}}\!\!\!\!\!\!\!\!F(z)e^{-dz}\d z\bigg)
-2\pi\fI\, a_{-1}e^{-dx_0}\bigg| \le 2\pi C_{\de}^{\circ}e^{-dx_0}\ep^{1/2}
~~\forall\,\ep\!\in\!(0,\de),\,d\!\in\!\Z^+\,.\EE
By~\eref{TWlmm_e1}, 
\begin{equation*}\begin{split}
\int_{\cC_{\ep;-}^h}\!\!\!\!\!\!\!\!F(z)e^{-dz}\d z 
-\int_{\cC_{\ep;+}^h}\!\!\!\!\!\!\!F(z)e^{-dz}\d z
&=2\!\!\!\!\!\!\sum_{\begin{subarray}{c}2k\in \Z-2\Z\\ -1<k \end{subarray}}\!\!\!\!
a_k 
\!\!\int_{\ep}^{\de}\!\!t^ke^{-d(x_0+t)}\d t\\
&=2e^{-dx_0}\!\!\!\!\!\!\sum_{\begin{subarray}{c}2k\in \Z-2\Z\\ -1\le k \end{subarray}}
\!\!\!\!\!\!a_k
\Ga_{d;\de}(k\!+\!1) 
-2e^{-dx_0}\!\!\!\!\!\!\sum_{\begin{subarray}{c}2k\in \Z-2\Z\\ -1\le k \end{subarray}}\!\!\!\!a_k 
\!\!\int_0^{\ep}\!\!t^ke^{-dt}\d t\,.
\end{split}\end{equation*}
Along with~\eref{TWlmm_e2a}, this gives
\BE{TWlmm_e9}\begin{split}
\bigg|\bigg(\int_{\cC_{\ep;-}^h}\!\!\!\!\!\!\!\!F(z)e^{-dz}\d z 
-\int_{\cC_{\ep;+}^h}\!\!\!\!\!\!\!F(z)e^{-dz}\d z\!\!\bigg)
-2e^{-dx_0}\!\!\!\!\!\!\sum_{\begin{subarray}{c}2k\in \Z-2\Z\\ -1\le k \end{subarray}}
\!\!\!\!\!\!\!a_k\Ga_{d;\de}(k\!+\!1)\bigg|\le 4C_{\de}^he^{-dx_0}\ep^{1/2}.
\end{split}\EE
By~ \eref{TWlmm_e1d}, \eref{TWlmm_e1c}, and~\eref{TWlmm_e2a}, 
\BE{TWlmm_e11}\begin{split}
&\bigg|\sum_{\begin{subarray}{c}2k\in \Z-2\Z\\ -1\le k \end{subarray}}
\!\!\!\!\!\!\!a_k\Ga_{d;\de}(k\!+\!1) -
\sum_{\begin{subarray}{c}2k\in \Z-2\Z\\ -1< k<N-1\end{subarray}}
\!\!\!\!\!\!\!\!\!a_k \Ga\big(k\!+\!1\big)d^{-k-1}  \bigg|\\
&\qquad \le 
\sum_{\begin{subarray}{c}2k\in \Z-2\Z\\ -1< k<N-1\end{subarray}}
\!\!\!\!\!\!\!\!\!|a_k|\big|\Ga_{d;\de}\big(k\!+\!1\big)-\Ga\big(k\!+\!1\big)d^{-k-1}\big|
+\sum_{\begin{subarray}{c}2k\in \Z-2\Z\\ N-1< k\end{subarray}}
\!\!\!\!\!\!|a_k|\Ga_{d;\de}\big(k\!+\!1\big)\\
&\qquad \le 
\sum_{\begin{subarray}{c}2k\in \Z-2\Z\\ -1< k<N-1\end{subarray}}
\!\!\!\!\!\!\!\!\!|a_k|(d\de)^{k-N+\frac12}\Ga\big(N\!+\!1/2\big)d^{-k-1}
+\sum_{\begin{subarray}{c}2k\in \Z-2\Z\\ N-1< k\end{subarray}}
\!\!\!\!\!\!|a_k|\de^{k-N+\frac12}\Ga\big(N\!+\!1/2\big)d^{-N-\frac12}\\
&\qquad =\Ga\big(N\!+\!1/2\big)(d\de)^{-N-\frac12} \!\!\!
\sum_{\begin{subarray}{c}2k\in \Z-2\Z\\ -1< k\end{subarray}}
\!\!\!\!\!\!|a_k|\de^{k+1} \le C_{\de}^h\de^{1/2}\Ga\big(N\!+\!1/2\big)(d\de)^{-N-\frac12}.
\end{split}\EE
By~\eref{TWlmm_e3}-\eref{TWlmm_e11}, there exists $C_N\!\in\!\R$ such that 
$$\bigg|n_d+e^{-dx_0}\bigg(
a_{-1} -\frac{1}{\pi\fI} \!\!\!\!\!
\sum_{\begin{subarray}{c}2k\in \Z-2\Z\\ -1< k<N-1\end{subarray}}
\!\!\!\!\!\!\!\!\!a_k \Ga\big(k\!+\!1\big)d^{-k-1}\bigg)\bigg|
\le C_Ne^{-dx_0}\big(e^{-d\de}\!+\!\ep^{1/2}\!+\!d^{-N-\frac12}\big)$$
for all $\ep\!\in\!(0,\de)$ and $d\!\in\!\Z^+$.
Sending $\ep$ to~0, we obtain the claim.
\end{proof}

\begin{rmk}\label{TW_rmk}
Lemma~\ref{TW_lmm} can be used to obtain asymptotics similar to~\eref{TWlmm_e} 
for power series~$F(z)$ as in its statement satisfying 
\BE{TWrmk_e}F(x_0\!+\!z)=\sum_{\begin{subarray}{c}2k\in \Z\\ k_0\le k \end{subarray}}
\!\!a_kz^k\EE
for some $k_0\!\in\!\Z^-$.
The  coefficients~$a_k$ with $k\!\in\!\Z^-$ can be eliminated by 
adding appropriate multiples of the power series
$$F_k(z)=\sum_{d=1}^{\i}d^{1-k}e^{-dx_0}e^{dz}
=\bigg\{\frac{\d}{\d z}\bigg\}^{1-k}\bigg(\frac{e^{z-x_0}}{1-e^{z-x_0}}\bigg)\,.$$
The  coefficients~$a_k$ with $k\!\in\!\Z^-\!-\!2\Z$ can then be eliminated by 
integrating~$F(z)$ enough times.
These two modifications do not break the remaining requirements imposed on~$F$ by Lemma~\ref{TW_lmm} 
and reduce any expansion as in~\eref{TWrmk_e} to one with $k_0\!=\!0$.
In such a case, the $\ep\!=\!0$ contour of \cite[pp8,9]{TW} suffices.
\end{rmk}

\subsection{Proof of Proposition~\ref{FIexp_prp}}
\label{expand_subs}

\noindent
It remains to establish the last statement of Proposition~\ref{FIexp_prp}.

\begin{lmm}\label{F0uniq_lmm}
Let $x_0\!\in\!\R$, $\de\!\in\!\R^+$, $x^*\!\in\!(x_0\!-\!\de,x_0)$, and 
\BE{F0uniqlmm_e0}F,G\!:(x_0\!-\!\de,x_0)\lra \R\EE
be solutions of~\eref{F0cond_e} satisfying~\eref{F0cond_e2} and 
\BE{F0uniqlmm_e}  F(x^*)=G(x^*), \qquad F'(x^*)=G'(x^*), \qquad F''(x^*)<G''(x^*)\,.\EE
Then $F''(x)\!<\!G''(x)$ for all $x\!\in\![x^*,x_0]$.
\end{lmm}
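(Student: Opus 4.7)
My plan is to set $h=G-F$, so that $h(x^*)=h'(x^*)=0$ and $h''(x^*)>0$, and to derive a linear third-order equation for $h$ by subtracting \eref{F0cond_e} applied to $G$ from \eref{F0cond_e} applied to $F$. Writing $D(F)=9+2F'-3F''$ and $D(G)$ analogously (each positive on $(x_0-\de,x_0)$ by the first inequality in \eref{F0bound_e}, since both $F$ and $G$ satisfy \eref{F0cond_e2}), the subtraction yields
$$D(G)\,h''' \;=\; 2h \;-\; (11+2F''')\,h' \;+\; (18+G''+F''+3F''')\,h''.$$

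I would then argue by contradiction. Suppose $h''$ fails to be strictly positive on $(x^*,x_0)$ and let $\bar x\in(x^*,x_0)$ be its first zero. On $[x^*,\bar x)$ we have $h''>0$, so integration gives $h'(\bar x)>0$ and $h(\bar x)>0$; at $\bar x$ itself $h''(\bar x)=0$ and $h'''(\bar x)\le 0$, since $h''$ reaches zero from positive values. Evaluating the linearized ODE at $\bar x$ kills the $h''$ term and leaves
$$D(G)(\bar x)\,h'''(\bar x) \;=\; 2h(\bar x) \;-\; (11+2F'''(\bar x))\,h'(\bar x);$$
the aim is to show this right-hand side is strictly positive, forcing $h'''(\bar x)>0$ and a contradiction.

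The cleanest vehicle I see for producing this positivity is the auxiliary function $\Psi = D(F)-D(G) = 3h''-2h'$, which satisfies $\Psi(x^*)=3h''(x^*)>0$; once $\Psi>0$ is known on $[x^*,x_0)$, a short bootstrap using $h'(x^*)=0$ propagates $h''>0$ and $h'>0$ to $(x^*,x_0)$. Applying the same contradiction strategy to $\Psi$, at a putative first zero $\bar x$ one has $h'(\bar x)=\tfrac{3}{2}h''(\bar x)$, and substitution into the linearized ODE gives $D(G)h''' = 2h + h''(\tfrac{3}{2}+G''+F'')$ at $\bar x$; the condition $\Psi'(\bar x)>0$ then reduces to
$$6\,h(\bar x) \;>\; h''(\bar x)\,\bigl[\,9G''(\bar x)-4G'(\bar x)+3F''(\bar x)-\tfrac{27}{2}\,\bigr].$$
Using $G'<G''$ from \eref{F0cond_e2} bounds the bracket above by $5G''+3F''-\tfrac{27}{2}$, which yields the contradiction immediately whenever $G''$ and $F''$ at $\bar x$ are of moderate size.

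The main obstacle I foresee is the complementary regime in which $G''(\bar x)$ and $F''(\bar x)$ are both small, where the bracket can be positive. The elementary estimate $h(\bar x)\le(\bar x-x^*)\,h'(\bar x)$ only bounds $h$ from above, whereas what is needed here is a lower bound on $h(\bar x)$. I expect this gap to be closed either by replacing $\Psi$ with a corrected quantity $\Psi+\alpha h$ for a well-chosen $\alpha>0$ (shifting the algebra so that the $h$-contribution carries more weight), or by integrating the linearized ODE against a suitable positive weight on $[x^*,\bar x]$ to extract a lower bound on $h(\bar x)$ in terms of $h''(x^*)$ independent of $\bar x-x^*$. Either route will require a more careful exploitation of the full chain $0<F<F'<F''<F'''$ from \eref{F0cond_e2}, not merely the inequality $F'<F''$.
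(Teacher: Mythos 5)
Your proposal is a plan with an acknowledged hole at its decisive step, and the hole is real: the first-zero contradiction does not close, either for $h''$ or for $\Psi=3h''-2h'$. For $h''$, at a first zero $\bar x$ the identity $D(G)h'''(\bar x)=2h(\bar x)-(11+2F'''(\bar x))h'(\bar x)$ has a right-hand side that is in fact \emph{negative} (after shrinking $\de$, $h(\bar x)\le(\bar x-x^*)h'(\bar x)\le h'(\bar x)$ since $h'$ is increasing on $[x^*,\bar x]$, while $h'(\bar x)>0$ and $F'''>0$), so one only recovers $h'''(\bar x)\le 0$ and no contradiction. For $\Psi$, your displayed inequality has the bracket with the wrong sign: substituting $h'=\tfrac32h''$ into the linearized equation gives $D(G)\Psi'(\bar x)=6h+h''\bigl(9G''+3F''-4G'-\tfrac{27}{2}\bigr)$, so $\Psi'(\bar x)>0$ requires $6h(\bar x)>h''(\bar x)\bigl(\tfrac{27}{2}+4G'-9G''-3F''\bigr)$. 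The problematic regime is thus when $G''(\bar x)$ and $F''(\bar x)$ are small, where the right-hand bracket approaches $\tfrac{27}{2}$; since $h(x^*)=h'(x^*)=0$ while $h''(x^*)>0$, there is no a priori lower bound on $h/h''$ near $x^*$, and neither of your two proposed repairs (adding $\alpha h$ to $\Psi$, or a weighted integration) is carried out. As written, the argument is incomplete exactly where the work is.

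The paper's proof avoids pointwise evaluation at a first zero altogether and instead runs a continuous induction on the \emph{third}-order inequality $F'''<G'''$. Assuming $\de<1$ and $h'''>0$ on $(x^*,x')$, one gets $h'(x')=\int_{x^*}^{x'}h''\le h''(x')(x'-x^*)\le h''(x')$, hence $F\le G$, $F'\le G'$, $F''<G''$, and $F''-F'\le G''-G'$ at $x'$. Writing \eref{F0cond_e} as $F'''=\bigl(2F-11F'+18F''+(F'')^2\bigr)/(9+2F'-3F'')$, these four inequalities make the numerator strictly larger and the (positive) denominator strictly smaller for $G$ than for $F$, so $F'''(x')<G'''(x')$; this strict conclusion makes the set of admissible $x'$ open and closed, and $h''$ then stays above $h''(x^*)>0$ on all of $[x^*,x_0]$. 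Note that the inequality $h''\ge h'$ (equivalently $\Psi\ge h''>0$), which you try to produce by evaluating the ODE at a single point, is exactly what the bootstrap delivers for free from the normalization $\de<1$; your computation in effect shows that it cannot be extracted from the equation at one point alone.
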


\begin{proof} We can assume that $\de\!<\!1$.
Suppose $x'\!\in\!(x^*,x_0)$ and $F'''(x)\!<\!G'''(x)$ for all $x\!\in\!(x^*,x')$.
Then,
$$0\le G'(x')-F'(x')=\int_{x^*}^{x'}\!\!\big(G''(x)\!-\!F''(x)\big)\d x
\le \big(G''(x')\!-\!F''(x')\big)(x'\!-\!x^*)\le G''(x')\!-\!F''(x').$$
Thus, 
$$F(x')\le G(x'),~F'(x')\le G'(x'),~F''(x')<G''(x'),~F''(x')\!-\!F'(x')\le G''(x')\!-\!G'(x').$$
Along with~\eref{F0cond_e}, this implies that $F'''(x')\!<\!G'''(x')$.
The claim now follows.
\end{proof}

\begin{crl}\label{F0uniq_crl}
Let $F,G$ be solutions of~\eref{F0cond_e} as in~\eref{F0uniqlmm_e0} satisfying~\eref{F0cond_e2}
and $x'\!\in\!(x^*,x_0)$ be such~that 
\BE{F0uniqcrl_e} F(x')=G(x^*), \quad F'(x')=G'(x^*), \quad
\lim_{x\lra^-x_0}\!\!\!\!\!F'''(x),\lim_{x\lra^-x_0}\!\!\!\!\!G'''(x)=\i.\EE
Then $F''(x')\!\ge\!G''(x^*)$.
\end{crl}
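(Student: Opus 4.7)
The plan is to argue by contradiction, exploiting the autonomous nature of~\eref{F0cond_e} to translate $G$ so that the two-point comparison in the corollary collapses to a single-point comparison accessible to Lemma~\ref{F0uniq_lmm}. Suppose $F''(x')<G''(x^*)$, set $h:=x'-x^*>0$, and define $\tilde G(x):=G(x-h)$ on $(x_0-\delta+h,x_0+h)$. Because \eref{F0cond_e} has no explicit dependence on the independent variable, $\tilde G$ solves \eref{F0cond_e} and inherits \eref{F0cond_e2}, and by construction $\tilde G(x')=G(x^*)$, $\tilde G'(x')=G'(x^*)$, and $\tilde G''(x')=G''(x^*)$. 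Together with $F(x')=G(x^*)$ and $F'(x')=G'(x^*)$, the contradiction assumption places the pair $(F,\tilde G)$ in the exact setup of Lemma~\ref{F0uniq_lmm} with base point~$x'$ on $(x_0-\delta+h,x_0)$. After further shrinking $\delta$ so that $\delta<1$ as in that lemma's proof, Lemma~\ref{F0uniq_lmm} yields $F''(x)<\tilde G''(x)$ on $[x',x_0)$, and its proof in fact delivers the stronger pointwise bound $F'''(x)<\tilde G'''(x)$ there.

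Next I would use this to obtain a strict positive lower bound on the coefficient $D[\cdot]:=9+2(\cdot)'-3(\cdot)''$ of $F_0'''$ in~\eref{F0cond_e}. The inequality $F'''<\tilde G'''$ shows that $\tilde G''-F''$ is increasing; integrating from~$x'$, where $\tilde G'-F'$ vanishes, yields $\tilde G'(x)-F'(x)\le(x-x')(\tilde G''(x)-F''(x))$. Since $x-x'<\delta<1$, this gives
\[D[F](x)-D[\tilde G](x)=3\big(\tilde G''\!-\!F''\big)(x)-2\big(\tilde G'\!-\!F'\big)(x)\ge(3-2\delta)\big(\tilde G''\!-\!F''\big)(x)>0\]
on $[x',x_0)$. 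Because $x_0-h\in(x^*,x_0)$ lies strictly inside the domain of $G$, $\tilde G$ is smooth at~$x_0$; applying the ODE to $\tilde G$ at~$x_0$ then gives $D[\tilde G](x_0)=N[\tilde G](x_0)/\tilde G'''(x_0)>0$, where $N[\cdot]:=2(\cdot)-11(\cdot)'+18(\cdot)''+((\cdot)'')^2$ is positive on any solution satisfying~\eref{F0cond_e2} (as used around~\eref{F0bound_e}). Passing to the limit, $\liminf_{x\to x_0^-}D[F](x)\ge D[\tilde G](x_0)>0$.

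The contradiction is now immediate. The inequalities $F<\tilde G$, $F'<\tilde G'$, $F''<\tilde G''$ on $(x',x_0)$ force $F,F',F''$ to have finite limits at~$x_0$, so $N[F](x_0^-)$ is finite; combined with the positive lower bound on $D[F]$, the ODE~\eref{F0cond_e} makes $F'''(x_0^-)=N[F](x_0^-)/D[F](x_0^-)$ finite, contradicting the hypothesis $F'''(x_0^-)=\infty$. The main obstacle, and essentially the entire content of the argument, is devising the translation that converts data prescribed at the two distinct points $x^*$ and $x'$ into the same-point data required by Lemma~\ref{F0uniq_lmm}; once this is in place, the rest is routine manipulation of the autonomous ODE.
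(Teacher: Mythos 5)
Your argument is correct, and it rests on the same key idea as the paper's proof: use the translation invariance of the autonomous equation \eref{F0cond_e} to collapse the two-point data of \eref{F0uniqcrl_e} to one-point data, apply Lemma~\ref{F0uniq_lmm} (more precisely the pointwise inequality $F'''\!<\!G'''$ established in its proof), and contradict the divergence of the third derivative. The only real difference is the direction of the shift. You translate $G$ forward by $h\!=\!x'\!-\!x^*$ so that $\ti{G}$ is smooth across $x_0$, and you then need an extra endgame: bounding $9\!+\!2F'\!-\!3F''$ away from zero near $x_0$ via the integrated inequality and the value of this quantity for $\ti{G}$ at $x_0$, and showing that $F,F',F''$ have finite limits at $x_0$, so that the equation forces $F'''$ to stay bounded -- all of which checks out but costs an additional paragraph. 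The paper instead translates $F$ backward, setting $\wt{F}(x)\!=\!F(x\!+\!y)$ with $y\!=\!x'\!-\!x^*$, so that the blow-up point of $\wt{F}'''$ lands at $x_0\!-\!y$, an interior point of the domain of $G$ where $G'''$ is finite; the comparison $\wt{F}'''\!<\!G'''$ from Lemma~\ref{F0uniq_lmm} then gives the contradiction in one line. Both are valid; the paper's choice of which function to shift simply places the singularity where the comparison function is manifestly regular, making your lower bound on $9\!+\!2F'\!-\!3F''$ unnecessary.
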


\begin{proof} Suppose $F''(x')\!<\!G''(x^*)$.
Let $y\!=\!x'\!-\!x^*\!>\!0$.
Since \eref{F0cond_e} is a homogeneous differential equation,
the~function 
$$\wt{F}\!:\big(x_0\!-\!y\!-\!\de,x_0\!-\!y\big)\lra\R, \qquad
\wt{F}(x)=F(x\!+\!y),  $$
is a solution of~\eref{F0cond_e} satisfying~\eref{F0cond_e2} and 
$$\wt{F}(x^*)=G(x^*), \quad \wt{F}'(x^*)=G'(x^*)\,,\quad
\wt{F}''(x^*)<G''(x^*), \quad
\lim_{x\lra^-x_0-y}\!\!\!\!\!\wt{F}'''(x)=\i\,.$$
This contradicts the conclusion of Lemma~\ref{F0uniq_lmm}, since $G'''(x_0\!-\!y)$ is finite.
\end{proof}

\begin{lmm}\label{Frobexp_lmm}
Let $a_0,a_2\in\!\R$ be such~that 
\BE{a5cond_e0} 4a_2^2\!+\!45a_2\!+\!18a_0\!+\!567> 0.\EE
Then there exist $\de\!\in\!\R^+$ and unique 
\BE{a5cond_e} a_4\!\in\!\R,\quad a_5\!\in\!\fI\R^-,
\quad a_{2d}\!\in\!\R,~a_{2d+1}\!\in\!\fI\R~~\forall~d\!\in\!\Z,\,d\!\ge\!3,\EE
such~that the power series in \eref{FIexp_e} converges uniformly 
for $z\!\in\!B_{\de}(0)$ with $\Re(z)\!\le\!0$
to a solution of~\eref{F0cond_e}.
The number $\de\!=\!\de(a_0,a_2)$ can be chosen to depend continuously on $(a_0,a_2)\!\in\!\R^2$
satisfying~\eref{a5cond_e0}.
\end{lmm}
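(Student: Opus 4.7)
The plan is to substitute the ansatz \eref{FIexp_e} directly into \eref{F0cond_e} and solve for the coefficients recursively. Writing $F_0(x_0\!+\!z)=\sum_{d\ge 0}a_dz^{d/2}$ with $a_1\!=\!a_3\!=\!0$, term-by-term differentiation yields
\begin{equation*}
F_0'(x_0\!+\!z)=\sum_{d\ge 2}\tfrac{d}{2}a_dz^{(d-2)/2},\quad F_0''(x_0\!+\!z)=\sum_{d\ge 4}\tfrac{d(d-2)}{4}a_dz^{(d-4)/2},
\end{equation*}
and analogously for $F_0'''$. Matching the coefficient of $z^{-1/2}$ in \eref{F0cond_e} forces $a_4\!=\!(9\!+\!2a_2)/6$, and matching the coefficient of $z^0$ then produces
\begin{equation*}
-\tfrac{675}{32}a_5^2=2a_0\!-\!11a_2\!+\!36a_4\!+\!4a_4^2=\tfrac{1}{9}\bigl(4a_2^2\!+\!45a_2\!+\!18a_0\!+\!567\bigr).
\end{equation*}
Under the hypothesis \eref{a5cond_e0} the right-hand side is positive, so $a_5^2$ is negative and its two conjugate imaginary square roots are distinguished uniquely by the convention $a_5\!\in\!\fI\R^-$.

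For $d\!\ge\!6$, matching the coefficient of $z^{(d-5)/2}$ yields a linear equation of the form
\begin{equation*}
-\tfrac{45}{32}d(d\!-\!2)(d\!-\!3)\,a_5\,a_d=R_d(a_0,a_2,a_4,a_5,\ldots,a_{d-1}),
\end{equation*}
where $R_d$ is a polynomial of total degree at most~$2$ in its arguments with polynomial-in-$d$ coefficients. Since $a_5\!\neq\!0$, each $a_d$ is uniquely determined. The parity assertion in \eref{a5cond_e} is then proved by induction on~$d$: because \eref{F0cond_e} has real coefficients, only terms $a_j$ with $j\!\in\!\{d\!-\!5,d\!-\!3,d\!-\!1\}$ and products $a_ja_l$ with $j\!+\!l\!\in\!\{d\!+\!3,d\!+\!5\}$ enter $R_d$; a direct parity count on the indices shows $R_d\!\in\!\fI\R$ when $d$ is even and $R_d\!\in\!\R$ when $d$ is odd, so dividing by $-\frac{45}{32}d(d\!-\!2)(d\!-\!3)a_5\!\in\!\fI\R$ reproduces the required parity of $a_d$.

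The main obstacle is the convergence of the formal expansion, which I would establish by a majorant induction with the bound $|a_d|\!\le\!M\rho^d/d^2$ for $d\!\ge\!4$. Since $|B_d|\!=\!\tfrac{d(d-2)}{4}|a_d|$ is then bounded by $CM\rho^d$, the quadratic term $(F_0'')^2$ contributes at most $d$ summands $B_jB_l$ with $j\!+\!l\!=\!d\!+\!3$ to $R_d$, each bounded by $C^2M^2\rho^{d+3}$; the left-hand-side contributions $p_kq_m$ and the linear contributions from $2F_0\!-\!11F_0'\!+\!18F_0''$ are of the same or smaller order of magnitude. This gives $|R_d|\!\le\!C'dM^2\rho^{d+3}$, and dividing by $|\tfrac{45}{32}d(d\!-\!2)(d\!-\!3)a_5|$ yields $|a_d|\!\le\!C''M^2\rho^{d+3}/(d^2|a_5|)$. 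The induction closes as long as $\rho^3\!\le\!|a_5|/(C''M)$; choosing $M$ large enough to absorb the base cases $d\!=\!4,5$ and $\rho$ small enough then yields uniform convergence of \eref{FIexp_e} on $\{z\!\in\!B_\de(0):\Re z\!\le\!0\}$ with $\de\!=\!\rho^{-2}$, where the chosen branch of $z^{1/2}$ is single-valued. Because $|a_5|$ and the constants governing the estimate for $R_d$ depend continuously on $(a_0,a_2)$ throughout the open region \eref{a5cond_e0}, the parameters $\rho$ and $M$, and hence $\de=\de(a_0,a_2)$, can be chosen continuously in $(a_0,a_2)$.
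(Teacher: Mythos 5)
Your algebraic setup coincides with the paper's: the same substitution gives $a_4=\tfrac32+\tfrac13a_2$, the same relation $-\tfrac{675}{32}a_5^2=\tfrac19\bigl(4a_2^2+45a_2+18a_0+567\bigr)$ (recorded in the paper as \eref{arec_e1}), the unique choice of $a_5\in\fI\R^-$, the linear recursion with leading coefficient $\tfrac{45}{32}d(d-2)(d-3)a_5$ (the paper's \eref{arec_e2} after an index shift), and the parity of the $a_d$ by induction. All of that is correct. The gap is in the convergence argument, which is the substantive content of the lemma: the majorant ansatz $|a_d|\le M\rho^d/d^2$ does not close. The terms you dismiss as ``of the same or smaller order of magnitude'' --- the cross terms $p_kq_m$ coming from $\bigl(2F_0'-3F_0''\bigr)F_0'''$ --- are in fact larger than the $(F_0'')^2$ terms. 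Under your ansatz the coefficients of $F_0'''$ satisfy $|q_m|=\tfrac{m(m-2)(m-4)}{8}|a_m|\le\tfrac{m}{8}M\rho^m$, i.e.\ they carry an extra factor of $m$ relative to the coefficients $B_m$ of $F_0''$, so
\begin{equation*}
\sum_{k+m=d+1}|p_k|\,|q_m|\ \lesssim\ M^2\rho^{d+5}\!\!\sum_{k+m=d+1}\!\!m\ =\ O\bigl(d^2M^2\rho^{d+5}\bigr),
\end{equation*}
whereas closing the induction requires $|R_d|\lesssim d(d-2)(d-3)|a_5|\cdot M\rho^d/d^2\approx |a_5|\,dM\rho^d$. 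The estimate misses by a factor of $d$, and no choice of $M$ and $\rho$ repairs this. A weight $d^{-\al}$ with $\al>3$ is needed for the convolution sums $\sum_{k+m=d}k^{2-\al}m^{3-\al}$ to behave (the true decay, visible from the paper's bound and from \eref{TWlmm_e}, is $d^{-9/2}$); and even then one must reconcile the smallness of $M\rho^5/|a_5|$ demanded by the quadratic terms with the base cases $d=4,5$, a tension your sketch does not address.

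The paper sidesteps both issues by a different device: it dominates $|a_d|$ by an auxiliary sequence $\wt{a}_d$ obeying a recursion with convolution coefficients $d_1^3d_2^3/d^3$, compares that with the exactly solvable recursion of Lemma~\ref{rec_lmm}, and reads off the geometric bound $|a_d|\le C^d\bigl(1+|a_5|^{-1}\bigr)^d$ from the closed-form solution together with Stirling's estimate \eref{Stirling_e}. If you prefer a direct induction, replace $d^{-2}$ by $d^{-4}$ (say), verify that every convolution sum then contributes $O\bigl(d^{3-4}M^2\rho^{d+5}\bigr)$, and fix the order in which $\rho$ and $M$ are chosen so that both the quadratic estimate and the finitely many initial inequalities hold; the continuity of $\de(a_0,a_2)$ then follows as you indicate, since $|a_5|$ depends continuously on $(a_0,a_2)$ in the region \eref{a5cond_e0}.
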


\begin{proof}
For arbitrary $a_d\!\in\!\C$, let
\BE{F0expass_e} F_0(x_0\!+\!z)=\sum_{d=0}^{\i}a_dz^{d/2}.\EE
The differential equation \eref{F0cond_e} is then equivalent~to 
\begin{gather*}
a_1,a_3=0\,, \qquad (9\!+\!2a_2\!-\!6a_4)a_5=0\,, \\
\begin{split}
&(9\!+\!2a_2\!-\!6a_4)\frac{(d\!+\!2)(d\!+\!4)(d\!+\!6)}{8}a_{d+6}
=2a_d-\frac{11(d\!+\!2)}{2}a_{d+2}
+\frac{9(d\!+\!2)(d\!+\!4)}{2}a_{d+4}\\
&\hspace{1.5in}-\sum_{\begin{subarray}{c}d_1+d_2=d\\ d_1,d_2\ge0\end{subarray}}\!\!\!
\frac{(d_1\!+\!1)(d_1\!+\!3)(d_1\!+\!5)(d_2\!+\!3)}{32}a_{d_1+5}
\big(4a_{d_2+3}\!-\!3(d_2\!+\!5)a_{d_2+5}\big)\\
&\hspace{1.5in}+\sum_{\begin{subarray}{c}d_1+d_2=d\\ d_1,d_2\ge0\end{subarray}}\!\!\!
\frac{(d_1\!+\!2)(d_1\!+\!4)(d_2\!+\!2)(d_2\!+\!4)}{16}a_{d_1+4}a_{d_2+4}\,.
\end{split}\end{gather*}
The last equation holds for all $d\!\ge\!0$.\\

\noindent 
If $a_5\!\neq\!0$, the last two conditions above are equivalent~to
\begin{gather}\label{arec_e1}
9\!+\!2a_2\!-\!6a_4=0\,,\qquad
-\frac{675}{32}a_5^2=2a_0-11a_2+36a_4+4a_4^2\,,\\
\label{arec_e2}
\begin{split}
&\frac{45(d\!+\!2)(d\!+\!3)(d\!+\!5)}{32}a_5a_{d+5}
=-2a_d+\frac{11(d\!+\!2)}{2}a_{d+2}
+\frac{(d\!+\!2)(d\!+\!4)}{2}\big((d\!-\!2)a_4\!-\!9\big)a_{d+4}\\
&\hspace{1.5in}-\sum_{\begin{subarray}{c}d_1+d_2=d\\ d_1,d_2\ge1\end{subarray}}\!\!\!
\frac{3(d\!+\!2)(d_1\!+\!3)(d_1\!+\!5)(d_2\!+\!3)(d_2\!+\!5)}{64}a_{d_1+5}a_{d_2+5}\\
&\hspace{1.5in}+\sum_{\begin{subarray}{c}d_1+d_2=d\\ d_1,d_2\ge1\end{subarray}}\!\!\!
\frac{(d_1^2\!+\!d_2^2\!-\!d_1d_2\!-\!4)(d_1\!+\!4)(d_2\!+\!4)}{16}a_{d_1+4}a_{d_2+4}\,;
\end{split}\end{gather}
the last equation is valid for $d\!\ge\!1$.
By~\eref{arec_e1}, 
$$a_4=\frac32+\frac13a_2, \qquad -\frac{6075}{32}a_5^2=4a_2^2\!+\!45a_2\!+\!18a_0\!+\!567\,.$$
By~\eref{a5cond_e0}, these equations determine $a_d\!\in\!\C$ with $d\!\ge\!4$.
By induction, the coefficients~$a_d$ satisfy~\eref{a5cond_e}.\\

\noindent
It remains to show that the power series~\eref{F0expass_e} with $a_d$ given by~\eref{arec_e2} 
converges uniformly on a small disk.
Let $\wt{a}_d$ be the sequence recursively defined~by
\begin{gather*}
\wt{a}_d=1+\sum_{k=1}^6|a_k| \qquad\forall~d\le6,\\
\begin{split}
d^3\,\wt{a}_5\wt{a}_{d+5}&= 2\wt{a}_d+20d\,\wt{a}_{d+2}+
\big(64\!+\!16d a_4\big)d^2\,\wt{a}_{d+4}\\
&\quad +36\!\!\!\!\!\sum_{\begin{subarray}{c}d_1+d_2=d\\ d_1,d_2\ge1\end{subarray}}\!\!\!\!\!
d_1^3d_2^3\,\wt{a}_{d_1+5}\wt{a}_{d_2+5}
+5\!\!\!\!\!\sum_{\begin{subarray}{c}d_1+d_2=d\\ d_1,d_2\ge1\end{subarray}}\!\!\!
\!\!d_1^3d_2^3\,\wt{a}_{d_1+4}\wt{a}_{d_2+4} \quad\forall~d\ge2.
\end{split}\end{gather*}
Let $n_d$ be the sequence recursively defined~by 
$$n_1=\wt{a}_6, \qquad 
n_d=150\big(1\!+\!|a_5|^{-1}\big)\!\!\!\!\!\!
\sum_{\begin{subarray}{c}d_1+d_2=d\\ d_1,d_2\ge1\end{subarray}}\!\!\!
\frac{d_1^3d_2^3}{d^3}n_{d_1}n_{d_2} \quad\forall~ d\ge2.$$
By~\eref{arec_e2} and the sequence $\wt{a}_d$ being positive and non-decreasing,
$$|a_d|\le \wt{a}_d \le n_{d-5} \qquad\forall~d\!\ge\!6\,.$$
By Lemma~\ref{rec_lmm} and~\eref{Stirling_e}, there thus exists $C\!\in\!\R^+$ such that   
$$|a_d|\le C^d\big(1+|a_5|^{-1}\big)^d \qquad\forall\,d\!\in\!\Z^+\,.$$
It follows that \eref{F0expass_e} defines a  solution of~\eref{F0cond_e} 
around $z\!=\!x_0$ for any choice of $a_0$ and $a_2$ such 
that \eref{a5cond_e0} is satisfied.
\end{proof}

\noindent
We define
\BE{Wdfn_e}W=\big\{(a_0,a_2)\!\in\!\R^2\!: 0\!<\!a_0\!<\!a_2\!<\!9\big\}\,.\EE
For each $(a_0,a_2)\!\in\!W$, let 
$$F_{a_0,a_2}\!: \big\{z\!\in\!\C^{\le}_{x_0}\!:|x_0\!-\!z|\!<\!\de(a_0,a_2)\big\}\lra\C$$ 
be the solution~\eref{F0expass_e} of~\eref{F0cond_e} provided by Lemma~\ref{Frobexp_lmm}.
In particular, 
\BE{Flim_e}
F_{a_0,a_2}(x_0)\!=\!a_0 < F_{a_0,a_2}'(x_0)\!=\!a_2 <
F_{a_0,a_2}''(x_0)\!=\!3\!+\!\frac23a_2\,.\EE
Reducing the continuous function $\de\!=\!\de(a_0,a_2)$ if necessary, we can 
assume~that 
$$0<F_{a_0,a_2}(z)<F_{a_0,a_2}'(z)<F_{a_0,a_2}''(z)<F_{a_0,a_2}'''(z) 
\qquad \forall~z\!\in\!\big(x_0\!-\!\de(a_0,a_2),x_0\big).$$
We define
\begin{gather*}
\Phi\!:\wt{W}\!\equiv\!\{(z,a_0,a_2)\!\in\!(-\i,x_0]\!\times\!W\!: 
x_0\!-\!z\!<\!\de(a_0,a_2)\big\}\lra (-\i,x_0]\!\times\!\R^2, \\
\Phi(z,a_0,a_2)= \big(z,F_{a_0,a_2}(z),F_{a_0,a_2}'(z)\big)\,.
\end{gather*}

\begin{lmm}\label{homeom_lmm} Let $(a_0,a_2)\!\in\!W$. 
There exist neighborhoods  $W_{(a_0,a_2)}$ and $V_{(a_0,a_2)}$
of $(x_0,a_0,a_2)$ in \hbox{$(-\i,x_0]\!\times\!\R^2$} such~that the~map
$$\Phi\!: W_{(a_0,a_2)}\lra  V_{(a_0,a_2)}$$
is a homeomorphism.
\end{lmm}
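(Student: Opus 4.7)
The plan is to apply the implicit function theorem to the last two components of $\Phi$, treating $z$ as a parameter. Writing the running parameters as $(b_0,b_2)$ to distinguish them from the fixed center $(a_0,a_2)$, set
$$G(z,b_0,b_2)\equiv\big(F_{b_0,b_2}(z),\,F_{b_0,b_2}'(z)\big),$$
so that $\Phi(z,b_0,b_2)=(z,G(z,b_0,b_2))$. By \eref{Flim_e}, $G(x_0,b_0,b_2)=(b_0,b_2)$, and in particular $\Phi(x_0,a_0,a_2)=(x_0,a_0,a_2)$. It therefore suffices to prove that $G$ is continuous on a neighborhood of $(x_0,a_0,a_2)$ in $\wt W$, is $C^1$ in $(b_0,b_2)$ with $(b_0,b_2)$-Jacobian continuous up to the boundary $z=x_0$, and has $(b_0,b_2)$-Jacobian equal to the identity matrix at $z=x_0$; the last claim is immediate from $G(x_0,b_0,b_2)=(b_0,b_2)$.

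The first step is to verify that each coefficient $a_d=a_d(b_0,b_2)$ produced by the recursion \eref{arec_e1}--\eref{arec_e2} is $C^\infty$ on $W$. The relations $a_4=3/2+b_2/3$ and $-\tfrac{6075}{32}a_5^2=4b_2^2+45b_2+18b_0+567$, together with the branch choice $a_5\!\in\!\fI\R^-$ and the strict positivity \eref{a5cond_e0}, make $a_4$ and $a_5$ smooth with $a_5$ nonvanishing throughout $W$; the recursion \eref{arec_e2} then exhibits each $a_d$ with $d\!\ge\!6$ as a polynomial in the preceding $a_k$ divided by $a_5$, so all $a_d$ are $C^\infty$ on $W$ by induction. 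Differentiating \eref{arec_e2} with respect to $b_0$ or $b_2$ gives an essentially linear recursion for $\partial a_d/\partial b_j$ with coefficients depending on the $a_k$; applying the majorant argument from the end of the proof of Lemma~\ref{Frobexp_lmm} to this differentiated recursion yields an exponential bound $|\partial a_d/\partial b_j|\!\le\!\wt C^d$ uniformly on a closed ball around $(a_0,a_2)$ in $W$. Combined with the bound on $|a_d|$ already established there, this forces the series for $F_{b_0,b_2}(z)$, $F_{b_0,b_2}'(z)$, and their first-order $(b_0,b_2)$-partials to converge uniformly on a neighborhood of $(x_0,a_0,a_2)$ in $\wt W$; consequently $G$ is continuous and $C^1$ in $(b_0,b_2)$ with Jacobian continuous up to the boundary $z=x_0$.

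With $\partial G/\partial(b_0,b_2)$ continuous on a neighborhood of $(x_0,a_0,a_2)$ and equal to the identity matrix at $z=x_0$, it is invertible on a possibly smaller neighborhood. The implicit function theorem with parameters, where the parameter is $z\!\in\!(-\i,x_0]$, applied to the equation $G(z,b_0,b_2)\!=\!(u,v)$ in the variables $(b_0,b_2)$ produces a continuous map $H$ on a neighborhood $V_{(a_0,a_2)}$ of $(x_0,a_0,a_2)$ in $(-\i,x_0]\!\times\!\R^2$ with $G(z,H(z,u,v))\!=\!(u,v)$. Setting $W_{(a_0,a_2)}\!\equiv\!\Phi^{-1}(V_{(a_0,a_2)})$ intersected with a small enough product neighborhood of $(x_0,a_0,a_2)$, the map $(z,u,v)\!\mapsto\!(z,H(z,u,v))$ is a continuous two-sided inverse of $\Phi|_{W_{(a_0,a_2)}}$, completing the proof. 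The presence of the boundary $\{z\!=\!x_0\}$ creates no obstacle because the IFT is invoked only in the $(b_0,b_2)$-direction, which varies over the open subset $W$ of $\R^2$.

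The main obstacle is the step asserting uniform convergence up to $z=x_0$ of the $(b_0,b_2)$-differentiated power series \eref{F0expass_e}, which is a priori delicate because $F_{b_0,b_2}$ is not analytic at $z=x_0$ (the fractional-power terms $z^{d/2}$ with $d$ odd are genuine). However, these fractional-power terms begin only at $d=5$, so the $(b_0,b_2)$-partials of $F$ and $F'$ at $z=x_0$ collapse to their constant and linear contributions and reproduce the identity matrix directly, while the singular tail is controlled by the differentiated majorant. No finer analysis of the singular expansion beyond what is already used in Lemma~\ref{Frobexp_lmm} is needed for this lemma.
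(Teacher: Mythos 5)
Your argument is correct in outline but takes a genuinely different route from the paper's. The paper's proof is purely topological: using \eref{Flim_e}, it extends $\Phi$ continuously across the boundary $\{z\!=\!x_0\}$ by the identity map on $[x_0,\i)\!\times\!W$, observes that the resulting map from an open subset of $\R^3$ to $\R^3$ is injective near $(x_0,a_0,a_2)$, and invokes invariance of domain (\cite[Theorem~36.5]{Mu2}); it then cuts the resulting homeomorphism down to the half-space $(-\i,x_0]\!\times\!\R^2$. That route needs only the continuity of the coefficients $a_d(a_0,a_2)$ and of $\de(a_0,a_2)$ already recorded in Lemma~\ref{Frobexp_lmm} --- no differentiability in the parameters at all --- though it does have to justify injectivity separately (via the uniqueness statements of Section~\ref{expand_subs}). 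Your implicit-function-theorem route avoids Brouwer-type input and gets injectivity for free from local uniqueness, and your handling of the boundary $\{z\!=\!x_0\}$ (IFT only in the $(b_0,b_2)$-directions, $z$ as a parameter) is sound. The price is the one step you assert rather than prove: the uniform exponential bound on $\partial a_d/\partial b_j$. That bound is true, but the differentiated recursion is \emph{linear} in the $\partial a_k/\partial b_j$ with an exponentially bounded convolution kernel (and an extra inhomogeneous term from $\partial a_5/\partial b_j$), so the closed-form quadratic majorant of Lemma~\ref{rec_lmm} used at the end of the proof of Lemma~\ref{Frobexp_lmm} does not apply verbatim; you would need a separate (standard) comparison, e.g.\ bounding the generating series of a dominating sequence by solving a linear inequality for it in a sufficiently small disk. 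With that supplied, your proof closes, and it is a legitimate, somewhat more computational alternative to the paper's invariance-of-domain argument.
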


\begin{proof} By~\eref{Flim_e}, 
$$\Phi(x_0,a_0',a_2')=(x_0,a_0',a_2') \qquad\forall~(a_0',a_2')\in W.$$
Thus, $\Phi$ extends to a continuous~map
$$\wt\Phi\!:\wt{W}\!\cup\!\big([x_0,\i)\!\times\!W\big)\lra\R^3, \qquad
\wt\Phi(z,a_0',a_2')=\begin{cases}
\Phi(z,a_0',a_2'),&\hbox{if}~z\!\le\!x_0;\\
(z,a_0',a_2'),&\hbox{if}~z\!\ge\!x_0.
\end{cases}$$
This map is injective on a neighborhood of $(x_0,a_0,a_2)$.
By \cite[Theorem~36.5]{Mu2}, there thus exist neighborhoods $W_{(a_0,a_2)}'$ and $V_{(a_0,a_2)}'$
of $(x_0,a_0,a_2)$ in~$\R^3$ such that~$\wt\Phi$ takes $W_{(a_0,a_2)}'$ 
homeomorphically onto~$V_{(a_0,a_2)}'$.
Taking 
$$W_{(a_0,a_2)}=W_{(a_0,a_2)}' \cap \big((-\i,x_0]\!\times\!\R^2\big)
\quad\hbox{and}\quad
V_{(a_0,a_2)}=V_{(a_0,a_2)}' \cap \big((-\i,x_0]\!\times\!\R^2\big),$$
we complete the proof.
\end{proof}

\begin{crl}\label{homeom_crl} Let $(a_0,a_2)\!\in\!\R^2$ and $G$ be a solution 
of~\eref{F0cond_e} as in~\eref{F0uniqlmm_e0} satisfying~\eref{F0cond_e2} such~that 
\BE{homeomcrl_e} G(x_0)=a_0, \quad 
\lim_{x\lra^-x_0}\!\!\!\!\!G'(x)=a_2, \quad 
\lim_{x\lra^-x_0}\!\!\!\!\!G'''(x)=\i.\EE
Then $(a_0,a_2)\!\in\!W$ and there exists $\de'\!\in\!(0,\de)$ such that
$G\!=\!F_{a_0,a_2}$ on $(x_0\!-\!\de',x_0)$.
\end{crl}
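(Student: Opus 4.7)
The plan is to first deduce $(a_0,a_2)\in W$ from the limit behavior of $G$ and~\eref{F0cond_e2}, then to produce the Frobenius candidate $F_{a_0,a_2}$ via Lemma~\ref{Frobexp_lmm}, and finally to identify $G$ with $F_{a_0,a_2}$ on a left-neighborhood of~$x_0$ by combining Lemma~\ref{homeom_lmm} with the comparison results Lemma~\ref{F0uniq_lmm} and Corollary~\ref{F0uniq_crl}. For the first step: by \eref{F0cond_e2} the functions $G$, $G'$, $G''$ are strictly positive and strictly increasing on $(x_0-\de,x_0)$, hence admit positive limits $a_0,a_2,c_2$ at $x_0^-$. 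Since $\lim G'''=\i$ while the right-hand side of \eref{F0cond_e} has a finite limit, the factor $9+2a_2-3c_2$ must vanish, giving $c_2=(9+2a_2)/3$. The differences $G'-G$ and $G''-G'$ have strictly positive derivatives on $(x_0-\de,x_0)$ by \eref{F0cond_e2}, hence are strictly increasing; passing to limits yields $0<a_0<a_2<c_2$, and $a_2<c_2$ rearranges to $a_2<9$, so $(a_0,a_2)\in W$.

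Since $a_0,a_2>0$, \eref{a5cond_e0} holds automatically and Lemma~\ref{Frobexp_lmm} supplies the Frobenius solution $F:=F_{a_0,a_2}$ on some $(x_0-\de',x_0)$, obeying \eref{F0cond_e2} after shrinking $\de'$ as in the paragraph preceding Lemma~\ref{homeom_lmm}, sharing the limit data $(a_0,a_2,c_2)$ with $G$, and with $F'''\to+\i$ at $x_0^-$ by the leading $a_5z^{-1/2}$ term in~\eref{FIexp_e}. Applying Lemma~\ref{homeom_lmm} at $(a_0,a_2)$: for $x^*$ close enough to $x_0$, continuity of $G,G'$ places $(x^*,G(x^*),G'(x^*))\in V_{(a_0,a_2)}$, and $\Phi^{-1}$ provides a unique continuous pair $(a_0'(x^*),a_2'(x^*))$ near $(a_0,a_2)$, converging to $(a_0,a_2)$ as $x^*\to x_0^-$, satisfying $F_{a_0'(x^*),a_2'(x^*)}(x^*)=G(x^*)$ and $F_{a_0'(x^*),a_2'(x^*)}'(x^*)=G'(x^*)$. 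If at some such $x^*$ one additionally has $F_{a_0'(x^*),a_2'(x^*)}''(x^*)=G''(x^*)$, classical ODE uniqueness at the regular point $x^*$ (where $9+2G'-3G''>0$ by the first line of \eref{F0bound_e}) forces $F_{a_0'(x^*),a_2'(x^*)}\equiv G$ on $(x_0-\de',x_0)$; evaluating at $x_0$ then gives $(a_0'(x^*),a_2'(x^*))=(a_0,a_2)$ and the desired conclusion.

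It remains to rule out the alternative $h(x^*):=F''_{a_0'(x^*),a_2'(x^*)}(x^*)-G''(x^*)\ne 0$ on every left-neighborhood of $x_0$; by continuity $h$ has constant sign, say $h<0$ without loss of generality. Lemma~\ref{F0uniq_lmm}, applied with the value and first derivative match at $x^*$, propagates $F''_{a_0'(x^*),a_2'(x^*)}<G''$ to all of $[x^*,x_0]$, and double integration with the matchings at $x^*$ yields the strict inequalities $a_0'(x^*)<a_0$ and $a_2'(x^*)<a_2$ for each such $x^*$. Monotone dependence of the Frobenius family on $(a_0,a_2)$ at $x_0^-$ then gives $G(x^*)<F(x^*)$ on a left-neighborhood of $x_0$. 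To close, I would apply Corollary~\ref{F0uniq_crl} with $F=G$ and $G=F_{a_0,a_2}$: for each $y$ close to $a_0^-$, the points $x^{**}:=F^{-1}(y)<G^{-1}(y)=:x'$ satisfy $G(x')=F(x^{**})$; an intermediate-value argument on $\psi(y):=G'(G^{-1}(y))-F'(F^{-1}(y))\to 0$ as $y\to a_0^-$ should locate $y$ with $\psi(y)=0$, activating the corollary to yield $G''(x')\ge F''(x^{**})$, which combined with symmetric application (or with the explicit Frobenius asymptotics on the right-hand side) contradicts the sign of $h$.

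The main obstacle will be cleanly producing the zero of $\psi$: the corollary's requirement of simultaneous matching in value \emph{and} first derivative is a transcendental condition not afforded by monotonicity alone, and a careful Frobenius-based asymptotic comparison of $G$ and $F_{a_0,a_2}$ as $y\to a_0^-$ is needed. An arguably cleaner route bypassing this intermediate-value step is to upgrade Lemma~\ref{homeom_lmm} to a $C^1$ local diffeomorphism (via the analytic dependence of the Frobenius coefficients on $(a_0,a_2)$ and $\Phi$'s identity Jacobian at $(x_0,a_0,a_2)$), and then, by implicit differentiation of the defining relations $F_{a_0',a_2'}(x^*)=G(x^*)$ and $F_{a_0',a_2'}'(x^*)=G'(x^*)$, to force $\partial_{x^*}(a_0',a_2')\equiv 0$ in the contradictory regime, yielding constancy and thus the identification $(a_0'(x^*),a_2'(x^*))\equiv(a_0,a_2)$ on a left-neighborhood of $x_0$.
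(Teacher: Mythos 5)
Your first paragraph (deducing $(a_0,a_2)\!\in\!W$ from \eref{F0cond_e2} together with the relation $9+2a_2-3c_2=0$ forced by the blow-up of $G'''$) and your endgame (matching $G$ and $G'$ at $x^*$ via $\Phi^{-1}$, then invoking ODE uniqueness at the regular point once the second derivatives also agree) coincide with the paper's argument. The genuine gap is in the step ruling out $h(x^*)\neq0$. Your route --- propagate $F''<G''$ to $x_0$ via Lemma~\ref{F0uniq_lmm}, deduce $a_0'<a_0$ and $a_2'<a_2$, invoke an unproved ``monotone dependence'' of the Frobenius family on its parameters, and then hunt for a simultaneous value-and-derivative match by an intermediate-value argument on $\psi$ --- is not completed, as you yourself note. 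The proposed repair by implicit differentiation does not work either: differentiating $F_{a_0',a_2'}(x^*)=G(x^*)$ and $F_{a_0',a_2'}'(x^*)=G'(x^*)$ in $x^*$ gives $D_AF\cdot\partial_{x^*}(a_0',a_2')=0$ and $D_AF'\cdot\partial_{x^*}(a_0',a_2')=G''-F''=-h$, so in the contradictory regime the derivative of $(a_0',a_2')$ is forced to be \emph{nonzero}, not zero; concluding constancy this way would be circular.

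The missing idea is that Corollary~\ref{F0uniq_crl} is tailored to compare two solutions at two \emph{different} points, and Lemma~\ref{homeom_lmm} hands you exactly the required data without any search. Since $V_{(a_0,a_2)}$ is open in the half-space, for $x'$ slightly larger than $x^*$ the point $(x',G(x^*),G'(x^*))$ still lies in $V_{(a_0,a_2)}$, so there is a Frobenius solution $F_{a_0'',a_2''}$ with $F_{a_0'',a_2''}(x')=G(x^*)$ and $F_{a_0'',a_2''}'(x')=G'(x^*)$; by continuity of $\Phi^{-1}$ and of $(x,a_0,a_2)\longmapsto F_{a_0,a_2}''(x)$, its second derivative at $x'$ is still $<G''(x^*)$ when $h(x^*)<0$. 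Both solutions have third derivative tending to $\i$ at $x_0$, so Corollary~\ref{F0uniq_crl} applies to this pair at $x^*<x'$ and yields $F_{a_0'',a_2''}''(x')\ge G''(x^*)$, an immediate contradiction; the case $h(x^*)>0$ is handled symmetrically with $x'<x^*$ and the roles in the corollary interchanged. No intermediate-value argument and no monotonicity in the parameters is needed.
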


\begin{proof}
By the reasoning in Section~\ref{FI_subs}, the assumptions on~$G$ imply that $(a_0,a_2)\!\in\!W$.
Let~$\Phi$ be as in Lemma~\ref{homeom_lmm}.
By~\eref{homeomcrl_e}, there exist
$x^*\!\in\!\R$ and $(a_0^*,a_2^*)\!\in\!\R^2$ such~that
$$x^*\in(x_0\!-\!\de,x_0),\quad (x^*,a_0^*,a_2^*)\in W_{(a_0,a_2)}, \quad
\big(x^*,G(x^*),G'(x^*)\big)=\Phi(x^*,a_0^*,a_2^*)\,.$$
If $F_{a_0^*,a_2^*}''(x^*)\!<\!G''(x^*)$, there exist
\begin{gather*}
x'\in\!(x^*,x_0), \quad (x',a_0',a_2')\in W_{(a_0,a_2)} \qquad\hbox{s.t.}\\
\Phi(x',a_0',a_2')=\big(x',G(x^*),G'(x^*)\big), \quad
F_{a_0',a_2'}''(x')\!<\!G''(x^*).
\end{gather*}
This contradicts Corollary~\ref{F0uniq_crl}.
If $F_{a_0^*,a_2^*}''(x^*)\!>\!G''(x^*)$,  there exist
\begin{gather*}
x'\in\big(x_0\!-\!\de,x^*\big),  \quad (x',a_0',a_2')\in W_{(a_0,a_2)} \qquad\hbox{s.t.}\\
\Phi(x',a_0',a_2')=\big(x',G(x^*),G'(x^*)\big),
\quad F_{a_0',a_2'}''(x')\!>\!G''(x^*).
\end{gather*}
This contradicts Corollary~\ref{F0uniq_crl} with~$x^*$ and~$x'$ interchanged.\\

\noindent
We conclude~that $F_{a_0^*,a_2^*}''(x^*)\!=\!G''(x^*)$.
Since 
$$9+2F_{a_0^*,a_2^*}'(x^*)-3F_{a_0^*,a_2^*}''(x^*) >0,$$
the uniqueness of solutions of differential equations implies that 
$G\!=\!F_{a_0^*,a_2^*}$ on $[x^*,x_0]$.
By~\eref{homeomcrl_e} and~\eref{Flim_e},  $(a_0^*,a_2^*)\!=\!(a_0,a_2)$.
\end{proof}

\section{Other observations}
\label{misc_sec}

\noindent
We conclude by elaborating two separate points brought up earlier  
in this note.

\subsection{Counts of curves in $\P^3$}
\label{P3g0_subs}

\noindent
We first apply the reasoning of~\cite{FI} described in Section~\ref{FI0_subs} 
to obtain a coarse upper bound for  the numbers of Conjecture~\ref{P3g0_cnj}.\\

\noindent
For  $d\!\in\!\Z^+$ and $p\!\in\!\Z^{\ge0}$, let
$$n_{0,d}(p)=\frac{N_{0,d}(p)}{(2d\!+\!p)!}\,.$$
For $d_1,d_2\!\in\!\Z^+$ and $p_1,p_2\!\in\!\Z^{\ge0}$ with
\BE{P3bnd_e1} p_1\le 2d_1, \qquad p_2\le 2d_2, \qquad
1<p\!\equiv\!p_1\!+\!p_2< 2d\!\equiv\!2(d_1\!+\!d_2),\EE
define
$$f(d_1,d_2,p_1,p_2)
=\frac{(2d_1\!+\!p_1)!(2d_2\!+\!p_2)!}{(2d\!+\!p)!}
d_2\binom{2d\!-\!p\!-\!1}{2d_1\!-\!p_1\!-\!1}
\Bigg(d_1^2\binom{2p\!-\!2}{2p_1}-d_2^2\binom{2p\!-\!2}{2p_2}\Bigg).$$
Since
\begin{equation*}\begin{split}
\frac{(2d_1\!+\!p_1)(2d_2\!+\!p_2)(2d_2\!+\!p_2\!-\!1)}{(2d\!+\!p)(2d\!+\!p\!-\!1)(2d\!+\!p\!-\!2)}
d_1^2d_2\binom{2d\!-\!p\!-\!1}{2d_1\!-\!p_1\!-\!1}
\binom{2p\!-\!2}{2p_1} &\le 8\frac{d_1^3d_2^3}{d^3}\binom{2d\!+\!p\!-\!3}{2d_1\!+\!p_1\!-\!1}\,,\\
\frac{(2d_1\!+\!p_1)(2d_1\!+\!p_1\!-\!1)(2d_1\!+\!p_1\!-\!2)}{(2d\!+\!p)(2d\!+\!p\!-\!1)(2d\!+\!p\!-\!2)}
d_2^3\binom{2d\!-\!p\!-\!1}{2d_1\!-\!p_1\!-\!1}
\binom{2p\!-\!2}{2p_2} &\le 8\frac{d_1^3d_2^3}{d^3}\binom{2d\!+\!p\!-\!3}{2d_1\!+\!p_1\!-\!3}\,,
\end{split}\end{equation*}
we find that   
\BE{P3bnd_e3}\big|f(d_1,d_2,p_1,p_2)\big|\le 8\frac{d_1^3d_2^3}{d^3} \EE
under the assumptions~\eref{P3bnd_e1}.\\

\noindent
By the recursion of \cite[Theorem~10.4]{RT},
\BE{P3bnd_e5a}\begin{split}
n_{0,d}(0)&=\sum_{\begin{subarray}{c}d_1+d_2=d\\ d_1,d_2\ge1\end{subarray}}
\!\!\!\!\!
\frac{(d_2\!-\!d_1)d_1^2d_2(2d_2\!+\!1)}{d(d\!-\!1)(2d\!-\!1)}n_{0,d_1}(0)n_{0,d_2}(1),\\
n_{0,d}(2d)&=\frac12n_{0,d}(2d\!-\!1)+\!\!
\sum_{\begin{subarray}{c}d_1+d_2=d\\ d_1,d_2\ge1\end{subarray}}\!\!\!\!\!
\frac{d_1d_2(4dd_1d_2\!-\!d^2\!+\!2d_1d_2)}{2d(2d\!-\!1)(4d\!-\!1)}
n_{0,d_1}(2d_1)n_{0,d_2}(2d_2)
\end{split}\EE
for $d\!\ge\!2$ and $d\!\ge\!1$, respectively.
For $d\!\ge\!1$ and $1\!\le\!p\!\le\!2d\!-\!1$, this recursion 
with $j_1\!=\!3$ and $j_2\!=\!j_3\!=\!2$ gives
\BE{P3bnd_e5c}
n_{0,d}(p)=\frac{d}{2d\!+\!p} n_{0,d}(p\!-\!1)
+\!\sum_{\begin{subarray}{c}d_1+d_2=d\\ d_1,d_2\ge1\end{subarray}}
\sum_{\begin{subarray}{c}p_1+p_2=p\\ 0\le p_i\le 2d_i\end{subarray}}\!\!\!\!\!\!
f(d_1,d_2,p_1,p_2)\,n_{0,d_1}(p_1)n_{0,d_2}(p_2)\,;\EE
the assumption $j_i\!\ge\!j_{i+1}$ in~\cite{RT} is not essential.
For $p\!=\!1$, \eref{P3bnd_e5c} simplifies to
\BE{P3bnd_e5d}
n_{0,d}(1)=\frac{d}{2d\!+\!1} n_{0,d}(0)+
\sum_{\begin{subarray}{c}d_1+d_2=d\\ d_1,d_2\ge1\end{subarray}}\!
\frac{d_1^3d_2(2d_2\!+\!1)}{d(2d\!-\!1)(2d\!+\!1)}n_{0,d_1}(0)n_{0,d_2}(1)\,.\EE

\vspace{.2in}

\noindent
Let $\wt{n}_1(0)\!=\!1/2$ and define
\begin{equation*}\begin{aligned}
\wt{n}_d(0)&= 4\!\!\!\sum_{\begin{subarray}{c}d_1+d_2=d\\ d_1,d_2\ge1\end{subarray}}
\!\!\!\frac{d_1^3d_2^3}{d^3}\wt{n}_{d_1}(0)\wt{n}_{d_2}(1)
&\quad\forall~&d\!\ge\!2,\\
\wt{n}_d(1)&=\frac{1}{2} \wt{n}_d(0)+
\sum_{\begin{subarray}{c}d_1+d_2=d\\ d_1,d_2\ge1\end{subarray}}\!\!\!
\frac{d_1^3d_2^3}{d^3}\wt{n}_{d_1}(0)\wt{n}_{d_2}(1)
&\quad\forall~&d\!\ge\!1,\\
\wt{n}_d(p)&=\frac{1}{2} \wt{n}_d(p\!-\!1)
+8\!\!\!\!\sum_{\begin{subarray}{c}d_1+d_2=d\\ d_1,d_2\ge1\end{subarray}}
\sum_{\begin{subarray}{c}p_1+p_2=p\\ 0\le p_i\le 2d_i\end{subarray}}\!\!\!\!\!\!
\frac{d_1^3d_2^3}{d^3}\wt{n}_{d_1}(p_1)\wt{n}_{d_2}(p_2)&\quad\forall~&
d\!\ge\!1,~p\!\ge\!2\,.
\end{aligned}\end{equation*}
We also define $\wt{n}_1'(0)\!=\!1/2$ and 
\BE{P3bnd_e9}\begin{aligned}
\wt{n}_d'(0)&= 8\!\!\!\sum_{\begin{subarray}{c}d_1+d_2=d\\ d_1,d_2\ge1\end{subarray}}
\!\!\!\wt{n}_{d_1}'(0)\wt{n}_{d_2}'(0)
&\quad\forall~&d\!\ge\!2,\\
\wt{n}_d'(p)&=\frac{1}{2} \wt{n}_d'(p\!-\!1)
+8\!\!\!\!\sum_{\begin{subarray}{c}d_1+d_2=d\\ d_1,d_2\ge1\end{subarray}}
\sum_{\begin{subarray}{c}p_1+p_2=p\\ 0\le p_i\le 2d_i\end{subarray}}\!\!\!\!\!\!
\wt{n}_{d_1}'(p_1)\wt{n}_{d_2}'(p_2)&\quad\forall~&
d\!\ge\!1,~p\!\ge\!1\,.
\end{aligned}\EE
By \eref{P3bnd_e3}-\eref{P3bnd_e5d}, 
\BE{P3bnd_e7} n_{0,d}(p)\le \wt{n}_d(p)\le \wt{n}_d'(p)\big/d^3 
\qquad\forall~d\!\in\!\Z^+,\,p\!\in\!\Z^{\ge0}.\EE

\vspace{.2in}

\noindent
By~\eref{P3bnd_e9}, the power series 
$$f(x)\equiv \sum_{d=1}^{\i}\ti{n}_d'(0)x^d \qquad\hbox{and}\qquad
g(x,y)\equiv \sum_{d=1}^{\i}\sum_{p=0}^{\i}\ti{n}_d'(p)x^dy^p$$
satisfy
$$f(x)=\frac{1}{2}x+8f(x)^2,\qquad
g(x,y)-f(x)=\frac12yg(x,y)+8\big(g(x,y)^2-f(x)^2\big).$$
Combining these two equations, we obtain
$$8g(x,y)^2-\big(1\!-\!y/2\big)g(x,y)+\frac12x=0.$$
Solving the last equation, we find~that 
\begin{equation*}\begin{split}
16g(x,y)&=(1\!-\!y/2)-\sqrt{(1\!-\!y/2)^2-32x}\Big)
=\big(1\!-\!y/2\big)\Big(1-\sqrt{1-16x/(1\!-\!y/2)^2}\Big)\\
&=2\big(1\!-\!y/2\big)
\sum_{d=1}^{\i}\frac{(2d\!-\!2)!}{d!(d\!-\!1)!}\bigg(\frac{4x}{(1\!-\!y/2)^2}\bigg)^d\\
&=2\big(1\!-\!y/2\big)
\sum_{d=1}^{\i}\sum_{p=0}^{\i}\frac{(2d\!-\!2)!}{d!(d\!-\!1)!}
4^d2^{-p}\binom{2d\!-1\!+\!p}{p}x^dy^p\,.\\
\end{split}\end{equation*}
Since the binomial coefficient above increases with~$p$, 
$$\ti{n}_d'(p)\le \frac{1}{8d}\binom{2d\!-\!2}{d\!-\!1}
4^d \binom{4d\!-\!1}{2d}
\le \frac{1}{8d}2^{2d-2}2^{2d}2^{4d-1}
\le \frac{1}{d} 2^{8d} \qquad\forall\,p\!\le\!2d.$$
Combining with~\eref{P3bnd_e7}, we conclude that 
\BE{g0P3bnd_e} n_{0,d}(p) \le 2^{8d}d^{-4}
\qquad\forall~d,p\!\in\!\Z^+,\,p\!\in\!\Z^{\ge0}\,.\EE
This is an analogue (and a very rough one) of the upper bound in~\eref{P2asymp_e0}.
A lower bound for the sequences of Conjecture~\ref{P3g0_cnj} is more elusive
because the recursions~\eref{P3bnd_e5a} and~\eref{P3bnd_e5c} involve negative coefficients.

\subsection{On recursively defined sequences}
\label{recseq_subs}

\noindent
As indicated above Corollary~\ref{F1_crl} in Section~\ref{FI_subs}, 
the asymptotics for Gromov-Witten invariants in some basic cases 
may reflect the behavior of more general recursively defined 
sequences of the form~\eref{recdfnseq_e}.

\begin{cnj}\label{asympgrowth_cnj}
Suppose $f\!\in\!\Q(d_1,d_2)$ is a rational function defined on $\Z^+\!\times\!\Z^+$
and satisfying
$$f(d_1,d_2)\!>\!0~~\forall\,d_1,d_2\!\in\!\Z^+, \qquad
\lim_{d_1,d_2\lra\i} \Bigg(f(d_1,d_2)\bigg/\bigg(\frac{d_1d_2}{d}\bigg)^k\Bigg)\in\R^+$$
for some $k\!\in\!\R^+$. 
If $n_d$ is a sequence recursively defined by~\eref{recdfnseq_e} and $n_1\!>\!0$,
then the sequence~$\sqrt[d]{n_d}$ is eventually increasing, 
i.e.~there exists $d^*\!\in\!\Z^+$ such that 
\BE{asympgrowth_e}\sqrt[d]{n_d}\le\sqrt[d+1]{n_{d+1}} \qquad\forall d\ge d^*.\EE
\end{cnj}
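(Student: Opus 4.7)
The plan is to establish, from the hypotheses on $f$, the asymptotic
$$n_d=Cb^dd^{-k-3/2}\big(1+o(1)\big)\qquad\hbox{as}~d\lra\i$$
for some $b,C\!\in\!\R^+$, and then to deduce~\eref{asympgrowth_e} from the shape of this asymptotic. The $\P^2$ genus-$0$ case treated in Corollary~\ref{FIbnd_crl} and Theorem~\ref{TW_thm} is a model for what the general argument should look~like.

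First I would show that the exponential rate $b\!:=\!\lim_{d\lra\i}\!\sqrt[d]{n_d}$ exists in $(0,\i)$. Let $c\!\in\!\R^+$ be the limit appearing in the growth hypothesis. For each $\ep\!>\!0$, there exists $D\!\in\!\Z^+$ so that $(c\!-\!\ep)(d_1d_2/d)^k\!\le\!f(d_1,d_2)\!\le\!(c\!+\!\ep)(d_1d_2/d)^k$ whenever $d_1,d_2\!\ge\!D$. Handling the initial block $d\!\le\!2D$ separately (e.g.~by absorbing it into a single large constant in the initial value), one sandwiches $n_d$ between sequences $n_d^{\pm}$ satisfying the multiplicative recursion of Lemma~\ref{rec_lmm} with $a\!=\!c\!\pm\!\ep$ and $f(d)\!=\!d^k$. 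Lemma~\ref{rec_lmm} combined with Stirling's formula~\eref{Stirling_e} then gives $n_d^{\pm}\!\sim\!C_{\pm}b_{\pm}^d\,d^{-k-3/2}$ with $b_{\pm}\!\lra\!b_*$ as $\ep\!\lra\!0$, forcing $\sqrt[d]{n_d}\!\lra\!b\!:=\!b_*$.

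The main obstacle is the second step: promoting the sandwich bounds to a genuine asymptotic $n_d\!\sim\!Cb^dd^{-k-3/2}$ with a single leading constant~$C$. The natural tool is singularity analysis of the generating series $F(z)\!=\!\sum n_dz^d$. In the special case of Lemma~\ref{rec_lmm} the series~$F$ is algebraic with a square-root singularity at $z\!=\!1/b$, producing the $d^{-3/2}$ factor. For general $f\!\in\!\Q(d_1,d_2)$ with the prescribed homogeneous leading behavior, the recursion~\eref{recdfnseq_e} translates into a polynomial ODE for~$F$; one would then try to extract a singular expansion at $z\!=\!1/b$ by a Frobenius-type argument paralleling Proposition~\ref{FIexp_prp} and Section~\ref{expand_subs}, combined with an analogue of Lemma~\ref{TW_lmm0} ruling out further singularities on the circle $|z|\!=\!1/b$ and with the contour-integral extraction of Lemma~\ref{TW_lmm}. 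The difficulty is that for arbitrary rational $f$ the resulting ODE need not be of a standard type, and some combinatorial or structural input on the subleading terms of~$f$ may well be needed to get the correct local solution near the dominant singularity.

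Granted the asymptotic, the third step is a direct check. The inequality $\sqrt[d+1]{n_{d+1}}\!\ge\!\sqrt[d]{n_d}$ is equivalent to $(\ln n_d)/d$ being eventually non-decreasing, and from the asymptotic
$$\frac{\ln n_d}{d}=\ln b+\frac{\ln C}{d}-\al\frac{\ln d}{d}+o(1/d),\qquad \al\!:=\!k\!+\!3/2\!>\!0.$$
Viewed as a function of continuous~$d$, the derivative of $-\al(\ln d)/d$ equals $\al(\ln d\!-\!1)/d^2$, which is positive for $d\!>\!e$; the discrete difference is of the same size and sign and easily dominates the $O(1/d^2)$ contribution from the other terms, so $(\ln n_d)/d$ is eventually increasing. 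Thus the whole content of Conjecture~\ref{asympgrowth_cnj} is concentrated in Step~2: its resolution in full generality would require a counterpart of the Proposition~\ref{FIexp_prp}-Lemma~\ref{TW_lmm0} singularity picture that is robust under arbitrary $f\!\in\!\Q(d_1,d_2)$ with the stated leading behavior.
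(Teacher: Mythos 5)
First, note that the statement you are addressing is stated in the paper as a \emph{conjecture}: the paper proves it only in the model cases $f(d_1,d_2)=a\,d_1^kd_2^k/d^k$, where Lemma~\ref{rec_lmm} yields the exact closed form $n_d=\frac{(2d-2)!}{d!(d-1)!}\frac{a^{d-1}}{d^k}n_1^d$ and the eventual monotonicity of $\sqrt[d]{n_d}$ is checked by comparing $\sum\ln r$ with $\int\ln x\,\d x$; the paper explicitly leaves open how to reduce the general case to the model one. Your proposal is a program for the general case built on singularity analysis of the generating series (in the spirit of Proposition~\ref{FIexp_prp}, Lemma~\ref{TW_lmm0}, and Lemma~\ref{TW_lmm}), and you correctly flag that its core, Step~2, is unresolved. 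So, like the paper, you have not proved the conjecture; unlike the paper, you do not actually carry out even the model case, where the exact formula makes all three of your steps unnecessary.

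Beyond the acknowledged gap in Step~2, two of your steps have concrete problems. Step~1 does not establish the existence of $b=\lim_{d\lra\i}\sqrt[d]{n_d}$. To sandwich $n_d$ between solutions $n_d^{\pm}$ of the model recursion you must (i)~control $f(d_1,d_2)$ also when one of $d_1,d_2$ stays below $D(\ep)$, which the two-variable limit hypothesis does not give, and (ii)~enlarge/shrink the initial data of $n_d^{\pm}$ on the block $d\le 2D(\ep)$; by Lemma~\ref{rec_lmm} this rescales the exponential base of $n_d^{\pm}$ by the adjustment factor, and since $D(\ep)\lra\i$ as $\ep\lra0$ the bases $b_{\pm}(\ep)$ need not converge to a common value. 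This is exactly the issue the paper labels~\ref{F0impl_it}: two-sided essentially geometric bounds with different bases, as in~\eref{P2asymp_e0}, bound $\liminf$ and $\limsup$ separately but do not produce a limit (cf.\ the $n_{0,d}'$ example in Section~\ref{FI_subs}). A Fekete-type approximate-superadditivity argument applied to $\ln n_d$ would give the limit here, but that is not what you wrote. Step~3 also needs more than $n_d=Cb^dd^{-\al}(1+o(1))$: writing $n_d=Cb^dd^{-\al}(1+\ep_d)$, the contribution of the error to $\frac{\ln n_{d+1}}{d+1}-\frac{\ln n_d}{d}$ is of size $(|\ep_d|+|\ep_{d+1}|)/d$, which overwhelms the favorable term $\al\ln d/d^2$ unless $\ep_d=O(\ln d/d)$; with only $\ep_d=o(1)$ (e.g.\ $\ep_d=(-1)^d/\ln d$) the claimed monotonicity can fail. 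Thus Step~2 must deliver a full expansion with relative error $O(1/d)$, as Theorem~\ref{TW_thm} does for $\P^2$, not merely a leading-order asymptotic, before Step~3 can be run.
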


\vspace{.1in}

\noindent
The conjectural dependence of the asymptotic behavior of~$n_d$ 
only on the asymptotic behavior
of $f(d_1,d_2)$ may be related to the following property.
Let $p(q)\in q\R[q]$ be a polynomial with {\it positive} coefficients and vanishing constant term.
Define the numbers $n_d$ by 
$$\sum_{d=1}^{\i}n_dq^d=\sum_{d=1}^{\i}
\frac{(2d\!-\!2)!}{d!(d\!-\!1)!}
\frac{1}{ad^k}q^d\big(1+p(q)\big)^d\,.$$
It appears that the numbers $\sqrt[d]{n_d}$ are eventually increasing.
In other words, this property is invariant under the change of variables
$$q\lra \big(1+p(q)\big)q$$
if $p(q)$ is a polynomial with {\it positive} coefficients and vanishing constant term.
For the asymptotic behavior conclusion, $p(q)$ would perhaps need to be a power
series with coefficients declining sufficiently quickly.\\

\noindent
We now confirm Conjecture~\ref{asympgrowth_cnj} in the model cases, i.e.~for
$$f(d_1,d_2)=a\frac{d_1^kd_2^k}{d^k}
\qquad\hbox{with}\quad a\!\in\!\R^+,~k\in\R^{\ge0}\,.$$
The crucial problem is how to reduce more general cases of this conjecture to
the model ones.\\

\noindent
By Lemma~\ref{rec_lmm}, 
$$n_d=\frac{(2d\!-\!2)!}{d!(d\!-\!1)!}
\frac{1}{ad^k}\big(n_1a\big)^d\qquad\forall~d\!\in\!\Z^+\,.$$
Thus, the eventually increasing property in this case is equivalent to
the existence of \hbox{$d^*\!\in\!\Z^+$} such~that 
$$\frac{\sum\limits_{r=d+1}^{2d-2}\!\!\!\ln r-\sum\limits_{r=1}^{d-1}\!\ln r-\ln a-k\ln d}{d}
<\frac{\sum\limits_{r=d+2}^{2d}\!\!\!\!\ln r-\sum\limits_{r=1}^{d}\!\ln r-\ln a-k\ln (d\!+\!1)}{d+1}
\qquad\forall~d\ge d^*.$$
This is equivalent to
\BE{lnbnd_e1}d\ln(d\!+\!1)+\sum\limits_{r=d+1}^{2d-2}\!\!\!\ln r-
\sum\limits_{r=2}^{d-1}\ln r-\ln a
+k\big(d\ln(1+\frac1d)  -\ln d\big)<d\ln(2d-1)+d\ln2.\EE
Since $\ln x$ is an increasing function,
\begin{equation*}\begin{split}
\sum_{r=d+1}^{2d-2}\!\!\!\ln r&< 
\int_{d+1}^{2d-1}\!\!\!\ln x\,\d x =(x\ln x-x)\Big|_{d+1}^{2d-1}
=(2d\!-\!1)\ln(2d\!-\!1)-(d\!+\!1)\ln(d\!+\!1)-(d\!-\!2),\\
\sum_{r=2}^{d-1}\ln r&>
\int_{1}^{d-1}\!\!\!\ln x\,\d x=(x\ln x-x)\Big|_1^{d-1}
=(d\!-\!1)\ln(d\!-\!1)-(d\!-\!2).
\end{split}\end{equation*}
Thus, the left-hand side of \eref{lnbnd_e1} is bounded by
\begin{equation*}\begin{split}
&(2d\!-\!1)\ln(2d\!-\!1)-(d\!-\!1)\ln(d\!-\!1)-\ln(d\!+\!1)-\ln a
-k\big(\ln d-1\big)\\
&\qquad \le d\ln(2d\!-\!1)+(d\!-\!1)\ln2+(d\!-\!1)\ln\bigg(1+\frac1{2(d\!-\!1)}\bigg)
-\ln(d\!+\!1)-\ln a\\
&\qquad \le d\ln(2d\!-\!1)+(d\!-\!1)\ln2+\frac12-\ln(d\!+\!1)-\ln a
\le d\ln(2d\!-\!1)+d\ln2-\ln(d\!+\!1)-\ln a.
\end{split}\end{equation*}
For $d$ sufficiently large, the combination of the last two terms above is negative,
which establishes~\eref{asympgrowth_e} in this case.\\

\vspace{.2in}

\noindent
{\it Department of Mathematics, Stony Brook University, Stony Brook, NY 11794\\
azinger@math.stonybrook.edu}

\end{document}